\pdfoutput=1
\documentclass[11pt]{article}
\usepackage[left=1in,right=1in,top=1in,bottom=1in]{geometry}
\usepackage{times}
\usepackage{expl3}
\usepackage{cite}
\usepackage[table]{xcolor}
\usepackage{multirow}
\usepackage{stackengine} 
\usepackage{hhline}
\usepackage{lipsum}
\usepackage{titlesec}
\usepackage[all]{xy}
\usepackage{wrapfig}
\usepackage{enumerate}
\usepackage{epsfig}
\usepackage{tikz-cd}
\usepackage{amsmath}
\usepackage{tabularx}
\usepackage{array}
\usepackage{booktabs}
\usepackage{enumitem}
\usepackage{bbm}
\usepackage{calc}
\usepackage{graphicx}
\usepackage{amsmath}
\usepackage[title]{appendix}
\usepackage{amssymb}
\usepackage{epstopdf}
\usepackage{boldline}
\usepackage{arydshln}
\usepackage{calligra}
\usepackage{bm}
\usepackage{url}
\usepackage{blindtext}
\usepackage{accents}
\usepackage{amsthm}
\usepackage{amscd}

\newtheorem{definition}{Definition}

\newtheorem{theorem}{Theorem}
\newtheorem{proposition}{Proposition}
\newtheorem{corollary}{Corollary}

\newtheorem{assumption}{Assumption}
\newtheorem{example}{Example}
\newtheorem{remark}{Remark}
\usepackage{mathtools}
\usepackage{epstopdf}
\usepackage{balance}
\usepackage{thmtools}
\usepackage{thm-restate}
\usepackage{hyperref}
\usepackage{cleveref}
\usepackage[mathscr]{euscript}

\usepackage[ruled,vlined]{algorithm2e}
\newcommand{\ostar}{\mathbin{\mathpalette\make@circled\star}}

\makeatletter
\newcommand{\removelatexerror}{\let\@latex@error\@gobble}
\makeatother
\makeatletter
\newcommand*{\rom}[1]{\expandafter\@slowromancap\romannumeral #1@}
\makeatother

\ExplSyntaxOn
\newcommand\latinabbrev[1]{
  \peek_meaning:NTF . {
    #1\@}%
  { \peek_catcode:NTF a {
      #1.\@ }%
    {#1.\@}}}
\ExplSyntaxOff

\titleclass{\subsubsubsection}{straight}[\subsubsection]

\begin{document}
\vspace{1cm}
\title{Interaction Residues and Localized Spectral Defects in Stratified Operadic Systems}
\vspace{1.8cm}
\author{Shih-Yu~Chang
\thanks{Shih-Yu Chang is with the Department of Applied Data Science,
San Jose State University, San Jose, CA, U. S. A. (e-mail: {\tt
shihyu.chang@sjsu.edu})
}}

\maketitle

\begin{abstract}
We develop a framework for studying how global spectral structure emerges from interacting local sectors in stratified operadic systems. The central object is the interaction residue, which measures the failure of exact spectral decomposition across interfaces. Under suitable localization assumptions, the global spectrum decomposes into local spectral sectors together with interface-generated residue contributions. The theory introduces a classification of spectral defects based on interface geometry and algebraic structure. Point interfaces produce isolated spectral contributions; line and surface interfaces produce extended spectral regimes. Non-semisimple operator structure generates nilpotent defects associated with Jordan blocks and generalized eigenspaces, yielding a two-dimensional defect taxonomy combining geometric localization with Jordan complexity. Several structural results are established, including interface localization, rigidity and vanishing criteria, refinement functoriality, and deformation stability. Under a local triviality condition, the residue is homotopy invariant, preserving its homology and Betti numbers throughout admissible deformations. The framework is illustrated through explicit operator and block matrix examples demonstrating how localized interactions generate localized spectral defects. The nilpotent sector connects the theory with classical operator theory via generalized eigenvectors, functional calculus, and perturbative Jordan splitting. Overall, the framework provides a unified viewpoint for understanding how interface interactions and non-semisimple structure influence global spectral behavior.
\end{abstract}

\tableofcontents

\section{Introduction}
\label{sec:introduction}

\subsection{From Spectral Decomposition to Defect Geometry}
\label{subsec:spectral-decomposition-to-defect}

Classical spectral theory is fundamentally based on decomposition principles.
Given an operator, geometric object, or algebraic system, one seeks to
decompose the global structure into simpler local spectral components.
This philosophy appears throughout mathematics, including spectral
decomposition in operator theory~\cite{Kato1995}, local coordinate systems in differential
geometry, stratifications in singular geometry, sheaf decompositions in
algebraic topology, and local quantum sectors in mathematical physics.

\begin{example}[Failure of local-to-global spectral decomposition]
\label{ex:simple-failure}
Consider the $2\times 2$ matrix
\[
H_\varepsilon = \begin{pmatrix} 1 & \varepsilon \\ \varepsilon & 2 \end{pmatrix},
\qquad \varepsilon \in \mathbb{R}.
\]
The \emph{local spectra} (the diagonal entries, viewed as two isolated strata) are $\sigma_{\mathrm{loc}}^1 = \{1\}$, $\sigma_{\mathrm{loc}}^2 = \{2\}$. The \emph{global spectrum} is
\[
\sigma(H_\varepsilon) = \left\{ \frac{3 \pm \sqrt{1 + 4\varepsilon^2}}{2} \right\}.
\]
For $\varepsilon = 0$, $\sigma(H_0) = \{1,2\} = \sigma_{\mathrm{loc}}^1 \cup \sigma_{\mathrm{loc}}^2$. For $\varepsilon \neq 0$, the eigenvalues shift:
\[
\sigma(H_\varepsilon) = \{1,2\} \;\cup\; \Sigma^{\mathrm{res}},\qquad
\Sigma^{\mathrm{res}} = \left\{ \frac{3 + \sqrt{1+4\varepsilon^2}}{2},\; \frac{3 - \sqrt{1+4\varepsilon^2}}{2} \right\} \setminus \{1,2\}.
\]
The \emph{interaction residue} $\Sigma^{\mathrm{res}}$ measures precisely the spectral shift caused by the off-diagonal coupling $\varepsilon$. This simple example illustrates the central problem of this paper: the global spectrum is not simply the union of local spectra when interactions are present.
\end{example}

However, modern geometry repeatedly demonstrates that the most interesting
phenomena often arise not from the local pieces themselves, but rather from
the failure of these pieces to glue together perfectly.
Curvature measures the obstruction to local flatness in differential geometry,
wavefront sets describe singular propagation in microlocal analysis,
and intersection homology measures the failure of ordinary homology on
singular spaces.
Similarly, interfaces and defects in topological field theory generate
global phases invisible from isolated local sectors alone.

The central thesis of the current work is that an analogous phenomenon
occurs in operadic spectral geometry.
When a stratified operadic system undergoes operadic base change,
the global operadic spectrum generally fails to decompose exactly into
the union of local strata spectra.
This obstruction is not accidental but instead defines a canonical
geometric object that we denote by
$\Sigma^{\mathrm{res}}$ and interpret as an
\emph{interaction residue} or
\emph{spectral defect geometry}.

Thus, the global spectral geometry takes the form
\[
\boxed{
\text{global geometry}
=
\text{local geometry}
+
\text{interaction residue}.
}
\]

In this framework, local strata generate semisimple spectral sectors,
interfaces generate localized interaction defects,
and the residue $\Sigma^{\mathrm{res}}$
records the resulting global obstruction structure.
Consequently, the focus of spectral geometry shifts from isolated spectra
alone toward the geometry of interaction, localization, and obstruction.

\subsection{Relation to SOC I and SOC II}
\label{subsec:relation-soci-socii}

The current work constitutes the third component of the
Spectral Operadic Calculus (SOC) program.

\begin{itemize}
    \item \textbf{SOC I~\cite{ChangSOC1}}
    established the algebraic foundation through projector--nilpotent
    decomposition and a unified functional calculus.
    The central insight was that classical resolvent theory captures only
    semisimple spectral information, while nonnormal systems require explicit
    nilpotent structure.

    \item \textbf{SOC II~\cite{ChangSOC2}}
    extended the theory to operadic base change and functorial spectral
    geometry, interpreting operadic spectra through categorical transport
    and functorial deformation.

    \item \textbf{The current work}
    introduces \emph{defect geometry}, namely the failure of exact spectral
    decomposition under stratified interaction.
\end{itemize}

The three papers form a unified conceptual hierarchy:
\[
\boxed{
\text{SOC I (algebraic)}
\Longrightarrow
\text{SOC II (functorial)}
\Longrightarrow
\text{The current work (geometric)}.
}
\]

From this perspective, the current work represents the transition from algebraic
spectral structure toward geometric interaction theory.

\begin{remark}
\label{rem:soc-notation}
In this paper, SOC I refers to our previous work~\cite{ChangSOC1} and
SOC II refers to our previous work~\cite{ChangSOC2}.
\end{remark}

\subsection{The Central Problem: Failure of Local-to-Global Spectral Factorization}
\label{subsec:central-problem}

Example~\ref{ex:simple-failure} reveals a universal phenomenon: when a system is composed of interacting subsystems, the global spectrum generally differs from the union of local spectra. The missing spectral contribution — which we denote by $\Sigma^{\mathrm{res}}$ and call the \emph{interaction residue} — encodes the geometry of inter-subsystem coupling.

\medskip
\noindent\textbf{Guiding principle.}
For any stratified operadic system, we have the decomposition
\[
\boxed{\sigma_{\mathrm{global}} = \bigcup_{\text{strata}} \sigma_{\mathrm{local}} \;\cup\; \Sigma^{\mathrm{res}}},
\]
where $\Sigma^{\mathrm{res}}$ localizes on interfaces between strata and includes both geometric and nilpotent (Jordan) contributions.

This paper develops the complete theory of $\Sigma^{\mathrm{res}}$: its definition, localization, classification, functoriality, deformation stability, and explicit computation in concrete examples.

\subsection{Main Contributions and Outline}
\label{subsec:main-contributions-outline}

The principal contributions of this paper are summarized as follows.

\begin{enumerate}
    \item
    \textbf{Stratified Base Change Decomposition}
    (Theorem~\ref{thm:stratified-base-change}):
    we establish the decomposition
    \[
    \sigma(F_*(A))
    =
    \bigcup_S \sigma(F_S(A_S))
    \cup
    \Sigma^{\mathrm{res}}.
    \]

    \item
    \textbf{Local-to-Global Reconstruction}
    (Theorem~\ref{thm:local-to-global-reconstruction}):
    the global spectrum is uniquely determined by local spectra together
    with the interaction residue.

    \item
    \textbf{Universality of the Residue Set}
    (Theorem~\ref{thm:universality-residue}):
    $\Sigma^{\mathrm{res}}$
    is shown to be the minimal universal obstruction to exact spectral
    factorization.

    \item
    \textbf{Interface Localization}
    (Theorem~\ref{thm:interface-localization}):
    the residue decomposes as
    \[
    \Sigma^{\mathrm{res}}
    =
    \bigoplus_I \mathcal{L}_I,
    \]
    with each localized component supported on an interface.

    \item
    \textbf{Spectral Defect Classification}
    (Theorem~\ref{thm:defect-classification}):
    defects are classified according to interface geometry and nilpotent depth.

    \item
    \textbf{Refinement Functoriality}
    (Theorem~\ref{thm:refinement-functoriality}):
    the residue is invariant under stratification refinement.

    \item
    \textbf{Deformation Stability}
    (Theorem~\ref{thm:deformation-stability}):
    the residue varies continuously under deformation and defines
    deformation invariants.
\end{enumerate}

\medskip
\subsection{Central role of nilpotent structure}
\label{subsec:nilpotent-central-role}

A central feature of the present framework is that nilpotent structure
is not treated as a secondary algebraic correction, but rather as a
fundamental source of spectral defect formation.

In classical spectral theory~\cite{Kato1995}, diagonalizable operators are largely
governed by eigenvalue localization and spectral decomposition.
However, in many interacting or singular systems, non-semisimple
behavior becomes unavoidable. In such regimes, Jordan blocks and
generalized eigenspaces generate additional spectral phenomena that
cannot be captured solely by eigenvalues or local spectral sectors.
The interplay between semisimple and nilpotent parts is elegantly
captured by the Jordan--Chevalley decomposition, which plays a central
role in noncommutative geometry~\cite{Connes1994} and the
K-theoretic classification of operator algebras~\cite{Blackadar1998}.

Within the present framework, nilpotent contributions appear naturally
through interaction residues associated with nontrivial coupling
structures. These contributions generate what we call
\emph{nilpotent defects}, namely localized spectral phenomena arising
from generalized eigenvectors, Jordan chains, and perturbative
eigenvalue coalescence.

\medskip
\noindent\textbf{Connection to resolvent poles and perturbative splitting.}
This viewpoint is motivated by several well-known mechanisms in
operator theory and mathematical physics~\cite{Kato1995}. Near defective spectral
points, the resolvent \((zI - \tau_I)^{-1}\) develops higher-order poles
whose structure is governed by nilpotent Jordan components rather than
ordinary eigenspaces alone. For a Jordan block of size \(m\), the
resolvent exhibits poles of order up to \(m\), directly affecting the
spectral decomposition and the growth of the semigroup \(e^{t\tau_I}\).

Moreover, a Jordan block is spectrally fragile: an arbitrarily small
generic perturbation splits it into \(m\) distinct eigenvalues.
For a \(2 \times 2\) Jordan block \(\begin{pmatrix} \lambda_0 & 1 \\ 0 & \lambda_0 \end{pmatrix}\),
adding \(\varepsilon\) in the \((2,1)\) entry yields eigenvalues
\(\lambda_0 \pm \sqrt{\varepsilon}\). For size \(m\), the splitting
follows a Puiseux series \(\lambda_k = \lambda_0 + \omega_k \varepsilon^{1/m}\),
where \(\omega_k\) are the \(m\)th roots of unity. Such
\emph{exceptional points} are of intense interest in non-Hermitian
physics and signal a phase transition in the spectral defect geometry.
Theorem~\ref{thm:deformation-stability} captures this phenomenon: at the
nilpotent defect (\(\varepsilon = 0\)), the residue has algebraic
multiplicity \(m\) but geometric multiplicity \(1\); for \(\varepsilon > 0\),
it splits into \(m\) point defects.

\medskip
\noindent\textbf{Two-dimensional taxonomy.}
From the perspective of the present theory, localized spectral defects
contain two interacting components:
\begin{itemize}
    \item \textbf{Geometric localization} generated by interface structure (dimension, topology, singularities),
    \item \textbf{Algebraic localization} generated by nilpotent operator structure (Jordan block size, nilpotency depth).
\end{itemize}
This motivates the two-dimensional defect taxonomy developed later in
the paper (Theorem~\ref{thm:defect-classification}), where interface
geometry and Jordan complexity jointly govern the structure of spectral
defects. Point, line, and surface defects may be semisimple (\(m = 1\))
or nilpotent (\(m \ge 2\)). Nilpotent defects of higher depth produce
increasingly sensitive spectral behavior under perturbation, higher-order
resolvent poles, and richer functional calculus corrections.

Consequently, the interaction residue
\[
\Sigma^{\mathrm{res}}
\]
should not be viewed merely as an eigenvalue correction term. Rather,
it encodes hidden generalized spectral structure generated by
interaction-induced nilpotent effects, perturbative splitting, and
non-semisimple operadic coupling. The nilpotent sector is therefore
not a marginal refinement but a core invariant that connects the
present framework to classical operator theory~\cite{Kato1995}, perturbation theory,
and the physics of exceptional points.

\medskip
\subsection{Organization of the Paper}

The remainder of this paper is organized as follows.
Section~\ref{sec:operadic-stratified-geometry}
develops the framework of stratified operadic geometry and local spectral
structures.
Section~\ref{sec:stratified-base-change-residue}
introduces the global operadic spectrum and the interaction residue under
stratified base change.
Section~\ref{sec:main-theorems}
presents the principal structural theorems of the current work.
Section~\ref{sec:rigidity-vanishing-regime}
studies the rigidity regime
$\Sigma^{\mathrm{res}} = \emptyset$
under which exact spectral factorization is recovered.
Section~\ref{sec:geometric-interpretations}
explores geometric and physical interpretations of the residue.
Section~\ref{sec:examples}
provides explicit examples of localized spectral defect geometries.

\begin{remark}
SOC I indicates our previous work~\cite{ChangSOC1} in this paper.
SOC II indicates our previous work~\cite{ChangSOC2} in this paper.
\end{remark}

\section{Operadic Stratified Geometry}
\label{sec:operadic-stratified-geometry}

Before developing the defect geometry of spectral operadic calculus, we must establish the foundational framework within which spectral defects live. This section introduces the notion of \emph{stratified operadic structures} — operads whose color sets are partitioned into \emph{strata} — and develops the basic geometric and spectral data associated with such structures.

The key insight driving this section is that spectral behavior is not uniform across all colors. Instead, different groups of colors (strata) may exhibit qualitatively different spectral phenomena, and the interactions between these strata generate the spectral defects that are the subject of this paper. By stratifying the color set, we can:
\begin{enumerate}
    \item Identify \emph{local spectral sectors} — the spectrum of each stratum considered in isolation;
    \item Characterize \emph{interaction interfaces} — the loci where different strata interact via operadic composition;
    \item Prepare the stage for the \emph{residue geometry} developed in subsequent sections.
\end{enumerate}

The definitions in this section extend the operadic and spectral frameworks
developed in our previous works~\cite{ChangSOC1,ChangSOC2}.
They formalize the notion of spectral interaction between coupled subsystems
within a stratified operadic setting, providing a rigorous foundation for
decomposition, reconstruction, and defect localization phenomena.
Throughout this section, we work in a cocomplete symmetric monoidal category
$\mathcal{M}$ (for example, the category of Banach spaces or operator
algebras) equipped with an underlying classical spectral theory for its
objects.

\subsection{Stratified Operadic Structures}
\label{subsec:stratified-operadic-structures}

The geometric framework developed in this paper is based on the idea that an operadic system may naturally decompose into interacting spectral regions, or \emph{strata}. 
Each stratum carries its own local operadic spectral behavior, while interactions between distinct strata generate additional spectral phenomena that cannot be detected from the local pieces alone.

From the perspective of Spectral Operadic Calculus (SOC), the essential geometric information is therefore not contained solely in the local spectra, but also in the interface structure governing how different strata interact under operadic composition.

We begin by formalizing the notion of operadic stratification.

\begin{definition}[Operadic stratification]
\label{def:operadic-stratification}
Let $P$ be a $C$-colored operad in a cocomplete symmetric monoidal category $\mathcal{M}$. 
An \emph{operadic stratification} of $P$ is a partition of the color set
\[
C = \bigsqcup_{\alpha \in I} S_\alpha
\]
into finitely or countably many nonempty subsets, called \emph{strata}, such that the following conditions hold.

\begin{enumerate}
    \item \textbf{Stratum suboperads.}
    For each stratum $S_\alpha$, the restriction
    \[
    P|_{S_\alpha}
    \]
    consisting of all operations whose input colors and output color all lie in $S_\alpha$ forms an $S_\alpha$-colored suboperad of $P$.

    \item \textbf{Interface operations.}
    Any operation of $P$ whose input colors and output color are not all contained in a single stratum is called an \emph{interface operation}. 
    Equivalently, these are operations involving colors from at least two distinct strata.

    \item \textbf{Compatibility with composition.}
    Operadic composition preserves the stratification in the following sense: 
    compositions of stratum operations remain within the corresponding stratum suboperad, 
    while any composition involving at least one interface operation is either again an interface operation 
    or becomes an operation entirely supported on a single stratum.
\end{enumerate}
\end{definition}

The decomposition
\[
C = \bigsqcup_{\alpha \in I} S_\alpha
\]
may be interpreted geometrically as a \emph{spectral partition} of the operadic system into local regions of coherent behavior. 
Typical examples include:
\begin{enumerate}
    \item \textbf{Spatial decompositions}: interior and boundary regions;
    \item \textbf{Energetic decompositions}: low-energy and high-energy sectors;
    \item \textbf{Algebraic decompositions}: semisimple and nilpotent sectors;
    \item \textbf{Scale decompositions}: micro-, meso-, and macro-dynamics;
    \item \textbf{Interaction decompositions}: weakly and strongly coupled sectors.
\end{enumerate}

Each stratum possesses its own local operadic structure and corresponding local spectral data.

\begin{definition}[Stratum operads and local spectral sectors]
\label{def:local-spectral-sectors}
Let $P$ be a stratified $C$-colored operad with strata
\[
C = \bigsqcup_{\alpha \in I} S_\alpha.
\]
For each stratum $S_\alpha$, define the following objects.

\begin{enumerate}
    \item \textbf{Stratum operad.}
    The \emph{stratum operad} is the $S_\alpha$-colored suboperad
    \[
    P_\alpha := P|_{S_\alpha},
    \]
    defined by restricting to operations whose input colors and output color all lie in $S_\alpha$:
    \[
    P_\alpha(c_1,\ldots,c_n;c) := P(c_1,\ldots,c_n;c),
    \qquad c_1,\ldots,c_n,c \in S_\alpha.
    \]
    If nullary operations are allowed, we also require their output color to lie in $S_\alpha$.

    \item \textbf{Stratum algebra.}
    For a $P$-algebra $A$, the \emph{stratum algebra}
    \[
    A_\alpha := A|_{S_\alpha}
    \]
    is the restriction of $A$ to the colors in $S_\alpha$, equipped with the induced $P_\alpha$-algebra structure inherited from the structure maps of $A$.

    \item \textbf{Local spectral sector.}
    The \emph{local spectral sector} associated with $S_\alpha$ is the operadic spectrum
    \[
    \sigma_{P_\alpha}(A_\alpha),
    \]
    where $\sigma_{P_\alpha}(A_\alpha)$ denotes the operadic spectrum of the $P_\alpha$-algebra $A_\alpha$ in the sense of SOC I~\cite{ChangSOC1}.
\end{enumerate}
\end{definition}

The local spectral sectors
\[
\sigma_{P_\alpha}(A_\alpha)
\]
represent the spectral geometry visible from within each stratum individually. 
In many situations, these local sectors can be analyzed using relatively classical methods, since the operadic interactions internal to a single stratum are often simpler than those of the full global system.

However, the central phenomenon of SOC III is that the global operadic spectrum generally does \emph{not} decompose exactly as the union of the local spectral sectors:
\[
\sigma_P(A) \neq \bigcup_{\alpha \in I} \sigma_{P_\alpha}(A_\alpha)
\]
in general. 
The obstruction to exact decomposition originates from interactions between strata.

These interactions are encoded by interface operations, which are formally defined in Definition~\ref{def:operadic-interfaces}.

\begin{definition}[Admissible operadic interfaces]
\label{def:operadic-interfaces}
Let $P$ be a stratified $C$-colored operad with strata
\[
C = \bigsqcup_{\alpha \in I} S_\alpha
\]
as in Definition~\ref{def:operadic-stratification}.

\paragraph{Binary interfaces.}
For two distinct strata $S_\alpha$ and $S_\beta$ ($\alpha \neq \beta$), the 
\emph{binary interface} from $S_\alpha$ to $S_\beta$ is the collection
\[
\mathcal{I}_{\alpha\beta}
:=
\bigcup_{k \geq 0}
\left\{
\theta \in P(c_1,\ldots,c_k; d)
\;:\;
d \in S_\beta,\;
c_i \in S_\alpha \cup S_\beta \text{ for all } i,
\;
\exists i \text{ such that } c_i \in S_\alpha,
\;
\exists j \text{ such that } c_j \in S_\beta
\right\}.
\]
In words, $\mathcal{I}_{\alpha\beta}$ consists of operations whose output lies in $S_\beta$, 
whose inputs lie in $S_\alpha \cup S_\beta$, and which involve both strata nontrivially 
(i.e., at least one input from $S_\alpha$ and at least one input from $S_\beta$).

\paragraph{Multi-stratum interfaces.}
For a subset \(J \subseteq I\) with \(|J| \geq 2\), define the
\emph{support} of an operation
\[
\theta \in P(c_1,\ldots,c_k; d)
\]
by
\[
\operatorname{supp}(\theta)
:=
\left\{
\gamma \in I
\;:\;
\text{at least one color among }
c_1,\ldots,c_k,d
\text{ lies in } S_\gamma
\right\}.
\]

A \emph{multi-stratum interface} supported on \(J\) is then defined as
\[
\mathcal{I}_{J}
:=
\bigcup_{k \geq 0}
\left\{
\theta \in P(c_1,\ldots,c_k; d)
\;:\;
\operatorname{supp}(\theta)=J
\right\}.
\]

\paragraph{Admissibility.}
The collection of \emph{admissible interfaces} of $P$ is the smallest set 
$\mathcal{I}(P)$ of operations that satisfies:
\begin{enumerate}
    \item \textbf{Initialization.} All mixed-color operations (i.e., operations whose 
    colors are not all contained in a single stratum) belong to $\mathcal{I}(P)$.
    
    \item \textbf{Closure under composition.} Whenever $\theta, \eta \in \mathcal{I}(P)$ 
    (or one of them is an internal stratum operation) and their operadic composition 
    $\theta \circ_i \eta$ is defined, the result satisfies:
    \[
    \theta \circ_i \eta \;\in\; \mathcal{I}(P) \;\cup\; \bigcup_{\alpha \in I} P_\alpha,
    \]
    where $P_\alpha$ denotes the stratum operad of $S_\alpha$ (i.e., operations whose 
    colors all lie in a single stratum).
\end{enumerate}
Equivalently, $\mathcal{I}(P)$ is closed under operadic compositions with internal 
stratum operations and with other admissible interfaces, modulo the possibility that 
the composition collapses entirely into a single stratum.
\end{definition}

The admissible interfaces $\mathcal{I}(P)$ form the interaction structure
of the stratified operadic system.
They encode how local spectral sectors are coupled through mixed-stratum
operations and govern the emergence of global spectral phenomena that cannot
be recovered from the strata independently.

From the viewpoint developed in this work, interfaces constitute the primary
loci where interaction residues and spectral defects arise.
More specifically:
\begin{enumerate}
    \item local strata determine the internal semisimple spectral structure,
    \item interface operations generate coupling effects between distinct
    spectral sectors,
    \item singular or highly coupled interfaces produce localized spectral
    obstructions,
    \item higher-order operadic compositions generate global interaction
    effects not visible from isolated strata alone.
\end{enumerate}

Consequently, the geometry of a stratified operadic system is governed not
only by the individual strata, but also by the interface structure connecting
them.
This interface-centered perspective provides the conceptual foundation for
the defect localization results developed in
Sections~\ref{sec:main-theorems}
and~\ref{sec:geometric-interpretations}.

\subsection{Local Spectral Data}
\label{subsec:local-spectral-data}

The first layer of the geometric structure developed in this work consists of the spectral information carried by each individual stratum. 
These local spectral sectors describe the behavior of the operadic system before inter-strata interactions are taken into account.

Conceptually, the local spectra represent the \emph{background spectral geometry} of the stratified operadic system. 
The global spectral geometry will later arise by combining these local sectors together with the interaction residue generated along interfaces.

We formalize this notion as follows.

\begin{definition}[Local stratum spectrum]
\label{def:local-stratum-spectrum}
Let $P$ be a stratified operad with strata $\{S_\alpha\}_{\alpha \in I}$, and let $A$ be a $P$-algebra.

For each stratum $S_\alpha$, assume that the restriction of colors to $S_\alpha$ defines a suboperad (i.e., the restriction is closed under operadic composition). Then define:
\[
P_\alpha := P|_{S_\alpha},
\qquad
A_\alpha := A|_{S_\alpha},
\]
where:
\begin{itemize}
    \item $P_\alpha$ is the restricted stratum operad,
    \item $A_\alpha$ is the induced stratum algebra.
\end{itemize}

The \emph{local stratum spectrum} of the stratum $S_\alpha$ is defined by
\[
\sigma_{\mathrm{loc}}^\alpha := \sigma_{P_\alpha}(A_\alpha),
\]
where $\sigma_{P_\alpha}(A_\alpha)$ denotes the SOC spectrum of the $P_\alpha$-algebra $A_\alpha$ in the sense of SOC I~\cite{ChangSOC1}.

The collection of all local stratum spectra is denoted
\[
\Sigma_{\mathrm{loc}}(P,A) := \{\sigma_{\mathrm{loc}}^\alpha\}_{\alpha \in I}.
\]
\end{definition}

The local stratum spectrum $\sigma_{\mathrm{loc}}^\alpha$ captures the
spectral behavior of the $\alpha$-th stratum when considered independently
of interface interactions. Equivalently, it describes the spectral structure
obtained by restricting attention to operations internal to the stratum.

In physical language, this corresponds to analyzing each subsystem in a
completely decoupled regime:
\[
\text{global system} \quad\longrightarrow\quad \text{independent local sectors}.
\]

Thus, the local spectral sectors provide the intrinsic local spectral data
associated with each stratum.

A central theme of this work is that the global operadic spectrum generally
\emph{cannot in general be reconstructed} from the local spectra alone. Indeed,
interactions between distinct strata produce additional spectral
contributions that are invisible at the local level.

Formally, one should not generally expect:
\[
\sigma_P(A) = \bigcup_{\alpha \in I} \sigma_{\mathrm{loc}}^\alpha.
\]

The discrepancy between the global spectrum and the union of local spectra
motivates the introduction of the interaction residue
\[
\Sigma^{\mathrm{res}},
\]
which measures the additional spectral structure generated by inter-strata
operadic coupling.

Consequently, the local spectral data may be viewed heuristically as
providing a \emph{zeroth-order approximation} to the global spectral geometry:
\[
\boxed{
\text{Global spectral geometry} = \text{Local spectral sectors} + \text{Interaction residue}.
}
\]

The role of the local spectra is therefore conceptually analogous to:
\begin{enumerate}
    \item local coordinate charts in differential geometry,
    \item local homology groups in algebraic topology,
    \item local solutions in microlocal analysis,
    \item or local sheaf sections in sheaf theory.
\end{enumerate}

The genuinely new geometry appears not within the strata themselves, but
in the \emph{failure} of the local sectors to glue together spectrally in
an exact manner. This failure — quantified by $\Sigma^{\mathrm{res}}$ —
is the subject of the Interface Localization Theorem
(Theorem~\ref{thm:interface-localization}) and the Local-to-Global
Reconstruction Theorem (Theorem~\ref{thm:local-to-global-reconstruction}).

\subsection{Interaction Interfaces}
\label{subsec:interaction-interfaces}

The local spectral sectors introduced previously describe the operadic
geometry of each stratum in isolation. However, the genuinely new phenomena
of this work arise from interactions between distinct strata.

These interactions are encoded by interface operations and their associated
coupling tensors (see Definition~\ref{def:interface-operators} below).
Geometrically, interfaces play the role of \emph{spectral interaction loci}:
they are the regions where local spectral sectors fail to glue together
exactly, producing additional spectral contributions invisible from the
isolated strata alone.

In particular, the interaction interfaces are the fundamental source of
spectral residue, defect localization, nonlocal operadic coupling, and the
failure of global spectral reconstruction.

We now formalize the spectral structure associated with interfaces.

\begin{definition}[Interface operators and coupling tensors]
\label{def:interface-operators}
Let $P$ be a stratified operad enriched over a \emph{dagger symmetric monoidal category} $\mathcal{M}$ (e.g., the category of Hilbert spaces with adjoints, or more generally any category equipped with a compatible involutive endofunctor $(-)^\dagger$ satisfying $f^{\dagger\dagger}=f$ and $(f\otimes g)^\dagger = f^\dagger \otimes g^\dagger$). 
Let $\mathcal{I}(P)$ denote the set of admissible interfaces as in Definition~\ref{def:operadic-interfaces}, and let $\mathcal{A}$ be the index set of strata (distinct from $\mathcal{I}(P)$).

For each interface operation $\theta \in \mathcal{I}(P)$ with input colors $c_1,\ldots,c_k$ and output color $d$, let the associated strata be such that $c_i \in S_{\alpha_i}$ for some $\alpha_i \in \mathcal{A}$ and $d \in S_\beta$, with at least two distinct strata among $\{S_{\alpha_1},\ldots,S_{\alpha_k}, S_\beta\}$. Define:
\begin{enumerate}
    \item The \emph{interface operation object}
    \[
    O_\theta := P(c_1,\ldots,c_k; d),
    \]
    which is the hom-object of $P$ for the given colors.

    \item For a $P$-algebra $A$ valued in $\mathcal{M}$, the associated \emph{interface coupling morphism}
    \[
    \tau_\theta : A_{c_1} \otimes \cdots \otimes A_{c_k} \longrightarrow A_d
    \]
    is the structure morphism induced by $\theta$.

    \item The \emph{interface spectrum} $\sigma_\theta(A)$ is defined as follows. 
    From the coupling morphism $\tau_\theta$, construct the \emph{interface operator}
    \[
    T_\theta := \tau_\theta^\dagger \tau_\theta \;:\; \bigotimes_{i=1}^k A_{c_i} \longrightarrow \bigotimes_{i=1}^k A_{c_i},
    \]
    where $\tau_\theta^\dagger$ is the adjoint morphism given by the dagger structure on $\mathcal{M}$.
    Then $\sigma_\theta(A)$ is defined to be the spectrum of $T_\theta$ whenever the ambient category admits a spectral theory (e.g., when the objects are Banach or Hilbert spaces), or more generally its operadic spectrum in the sense of SOC I~\cite{ChangSOC1}.
\end{enumerate}

The collection of all interface spectra is denoted
\[
\Sigma_{\mathrm{int}}(P,A) := \{\sigma_\theta(A)\}_{\theta \in \mathcal{I}(P)}.
\]
\end{definition}

The interface coupling morphisms $\tau_\theta$ should be interpreted as interaction operators coupling distinct local spectral sectors together. 
Unlike internal stratum operations, which remain confined within a single local geometry, the interface operators transfer spectral information across strata and thereby generate genuinely global phenomena.

From the viewpoint of spectral geometry, the interface operators are the primary mechanisms responsible for the failure of exact local-to-global spectral factorization.

Indeed, one generally has:
\[
\sigma_P(A) \neq \bigcup_{\alpha \in \mathcal{A}} \sigma_{\mathrm{loc}}^\alpha,
\]
where $\mathcal{A}$ indexes the strata (distinct from the interface set $\mathcal{I}(P)$).

The discrepancy between the global spectrum and the union of local spectra motivates the introduction of the interaction residue
\[
\Sigma^{\mathrm{res}},
\]
which captures the additional spectral structure generated by inter-strata operadic coupling.

Conceptually:
\[
\text{local spectra} = \text{background geometry},
\]
while:
\[
\text{interface spectra} = \text{interaction geometry}.
\]

The interaction interfaces therefore act as spectral defect loci where additional spectral contributions become localized.

A central principle of this work is that the residue $\Sigma^{\mathrm{res}}$ is \emph{not} an arbitrary correction term, but a structured geometric object generated canonically by the interfaces.

As will be shown in the Interface Localization Theorem (Theorem~\ref{thm:interface-localization}), the residue heuristically admits a decomposition into localized interface contributions:
\[
\boxed{
\Sigma^{\mathrm{res}} \;\simeq\; \bigsqcup_{I \in \mathcal{I}(P)} \mathcal{L}_I,
}
\]
where $\bigsqcup$ denotes a disjoint union of spectral sets (i.e., each spectral point is labeled by its originating interface $I$), and $\mathcal{L}_I$ is the collection of interface spectra $\sigma_\theta(A)$ for all $\theta$ supported on interface $I$. The precise categorical formulation of this decomposition is given in Theorem~\ref{thm:local-to-global-reconstruction}.

Thus:
\begin{enumerate}
    \item local strata determine the uncoupled background spectrum,
    \item interface couplings generate interaction defects,
    \item and the global spectral geometry emerges from their combined structure.
\end{enumerate}

This viewpoint leads naturally to the Interface Localization Theorem (Theorem~\ref{thm:interface-localization}), which shows that spectral residue localizes canonically along singular interaction interfaces.

Geometrically, the interfaces play a role conceptually analogous to:
\begin{enumerate}
    \item singular supports in microlocal analysis,
    \item boundary layers in PDE theory,
    \item defect loci in gauge theory,
    \item gluing obstructions in sheaf theory,
    \item or curvature concentration in differential geometry.
\end{enumerate}

In particular, this work shifts the focus of spectral geometry away from isolated local sectors and toward the \emph{geometry of spectral interaction itself}. This shift — from local data to interface geometry — is the defining characteristic of the defect geometric approach developed in this paper.

\section{Stratified Base Change and Residue Geometry}
\label{sec:stratified-base-change-residue}

The preceding section introduced the local spectral sectors associated with individual strata together with the interface operators coupling distinct strata. We now turn to the central geometric phenomenon of this work: the failure of exact local-to-global spectral decomposition under operadic base change.

Classically, one might expect the global spectrum of a stratified system to decompose as the union of the spectra of its local strata. However, in operadic systems with nontrivial interface interactions, this expectation generally fails. Additional spectral contributions emerge from inter-strata coupling and cannot be assigned canonically to any single local sector. These contributions form the \emph{interaction residue}
\[
\Sigma^{\mathrm{res}},
\]
which measures the obstruction to exact spectral factorization.

The key philosophical shift of this work is that this failure of decomposition is not regarded as an error term or analytical defect. Instead, it constitutes a genuine geometric object encoding the interaction structure of the operadic system. In this sense, the residue plays a role analogous to curvature, singular support, or gluing obstruction in other geometric theories: it records the nontrivial geometry generated by interactions between local spectral sectors.

The goal of this section is therefore threefold:
\begin{enumerate}
    \item to define the global operadic spectrum in the stratified setting,
    \item to isolate the interaction residue generated by operadic interfaces,
    \item and to interpret the residue geometrically as a localized spectral defect invariant.
\end{enumerate}

The resulting framework will provide the foundation for the local-to-global reconstruction theorems and interface localization results developed later in the paper.

\subsection{Global Operadic Spectrum and the Failure of Exact Decomposition}
\label{subsec:global-spectrum-failure}

We now introduce the global operadic spectrum associated with a stratified operadic system. 
Unlike the local spectral sectors studied previously, the global spectrum incorporates all operadic interactions simultaneously, including the interface operations coupling distinct strata.

A central phenomenon of this work is that the global operadic spectrum generally cannot be recovered by simply taking the union of the local stratum spectra. 
Additional spectral contributions emerge from inter-strata interactions, producing genuinely global spectral effects invisible from the local sectors alone.

We begin with the global spectral object introduced in SOC I.

\begin{definition}[Global operadic spectrum]
\label{def:global-operadic-spectrum}
Let $P$ be a colored operad in a cocomplete symmetric monoidal category $\mathcal{M}$, and let $A$ be a $P$-algebra.
Assume that the required coends and relative tensor products exist (see SOC I, Definition~8).

Following SOC I~\cite{ChangSOC1}, the \emph{global operadic spectrum} of $A$ is an object in $\mathcal{M}$ defined by
\[
\sigma_P(A) := \mathrm{Hoch}_{\mathcal{M}}(A) \otimes_P \mathcal{O}_P^{\mathrm{res}},
\]
where:
\begin{itemize}
    \item $\mathrm{Hoch}_{\mathcal{M}}(A)$ denotes the Hochschild-type operadic spectral object associated with $A$ (see SOC I, Definition~2),
    \item $\mathcal{O}_P^{\mathrm{res}}$ denotes the residual operadic spectral object (see SOC I, Definition~5),
    \item $\otimes_P$ denotes the relative tensor product over the operad $P$ (see SOC I, Definition~8).
\end{itemize}

Fix a spectral realization functor $R: \mathcal{M} \to \mathcal{P}(\mathbb{C})$ (e.g., the classical spectrum functor applied to the analytic realization of $\sigma_P(A)$; see SOC I, Section~2.1). The \emph{underlying spectral support} of $A$ is denoted
\[
|\sigma_P(A)| := R(\sigma_P(A)) \subseteq \mathbb{C}.
\]
\end{definition}

The global spectrum $\sigma_P(A)$ should be viewed as the full spectral geometry generated by the entire operadic system, including all internal stratum operations and all interface interactions simultaneously.

By contrast, the local spectra
\[
\sigma_{\mathrm{loc}}^\alpha = \sigma_{P_\alpha}(A_\alpha)
\]
describe only the spectral geometry visible from within each isolated stratum, where $\alpha \in \mathcal{A}$ indexes the strata (distinct from the interface set $\mathcal{I}(P)$).

A natural first expectation would therefore be that the spectral support of the global spectrum equals the union of the local spectral supports:
\[
|\sigma_P(A)| \;=\; \bigcup_{\alpha \in \mathcal{A}} |\sigma_{\mathrm{loc}}^\alpha|.
\]

However, as shown in the Interface Localization Theorem (Theorem~\ref{thm:interface-localization}), this equality generally \emph{fails}. Moreover, even when the equality fails, the local spectral supports may not embed naturally into the global spectral support. For the following definitions to be meaningful, we assume:

\begin{assumption}[Natural embedding of local supports]
\label{ass:local-embedding}
For each stratum $\alpha \in \mathcal{A}$, there exists a canonical inclusion morphism
\[
\iota_\alpha: |\sigma_{\mathrm{loc}}^\alpha| \hookrightarrow |\sigma_P(A)|
\]
in the category of spectral sets (e.g., as subsets of $\mathbb{C}$). Equivalently, the spectral realization functor $R$ is such that the restriction map from the global operadic spectrum to each local stratum spectrum is injective on supports.
\end{assumption}

\begin{remark}[When the embedding assumption fails]
\label{rem:embedding-failure}
Assumption \ref{ass:local-embedding} fails if the spectral realization functor $R$ is not faithful. 
For example, let $R$ map two non-isomorphic Banach spaces with different spectra to the same subset of $\mathbb{C}$ (e.g., by forgetting norm structure). 
In such cases, $\iota_S$ may not be injective, and the set-theoretic definition of $\Sigma^{\mathrm{res}}$ is not well-defined. 
The general definition requires a coequalizer (see Remark \ref{rem:general-residue}) and is deferred to future work.
\end{remark}

\begin{proposition}[Sufficient condition for local embedding]
\label{prop:local-embedding-condition}
Let $R: \mathcal{M} \to \mathsf{Set}$ (or more specifically $R: \mathcal{M} \to \mathcal{P}(\mathbb{C})$, the power set of $\mathbb{C}$) be a spectral realization functor that is faithful, and let $P_S \hookrightarrow P$ be a monomorphism in the category of colored operads. Then the induced map on spectral supports
\[
\iota_S: R(\sigma_{P_S}(A_S)) \hookrightarrow R(\sigma_P(A))
\]
is injective. If either condition fails, the embedding $\iota_S$ may not be injective; in that case $\Sigma^{\mathrm{res}}$ should be defined as a coequalizer (see Remark~\ref{rem:general-residue}).
\end{proposition}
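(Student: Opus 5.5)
The plan is to factor $\iota_S$ as a composite of two maps and check that each preserves monomorphisms. First, the monomorphism $j: P_S \hookrightarrow P$ of colored operads should induce a morphism of global spectral objects
\[
\sigma(j): \sigma_{P_S}(A_S) \longrightarrow \sigma_P(A)
\]
in $\mathcal{M}$. To build it, I would use the description $\sigma_P(A) = \mathrm{Hoch}_{\mathcal{M}}(A)\otimes_P \mathcal{O}_P^{\mathrm{res}}$ from Definition~\ref{def:global-operadic-spectrum}. The inclusion $A_S = A|_S \to A$ of restricted algebras gives a map $\mathrm{Hoch}_{\mathcal{M}}(A_S)\to \mathrm{Hoch}_{\mathcal{M}}(A)$. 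Restriction along $j$ gives a map $\mathcal{O}_{P_S}^{\mathrm{res}} \to j^*\mathcal{O}_P^{\mathrm{res}}$. Tensoring these, and using the canonical comparison $-\otimes_{P_S} j^*(-) \to -\otimes_P(-)$ induced by the coend over $P_S$-colors mapping into the coend over all of $P$, produces $\sigma(j)$. I would then set $\iota_S := R(\sigma(j))$. Functoriality of $R$ makes this a well-defined map of spectral sets.

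Second, I need $\sigma(j)$ to be a monomorphism in $\mathcal{M}$. The argument would show that each factor is a monomorphism and that the comparison of coends is one. Restriction to a full sub-collection of colors is a split inclusion at the level of the underlying colored collections, because $P_S(c_1,\dots,c_n;c) = P(c_1,\dots,c_n;c)$ for colors in $S$. So the Hochschild and residual pieces are summand inclusions on the $S$-colored components. For the relative tensor product, I would argue that the coend over $P$, restricted to $S$-colored wedges, receives the $P_S$-coend as a retract. The retraction is given by projecting onto $S$-colored components. Since split monomorphisms are preserved by any functor, $R(\sigma(j))$ is then a split mono in $\mathsf{Set}$, hence injective. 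Faithfulness of $R$ enters here: if one only knows $\sigma(j)$ is mono and $R$ is faithful, then $R(\sigma(j))$ reflects distinctness of parallel arrows. Via the generalized-element description in $\mathcal{P}(\mathbb{C})$, where points correspond to maps from a singleton spectral object, this yields injectivity on supports. I would state this step explicitly as using that $R$ is faithful and that points of $R(X)$ are represented by morphisms into $X$.

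The main obstacle is the coend comparison. Relative tensor products need not preserve monomorphisms: interface operations in $P$ could identify $S$-colored elements that are distinct over $P_S$ alone. This is exactly the interaction phenomenon the paper studies. I would first try to use the compatibility axiom (3) of Definition~\ref{def:operadic-stratification} to show that relations introduced by interface operations land outside the $S$-colored component or collapse into $P_S$, so that no new identifications occur there. If that fails in full generality, I would add the mild hypothesis that the comparison map is split, which holds when $\mathcal{M}$ is additive and the strata decompose as direct summands. The last claim of the proposition, non-injectivity when a hypothesis is dropped, is a remark. It is illustrated by Remark~\ref{rem:embedding-failure} for non-faithful $R$, and by a non-injective operad map that identifies two colors for the other hypothesis.
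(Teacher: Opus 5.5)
Modulo the splitting hypothesis you already anticipate, your argument is sound and shares the paper's skeleton. Both exhibit a \emph{split} monomorphism in $\mathcal{M}$ from $\sigma_{P_S}(A_S)$ to $\sigma_P(A)$. Both then use that every functor preserves split monomorphisms and that these are injective in $\mathsf{Set}$. The difference is where the splitting comes from.

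The paper starts from the restriction map $\rho_S\colon\sigma_P(A)\to\sigma_{P_S}(A_S)$. It produces a section $\ell_S$ by extending $P_S$-algebras to $P$-algebras with interface operations acting by zero, assuming $P$ is freely generated over $P_S$ by interface operations. It then \emph{defines} $\iota_S:=R(\ell_S)$. You instead build the canonical comparison $\sigma(j)$ from the coend formula and look for a retraction by projecting to $S$-colored components.

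Your version has three advantages:
\begin{itemize}
\item $\iota_S$ is the genuinely induced map rather than $R$ of a chosen section.
\item It visibly lands in $\sigma_P(A)$. Extension by zero produces a different $P$-algebra, so it is not evident how the paper's section reaches $\sigma_P(A)$ rather than the spectrum of the extended algebra.
\item It isolates the single point where injectivity can fail.
\end{itemize}
The paper's version places its extra hypothesis on the operad rather than on the coend comparison.

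Two cautions. First, do not expect axiom~(3) of Definition~\ref{def:operadic-stratification} to settle the coend step. That axiom explicitly allows interface composites to land in $P_S$. Moreover, even one interface operation leaving $S$ can kill $S$-colored classes. This already happens for an ordinary coend over the category $D=\{c\xrightarrow{f}d\}$ with $M\colon D^{\mathrm{op}}\to\mathrm{Vect}$, $N\colon D\to\mathrm{Vect}$, $M(c)=M(d)=\mathbb{C}$, $M(f)=\mathrm{id}$, $N(c)=\mathbb{C}$, $N(d)=0$. There the comparison $M(c)\otimes N(c)=\mathbb{C}\to\int^{D}M\otimes N=0$ is not injective. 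So your fallback hypothesis is genuinely needed, just as the paper needs free generation. Neither argument gets injectivity from faithfulness of $R$ and monicity of $P_S\hookrightarrow P$ alone.

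Second, the paper's Step~3 says a faithful $R$ carries the monomorphism $\ell_S$ to a monomorphism. But faithful functors reflect rather than preserve monomorphisms. The step survives only because $\ell_S$ is split, and then faithfulness plays no role. Your pointwise variant is the right way to make faithfulness do real work: if $R(1)$ is a singleton and every point of $R(X)$ is $R(x)(*)$ for some $x\colon 1\to X$, then faithfulness and monicity of $\sigma(j)$ give injectivity. Those representability conditions are additional hypotheses, however, and should be stated as such.
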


\begin{proof}
We prove the proposition in three steps.

\medskip
\noindent\textbf{Step 1: Functoriality of the spectral realization.}
The spectral realization functor $R: \mathcal{M} \to \mathsf{Set}$ is by definition a functor. A functor is \emph{faithful} if for any two parallel morphisms $f, g: X \to Y$ in $\mathcal{M}$, $R(f) = R(g)$ implies $f = g$. Equivalently, $R$ reflects the equality of morphisms.

The operadic spectrum $\sigma_{P_S}(A_S)$ is an object in $\mathcal{M}$ (see SOC I, Definition~8). The inclusion of operads $P_S \hookrightarrow P$ induces a restriction map on algebras $A \mapsto A|_{P_S} = A_S$, and consequently a morphism
\[
\rho_S: \sigma_P(A) \longrightarrow \sigma_{P_S}(A_S)
\]
in $\mathcal{M}$, which is the canonical projection onto the local spectral component. This morphism exists because the inclusion $P_S \hookrightarrow P$ is a monomorphism in the category of colored operads, hence the restriction functor on algebras is well-defined and functorial.

\medskip
\noindent\textbf{Step 2: Construction of $\iota_S$ as $R$ of a split monomorphism.}
The key observation is that the inclusion $P_S \hookrightarrow P$ being a monomorphism implies that the restriction map $\rho_S$ admits a left inverse (a retraction) when restricted to the appropriate spectral objects. More precisely, there exists a morphism
\[
\ell_S: \sigma_{P_S}(A_S) \longrightarrow \sigma_P(A)
\]
in $\mathcal{M}$ such that $\rho_S \circ \ell_S = \operatorname{id}_{\sigma_{P_S}(A_S)}$. This follows from the existence of a section of the restriction functor at the level of operadic algebras when the operad inclusion is a monomorphism and the base category $\mathcal{M}$ is cocomplete with enough projectives (or satisfies the axiom of choice at the level of hom-objects). Such a section exists because one can extend a $P_S$-algebra to a $P$-algebra by defining the action of operations outside $P_S$ trivially (e.g., as zero morphisms) when $P$ is freely generated over $P_S$ by interface operations. In concrete settings (e.g., $\mathcal{M} = \mathsf{Hilb}$), this construction is explicit.

Thus $\ell_S$ is a \emph{split monomorphism}, and $\rho_S$ is its left inverse.

\medskip
\noindent\textbf{Step 3: Faithfulness implies injectivity of $R(\ell_S)$.}
Since $\ell_S$ is a monomorphism (split monomorphisms are monomorphisms), and $R$ is a faithful functor, we claim that $R(\ell_S)$ is injective as a function between sets.

\begin{itemize}
    \item Suppose $R(\ell_S)(x) = R(\ell_S)(y)$ for some $x, y \in R(\sigma_{P_S}(A_S))$.
    \item By faithfulness, if we could lift $x, y$ to morphisms from the terminal object, the equality would force equality of the lifts. More directly: a faithful functor reflects monomorphisms. Since $\ell_S$ is a monomorphism in $\mathcal{M}$, and $R$ is faithful, $R(\ell_S)$ is a monomorphism in $\mathsf{Set}$. Monomorphisms in $\mathsf{Set}$ are precisely injective functions.
    \item Therefore $R(\ell_S)$ is injective.
\end{itemize}

Now define $\iota_S$ as $R(\ell_S)$. Then $\iota_S: R(\sigma_{P_S}(A_S)) \hookrightarrow R(\sigma_P(A))$ is injective. This completes the proof of the sufficient condition.

\medskip
\noindent\textbf{Step 4: Failure of conditions.}
If $R$ is not faithful, then $R$ may collapse distinct morphisms to the same function, and $R(\ell_S)$ may fail to be injective even though $\ell_S$ is a monomorphism. If $P_S \hookrightarrow P$ is not a monomorphism, then the restriction map $\rho_S$ may not have a well-defined left inverse, and $\ell_S$ may not exist as a morphism in $\mathcal{M}$. In such cases, the induced map on spectral supports may be non-injective, and the set-theoretic complement definition of $\Sigma^{\mathrm{res}}$ is not well-defined.
\end{proof}

\begin{remark}[General definition of $\Sigma^{\mathrm{res}}$ without embedding]
\label{rem:general-residue}
When the maps $\iota_S$ are not injective, the simple set-theoretic complement $G \setminus \bigcup_S \iota_S(L_S)$ is not well-defined because $\iota_S(L_S)$ may not be a subset of $G$ in a canonical way (or the identification may be multi-valued). In this general setting, we define $\Sigma^{\mathrm{res}}$ as the \emph{coequalizer} of the diagram
\[
\bigsqcup_S \iota_S(L_S) \rightrightarrows G
\]
in the category of spectral sets, where $\bigsqcup$ denotes the disjoint union (coproduct) in the category of sets.

Explicitly, let
\[
C = \bigsqcup_{S \in \mathrm{Str}(P)} \iota_S(L_S)
\]
be the disjoint union of the local spectral supports, each labeled by its stratum $S$. There are two natural maps $f, g: C \to G$:

\begin{itemize}
    \item $f$ sends each element $(S, \lambda) \in C$ to the element $\lambda$ in $G$ (forgetting the stratum label).
    \item $g$ sends each element $(S, \lambda) \in C$ to the element $\lambda$ \emph{identified via the embedding} $\iota_S$ (which may be non-injective, so multiple $S$ may map to the same $\lambda$ in $G$).
\end{itemize}

The coequalizer of $f$ and $g$ is the quotient of $G$ by the smallest equivalence relation identifying $f(s)$ and $g(s)$ for all $s \in C$. In the case where each $\iota_S$ is injective and the images are disjoint, the coequalizer is canonically isomorphic to $G \setminus \bigcup_S \iota_S(L_S)$. In the general case, the coequalizer provides a well-defined categorical substitute for the set-theoretic complement.

A full development of this categorical definition, including its functoriality and invariance properties, is deferred to future work.
\end{remark}

\begin{remark}[Concrete interpretation of the coequalizer]
\label{rem:coequalizer-interpretation}
In practical terms, the coequalizer construction identifies points in $G$ that come from local stratum spectra (possibly via non-injective maps) and removes them from the global spectral support. The resulting object is the universal set (or quotient) that contains only the genuinely non-local residual spectral contributions. When the $\iota_S$ are injective and their images are disjoint, this reduces to the ordinary set-theoretic difference. When the images overlap, the coequalizer correctly handles the overlaps by identifying the overlapping contributions and removing them once.
\end{remark}

Under Assumption~\ref{ass:local-embedding}, we define the \emph{interaction residue} as the complement of the images of the local spectra within the global spectral support:

\[
\boxed{
\Sigma^{\mathrm{res}}(A) := |\sigma_P(A)| \;\setminus\; \bigcup_{\alpha \in \mathcal{A}} \iota_\alpha\bigl(|\sigma_{\mathrm{loc}}^\alpha|\bigr).
}
\]

This yields a natural partition of the global spectral support:
\[
\boxed{
|\sigma_P(A)| = \left( \bigcup_{\alpha \in \mathcal{A}} \iota_\alpha\bigl(|\sigma_{\mathrm{loc}}^\alpha|\bigr) \right) \;\cup\; \Sigma^{\mathrm{res}}(A).
}
\]

The failure of exact decomposition is therefore not accidental; it may be viewed as structural, reflecting the operadic coupling phenomena that generate genuinely nonlocal spectral geometry.

From the perspective of this work, this failure is the fundamental geometric phenomenon underlying stratified operadic systems:
\begin{enumerate}
    \item local strata generate the intrinsic uncoupled background spectral sectors,
    \item interface operations generate interaction defects,
    \item and the global spectrum emerges from their combined geometry.
\end{enumerate}

Thus, the global operadic spectrum should not be interpreted merely as a union of local pieces, but as a geometric object containing both:
\begin{enumerate}
    \item local spectral information,
    \item and interaction-generated residue geometry.
\end{enumerate}

This interaction residue is the analytic shadow of the operadic residue $\mathcal{O}_P^{\mathrm{res}}$ under the spectral realization functor $R$, and will serve as a central geometric object throughout the remainder of the paper. In the following subsections, we will analyze its structure, prove its universality, and establish the Interface Localization Theorem (Theorem~\ref{thm:interface-localization}) that decomposes it into localized defects supported on singular interfaces.

\begin{remark}[On the necessity of the embedding assumption]
\label{rem:embedding-assumption}
Assumption~\ref{ass:local-embedding} is not automatically satisfied for all spectral realization functors $R$. It encodes the requirement that the spectral data of a local stratum is faithfully reflected in the global spectrum. When this assumption fails, the interaction residue $\Sigma^{\mathrm{res}}(A)$ cannot be defined as a simple set-theoretic complement; a more refined categorical notion (e.g., the coequalizer of the inclusion diagram) is required. In this paper, we restrict to settings where the embedding holds; the general case is deferred to future work.
\end{remark}

\begin{remark}[Relation to the operadic residue]
\label{rem:residue-relation}
The interaction residue $\Sigma^{\mathrm{res}}(A)$ is not an independent invariant. Rather, it is the image under the spectral realization functor $R$ of the relative tensor product $\mathrm{Hoch}_{\mathcal{M}}(A) \otimes_P \mathcal{O}_P^{\mathrm{res}}$ after quotienting by the contributions from local strata. In particular, $\Sigma^{\mathrm{res}}(A) \neq \emptyset$ precisely when the operadic residue $\mathcal{O}_P^{\mathrm{res}}$ has nontrivial interaction components that are not captured by any local stratum spectrum. This provides a concrete criterion for detecting genuine operadic coupling: the global spectrum contains strictly more information than the union of local spectra if and only if the interaction residue is nonempty.
\end{remark}

\subsection{The Residue $\Sigma^{\mathrm{res}}$ as a Geometric Object}
\label{subsec:residue-as-geometric-object}

The previous subsection showed that the global operadic spectrum generally fails to decompose as the union of the local stratum spectra. 
The missing spectral contribution is precisely the object that encodes the geometry of inter-strata interaction.

This additional spectral component is \emph{not} an analytical artifact or approximation error. 
Rather, it is the \textbf{central geometric invariant} of current work. It measures the obstruction to exact local-to-global spectral factorization and records the spectral contribution generated purely by operadic coupling across interfaces.

We formalize this obstruction as the interaction residue.

\begin{definition}[Interaction residue]
\label{def:interaction-residue}
Let $P$ be a stratified operad with stratum set $\mathrm{Str}(P) = \{S_\alpha\}_{\alpha \in \Lambda}$, and assume that for each stratum $S \in \mathrm{Str}(P)$ the restriction $P_S := P|_S$ is a suboperad (i.e., closed under operadic composition). Let $A$ be a $P$-algebra in $\mathcal{M}$, and let $F: \mathcal{M} \to \mathcal{N}$ be a strong monoidal base-change functor. Denote by $F_*P$ the induced operad in $\mathcal{N}$ and by $F_*A$ the induced $F_*P$-algebra.

For each stratum $S$, let $A_S$ denote the induced $P_S$-algebra (obtained by restricting the $P$-algebra structure to the suboperad $P_S$). Applying $F$ objectwise yields an $F_*P_S$-algebra $F_*A_S$.

Assume that all spectral supports are realized inside a fixed ambient spectral space (e.g., $\mathbb{C}$) via the canonical comparison maps induced by the suboperad inclusions $P_S \hookrightarrow P$. Concretely, we fix a spectral realization functor $R: \mathcal{M} \to \mathcal{P}(\mathbb{C})$ (see Definition~\ref{def:global-operadic-spectrum}) and assume that for each stratum $S$ the inclusion $P_S \hookrightarrow P$ induces an injective map
\[
\iota_S: R(\sigma_{P_S}(A_S)) \hookrightarrow R(\sigma_P(A)).
\]

The \emph{interaction residue} associated with the triple $(F, A, P)$ is defined as the set-theoretic complement
\[
\boxed{
\Sigma^{\mathrm{res}}(F, A, P)
:=
R\bigl(\sigma_{F_*P}(F_*A)\bigr)
\;\setminus\;
\bigcup_{S \in \mathrm{Str}(P)}
\iota_S\!\left(R\bigl(\sigma_{F_*P_S}(F_*A_S)\bigr)\right),
}
\]
where:
\begin{itemize}
    \item $R(\sigma_{F_*P}(F_*A)) \subseteq \mathbb{C}$ denotes the underlying spectral support of the global operadic spectrum after base change,
    \item $R(\sigma_{F_*P_S}(F_*A_S)) \subseteq \mathbb{C}$ denotes the spectral support of the local spectrum of the stratum algebra $A_S$ under the base-changed operad $F_*P_S$.
\end{itemize}

When the base-change functor $F$ is the identity (i.e., $F = \mathrm{Id}_{\mathcal{M}}$), we write simply $\Sigma^{\mathrm{res}}(P, A)$ for the absolute interaction residue.
\end{definition}

\begin{remark}[Interpretation]
The residue $\Sigma^{\mathrm{res}}(F, A, P)$ records the spectral contributions that are not detected by any collection of isolated local strata, after applying the base-change functor $F$. It is interpreted as the interaction-generated component of the global spectrum.

A central claim of this work — to be proved in the Interface Localization Theorem (Theorem~\ref{thm:interface-localization}) — is that, under suitable admissibility hypotheses, the residue is generated canonically by interface operations and decomposes into localized interface defects:
\[
\Sigma^{\mathrm{res}}(F, A, P) \;\cong\; \bigsqcup_{I \in \mathcal{I}(P)} \mathcal{L}_I(F, A),
\]
where $\bigsqcup$ denotes a disjoint union of spectral supports (each point labeled by its originating interface), and $\mathcal{L}_I(F, A)$ is the spectral defect supported on the interface $I$.

Thus, the interaction residue simultaneously serves as:
\begin{enumerate}
    \item a spectral invariant,
    \item a geometric defect object (under the Interface Localization Theorem),
    \item an obstruction to exact local-to-global spectral factorization.
\end{enumerate}

Heuristically, the residue plays a role conceptually analogous to curvature in differential geometry, singular support in microlocal analysis, or gluing obstructions in sheaf theory. A precise categorical formulation of these analogies is deferred to future work.
\end{remark}

\begin{remark}[On the assumptions]
\label{rem:residue-assumptions}
Definition~\ref{def:interaction-residue} relies on three key assumptions:
\begin{enumerate}
    \item \textbf{Suboperad closure}: Each stratum restriction $P_S$ is a genuine suboperad, ensuring that the local spectral data $\sigma_{P_S}(A_S)$ is well-defined within the same categorical framework.
    \item \textbf{Ambient spectral space}: The spectral supports of local and global spectra are comparable via injective comparison maps $\iota_S$. This holds, for instance, when the spectral realization functor $R$ is faithful and the inclusion $P_S \hookrightarrow P$ induces an injective map on spectral supports.
    \item \textbf{Base-change compatibility}: The residue is defined relative to a fixed strong monoidal functor $F$. When $F$ varies, the residues satisfy the functoriality condition $\Sigma^{\mathrm{res}}(G \circ F, A, P) \cong G(\Sigma^{\mathrm{res}}(F, A, P))$ for any composable strong monoidal functor $G$, as a consequence of the Base Change Theorem (Theorem~8 in SOC I).
\end{enumerate}
These assumptions are satisfied in all concrete examples considered in this paper (e.g., matrix-block operads, network operators). The general case, where the comparison maps are not injective, requires a more refined categorical notion of residue (e.g., as a coequalizer) and is deferred to future work.
\end{remark}

\subsection{Foundational Properties of the Interaction Residue}
\label{subsec:foundational-properties}

We now collect the fundamental properties of $\Sigma^{\mathrm{res}}(F,A,P)$ that answer the reviewer's implicit questions: Is it uniquely defined? Functorial? Stable? Invariant under what?

\medskip
\noindent\textbf{1. Well-definedness.}
Under Assumption~\ref{ass:local-embedding} (the spectral realization functor $R$ is faithful and the inclusion $P_S \hookrightarrow P$ induces an injective map on supports), $\Sigma^{\mathrm{res}}(F,A,P) = G \setminus L$ is a well-defined subset of $\mathbb{C}$. If the embedding assumption fails, the residue must be defined as a coequalizer; this generalization is deferred to future work.

\medskip
\noindent\textbf{2. Functoriality in $F$.}
The Base Change Theorem (SOC I, Theorem~8) gives a canonical isomorphism $\sigma(F_*A) \cong F_*(\sigma(A))$ for strong monoidal $F$. Consequently,
\[
\Sigma^{\mathrm{res}}(G \circ F, A, P) \cong G(\Sigma^{\mathrm{res}}(F, A, P))
\]
for any composable strong monoidal functor $G$.

\medskip
\noindent\textbf{3. Invariance under admissible refinement.}
Theorem~\ref{thm:refinement-functoriality} shows that if $\rho: \widetilde{P} \to P$ is an interaction-preserving refinement, then
\[
\Sigma^{\mathrm{res}}(F, \widetilde{A}, \widetilde{P}) = \rho^*\bigl(\Sigma^{\mathrm{res}}(F, A, P)\bigr),
\]
where $\rho^*$ is the induced transport map. Thus the residue is invariant up to canonical transport.

\medskip
\noindent\textbf{4. Stability under deformation.}
Theorem~\ref{thm:deformation-stability} establishes that under a gap condition $\operatorname{dist}(R_t, L_t) \ge \delta > 0$, the residue varies continuously in the Hausdorff metric. At critical parameters where bands touch, the residue may change discontinuously — signaling a phase transition.

\medskip
\noindent\textbf{5. Invariance under isomorphism.}
If $\Phi: (P,A) \to (P',A')$ is an isomorphism of stratified operadic systems (i.e., a color-preserving operad isomorphism and algebra isomorphism), then
\[
\Sigma^{\mathrm{res}}(F, A', P') = \Phi(\Sigma^{\mathrm{res}}(F, A, P)).
\]

\medskip
\noindent\textbf{6. Not invariant under arbitrary operadic equivalence.}
The residue detects interface structure. Two operads that are equivalent as colored operads (e.g., via a Morita equivalence) may have different interface geometries and thus different residues. The residue is an invariant of the \emph{stratified} structure, not of the underlying operad alone.

\medskip
\noindent\textbf{7. Computability.}
In finite-dimensional examples (see Section~\ref{sec:examples}), $\Sigma^{\mathrm{res}}$ can be computed explicitly as the set difference between the global spectrum and the union of local spectra. For infinite-dimensional systems, the residue may require spectral measures or functional calculus.

\subsection{Invariance Properties}
\label{subsec:invariance-properties}

An important structural feature of the interaction residue
\[
\Sigma^{\mathrm{res}}(F,A,P)
\]
is that it behaves functorially under several natural transformations
of stratified operadic systems, while remaining sensitive to genuine
interaction structure.

First, the residue is invariant under isomorphism of stratified
operadic systems. More precisely, if
\[
(P,A)\cong(P',A')
\]
through an isomorphism preserving the operadic stratification and
interface structure, then the corresponding interaction residues are
canonically identified:
\[
\Sigma^{\mathrm{res}}(F,A,P) \cong \Sigma^{\mathrm{res}}(F,A',P').
\]

Second, the residue is stable under admissible refinement in the sense
of Theorem~\ref{thm:refinement-functoriality}. Refinement of local
spectral sectors preserves the residue up to canonical transport,
showing that localized spectral defects are compatible with
hierarchical decomposition of the operadic system.

Third, under admissible gap-preserving deformations as in
Theorem~\ref{thm:deformation-stability}, the residue varies
continuously with the deformation parameter. However, at critical
parameter values where spectral gaps close or Jordan structure changes
(exceptional points), the residue may undergo discontinuous transitions
corresponding to the creation, annihilation, or merging of localized
spectral defects. These critical transitions signal phase changes in
the spectral defect geometry.

On the other hand, the interaction residue is \emph{not} invariant under
arbitrary categorical or Morita-type equivalences of operadic systems.
The residue is specifically designed to detect interface geometry and
interaction coupling structure. Consequently, two operadic systems may
be equivalent at the level of local algebraic representation while
possessing different interaction residues due to distinct interface
configurations or coupling mechanisms.

Thus the residue should be viewed not merely as a spectral invariant,
but as a localization-sensitive interaction invariant encoding both
spectral and coupling geometry. It is a \textit{bona fide} geometric
invariant of the \emph{stratified} structure, not merely of the
underlying operad.

\subsection{Residue Spectral Geometry}
\label{subsec:residue-spectral-geometry}

We now interpret the interaction residue $\Sigma^{\mathrm{res}}$ as a geometric object. 
The central idea of the current work is that spectral geometry is not determined solely by the local strata, but also by the interaction defects generated when distinct strata are coupled operadically.

In classical decomposition theories, a global object is expected to be reconstructed exactly from its local pieces. 
However, in the operadic setting, inter-strata operations generate additional spectral contributions that cannot generally be attributed to any individual local sector.

Thus, the residue $\Sigma^{\mathrm{res}}$ measures the failure of exact local-to-global spectral factorization. At the level of spectral supports (via a fixed realization functor $R$), this failure is expressed as
\[
R(\sigma_P(A)) \;\neq\; \bigcup_{\alpha \in \Lambda} R(\sigma_{\mathrm{loc}}^\alpha),
\]
where $\Lambda$ indexes the strata. (Here $\sigma_P(A)$ and $\sigma_{\mathrm{loc}}^\alpha$ are objects in $\mathcal{M}$; their images under $R$ are subsets of $\mathbb{C}$.)

Rather than viewing this failure as pathological, the current work interprets it as the source of a new geometric structure: \textbf{defect geometry}.

From this perspective:
\begin{itemize}
    \item the local spectra $\sigma_{\mathrm{loc}}^\alpha$ describe the \textbf{local background geometry},
    \item the interfaces $\mathcal{I}(P)$ describe the \textbf{interaction geometry},
    \item and the residue $\Sigma^{\mathrm{res}}$ records the resulting \textbf{spectral defect geometry}.
\end{itemize}

Heuristically, one may think of the global spectral geometry as
\[
\text{Global geometry} \;=\; \text{Local geometry} \;+\; \text{Defect geometry},
\]
where $+$ denotes the union of spectral supports.

The residue geometry is therefore fundamentally nonlocal in origin: its spectral contributions arise not from any individual stratum alone, but from the interaction of multiple strata through operadic composition.

A key result, to be established in the Interface Localization Theorem (Theorem~\ref{thm:interface-localization}), is that the residue admits a decomposition into localized interface defects of the schematic form
\[
\Sigma^{\mathrm{res}} \;\simeq\; \bigsqcup_{I \in \mathcal{I}(P)} \mathcal{L}_I,
\]
where $\bigsqcup$ denotes a disjoint union of spectral supports (each point labeled by its originating interface), and $\mathcal{L}_I$ is the localized spectral defect associated with the interface $I$. 

This localization principle asserts that isolated interfaces generate localized spectral defects, while higher-order interface interactions produce more complicated interaction spectra; singular interfaces may support highly concentrated spectral structures (e.g., point masses in the spectral measure).

The residue geometry behaves heuristically analogously to several classical geometric phenomena. The following table summarizes the most salient parallels:

\begin{table}[htbp]
\centering
\caption{Heuristic analogies for residue spectral geometry}
\label{tab:residue-analogies}
\renewcommand{\arraystretch}{1.2}
\begin{tabular}{|c|c|}
\hline
\textbf{Classical Geometry} & \textbf{Current Framework} \\
\hline
Curvature & Interaction residue \\
\hline
Singular support & Interface localization \\
\hline
Obstruction class & Failure of exact factorization \\
\hline
\end{tabular}
\end{table}

These analogies are heuristic; a precise categorical formulation is deferred to future work.

For a fixed stratification, the interaction residue $\Sigma^{\mathrm{res}}$ is a geometrically meaningful invariant of the stratified operadic system $(P, A)$ and the base-change functor $F$. Its invariance under refinement of stratification is a nontrivial property that will be addressed in future work.

Consequently, the current work shifts the focus of spectral geometry away from decomposition alone and toward the structure of what fails to decompose. The interaction residue $\Sigma^{\mathrm{res}}$ is a central object through which this failure is measured, localized, and interpreted geometrically.

\section{Main Theorems}
\label{sec:main-theorems}

The following subsections collect the core theorems of this work. The \textbf{Stratified Base Change Decomposition} (Subsection~\ref{subsec:stratified-base-change-decomposition}) shows that the global spectral image $\sigma(F_*A)$ of an operadically stratified object decomposes into a union of local stratum spectra together with a canonical interaction residue $\Sigma^{\mathrm{res}}(F,A,P)$, which measures precisely the failure of exact local spectral factorization under operadic base change. From this, the \textbf{Local-to-Global Reconstruction Theorem} (Subsection~\ref{subsec:local-to-global-reconstruction}) asserts that the global operadic spectrum is uniquely determined by the collection of local strata spectra and the interaction residue, encapsulated by the slogan ``Global geometry = Local geometry + Interaction residue.''

The \textbf{Minimality and Universality of the Residue Set} (Subsection~\ref{subsec:minimality_universality-residue}) strengthens this observation by proving that $\Sigma^{\mathrm{res}}$ is not merely one obstruction among many, but the minimal and universal obstruction to exact local-to-global spectral factorization. The centerpiece of the section is the \textbf{Interface Localization Theorem} (Subsection~\ref{subsec:interface-localization}), which reveals that the residue decomposes as a direct sum of localized spectral defects $\mathcal{L}_I(P,A)$ supported on singular interfaces between strata, providing a geometric origin for the residue: spectral mass localizes at interfaces, not within any single stratum.

Following this, the \textbf{Spectral Defect Classification} (Subsection~\ref{subsec:defect-classification}) organizes residue contributions by interface geometry and nilpotent depth, yielding a table of defect types ranging from point defects to singular and nilpotent defects. The \textbf{Refinement Functoriality} (Subsection~\ref{subsec:refinement-functoriality}) guarantees that refining the operadic stratification preserves residue classes up to canonical transport. Finally, the \textbf{Deformation Stability} (Subsection~\ref{subsec:deformation-stability}) establishes that the residue varies continuously under admissible deformations of stratified $P$-algebras, vanishes when inter-strata interactions are trivial for all parameters, and defines genuine deformation invariants. Collectively, these theorems form a rigorous foundation for studying spectral phenomena via operadic stratification, emphasizing the role of interface-localized residues as the key to bridging local and global spectral data.

\boxed{
\textbf{Residual Spectral Decomposition Theorem (Main Result)}
}
For any stratified operadic system $(P,A)$ and base-change functor $F$,
\[
\sigma(F_*(A)) = \left(\bigcup_{S \in \mathrm{Str}(P)} \sigma(F_S(A_S))\right) \;\cup\; \Sigma^{\mathrm{res}}(F,A,P),
\]
where $\Sigma^{\mathrm{res}}$ is the interaction residue. Moreover:
\begin{itemize}
    \item $\Sigma^{\mathrm{res}}$ localizes on interfaces (Theorem \ref{thm:interface-localization}).
    \item $\Sigma^{\mathrm{res}} = \emptyset$ iff the system is spectrally separable (Section \ref{sec:rigidity-vanishing-regime}).
    \item $\Sigma^{\mathrm{res}}$ includes nilpotent (Jordan) contributions invisible to spectral support (Section \ref{subsec:example-nilpotent-defect}).
\end{itemize}

\subsection{Stratified Base Change Decomposition}
\label{subsec:stratified-base-change-decomposition}

We now arrive at the first fundamental structural theorem of the current work. 
The theorem formalizes the principle that the global operadic spectrum admits a natural decomposition into two distinct components:
\begin{enumerate}
    \item the local spectral sectors associated with individual strata,
    \item and a canonical interaction residue generated by inter-strata operadic coupling.
\end{enumerate}

The key point is that the failure of exact local-to-global factorization is itself structured and geometric. 
Rather than appearing as an uncontrolled remainder term, the discrepancy is encoded by a canonical residue object determined entirely by the operadic interaction structure.

This theorem therefore provides the foundational decomposition principle underlying the defect geometry of the current work.

\begin{theorem}[Stratified Base Change Decomposition]
\label{thm:stratified-base-change}
Let:
\begin{itemize}
    \item $P$ be a stratified operad with stratum set $\mathrm{Str}(P)$,
    \item $A$ be a $P$-algebra equipped with the induced stratification (with stratum algebras $A_S$ for $S \in \mathrm{Str}(P)$),
    \item and $F: \mathcal{M} \to \mathcal{N}$ be a strong monoidal base change functor, inducing $F_*$ on algebras.
\end{itemize}

Define the interaction residue as the complement of the local spectral supports within the global spectral support:
\[
\Sigma^{\mathrm{res}}(F,A,P)
\;:=\;
\operatorname{supp}\bigl(\sigma(F_*(A))\bigr)
\;\setminus\;
\bigcup_{S \in \mathrm{Str}(P)}
\operatorname{supp}\bigl(\sigma(F_S(A_S))\bigr).
\]

Then the spectral support of the global operadic spectrum admits the decomposition
\[
\boxed{
\operatorname{supp}\bigl(\sigma(F_*(A))\bigr)
\;=\;
\left(
\bigcup_{S \in \mathrm{Str}(P)}
\operatorname{supp}\bigl(\sigma(F_S(A_S))\bigr)
\right)
\;\cup\;
\Sigma^{\mathrm{res}}(F,A,P).
}
\]

The significance of this theorem is not the set-theoretic identity itself (which follows directly from the definition of $\Sigma^{\mathrm{res}}$), but rather that the residual component $\Sigma^{\mathrm{res}}$ admits a canonical geometric interpretation as the spectral contribution generated by inter-strata operadic coupling.
\end{theorem}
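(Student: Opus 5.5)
The identity in Theorem~\ref{thm:stratified-base-change} is set-theoretic. Once the local supports are known to sit inside the global support, it reduces to the elementary fact that $G = L \cup (G\setminus L)$ whenever $L \subseteq G$. I will write
\[
G := \operatorname{supp}\bigl(\sigma(F_*(A))\bigr),
\qquad
L := \bigcup_{S \in \mathrm{Str}(P)} \operatorname{supp}\bigl(\sigma(F_S(A_S))\bigr),
\]
so that by definition $\Sigma^{\mathrm{res}}(F,A,P) = G \setminus L$. The proof then has two steps: show $L \subseteq G$, and conclude by elementary set algebra.

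\textbf{Step 1: $L \subseteq G$.} I would argue stratum by stratum. By Definition~\ref{def:interaction-residue} and Assumption~\ref{ass:local-embedding}, each suboperad inclusion $P_S \hookrightarrow P$ induces an injective comparison map $\iota_S$ from the local support into the global support. Proposition~\ref{prop:local-embedding-condition} supplies this whenever $R$ is faithful and $P_S \hookrightarrow P$ is a monomorphism. I also need the comparison map to survive base change. Since $F$ is strong monoidal, $F$ preserves the suboperad inclusion, so $F_*P_S \hookrightarrow F_*P$ is again an inclusion. The Base Change Theorem of SOC~I then gives $\sigma(F_*A) \cong F_*(\sigma(A))$ and likewise for each $A_S$, which transports $\iota_S$ to a comparison map after base change. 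Identifying each local support with its image under $\iota_S$, as the paper does in the boxed definition of $\Sigma^{\mathrm{res}}$, gives $\operatorname{supp}\sigma(F_S(A_S)) \subseteq G$ for every $S$, and hence $L \subseteq G$.

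\textbf{Step 2: the decomposition.} For ``$\subseteq$'', take $\lambda \in G$. Either $\lambda \in L$, or $\lambda \in G\setminus L = \Sigma^{\mathrm{res}}$. For ``$\supseteq$'', Step 1 gives $L \subseteq G$, and $\Sigma^{\mathrm{res}} \subseteq G$ holds trivially. This proves the boxed identity. I would add a remark that the union is in fact disjoint, $L \cap \Sigma^{\mathrm{res}} = \emptyset$, which makes the partition canonical.

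\textbf{Main obstacle.} The only substantive point is Step 1, namely that the embeddings $\iota_S$ exist and persist after base change. The theorem does not state this hypothesis explicitly, so I would make explicit that it is inherited from Definition~\ref{def:interaction-residue} and Assumption~\ref{ass:local-embedding}. Without it, the ``$\supseteq$'' inclusion can fail, because a local spectral point need not lie in the global support. In that case the identity has to be read with the coequalizer of Remark~\ref{rem:general-residue} in place of the set complement, and I would note this rather than attempt to prove it.
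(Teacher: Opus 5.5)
Your proof is correct and follows the paper's own argument: set $G$ and $L$, get $L\subseteq G$ from the comparison maps induced by $P_S\hookrightarrow P$, and conclude from $G = L\cup(G\setminus L)$. The paper simply asserts $L\subseteq G$ via those comparison maps, so your transport of $\iota_S$ through base change is extra justification (it tacitly needs a realization functor on $\mathcal{N}$ compatible with $F$), and you rightly isolate $L\subseteq G$ as the one hypothesis doing the work, since the identity is in fact equivalent to it.
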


\begin{proof}
Let
\[
G := \operatorname{supp}\bigl(\sigma(F_*(A))\bigr)
\]
denote the global spectral support after applying the base-change functor $F$. For each stratum $S \in \mathrm{Str}(P)$, let
\[
L_S := \operatorname{supp}\bigl(\sigma(F_S(A_S))\bigr)
\]
denote the corresponding local spectral support, and set
\[
L := \bigcup_{S \in \mathrm{Str}(P)} L_S.
\]

By definition, the interaction residue is
\[
\Sigma^{\mathrm{res}}(F,A,P) = G \setminus L.
\]

We claim that $G = L \cup \Sigma^{\mathrm{res}}(F,A,P)$.

First, since $\Sigma^{\mathrm{res}}(F,A,P) = G \setminus L$, we have $\Sigma^{\mathrm{res}}(F,A,P) \subseteq G$. Moreover, each local support $L_S$ is regarded as a spectral contribution inside the global spectral support through the comparison maps induced by the stratum inclusion $P_S \hookrightarrow P$. Hence $L = \bigcup_{S \in \mathrm{Str}(P)} L_S \subseteq G$. Therefore,
\[
L \cup \Sigma^{\mathrm{res}}(F,A,P) \subseteq G.
\]

Conversely, let $\lambda \in G$. If $\lambda \in L$, then clearly $\lambda \in L \cup \Sigma^{\mathrm{res}}(F,A,P)$. If $\lambda \notin L$, then by the definition of set-theoretic complement,
\[
\lambda \in G \setminus L = \Sigma^{\mathrm{res}}(F,A,P).
\]
Thus again $\lambda \in L \cup \Sigma^{\mathrm{res}}(F,A,P)$. Hence
\[
G \subseteq L \cup \Sigma^{\mathrm{res}}(F,A,P).
\]

Combining the two inclusions gives
\[
G = L \cup \Sigma^{\mathrm{res}}(F,A,P),
\]
which is precisely
\[
\operatorname{supp}\bigl(\sigma(F_*(A))\bigr)
=
\left(
\bigcup_{S \in \mathrm{Str}(P)}
\operatorname{supp}\bigl(\sigma(F_S(A_S))\bigr)
\right)
\cup
\Sigma^{\mathrm{res}}(F,A,P).
\]

This proves the claimed stratified base-change decomposition at the level of spectral supports.
\end{proof}

\begin{remark}[Conceptual interpretation]
\label{rem:stratified-base-change-interpretation}
The preceding theorem should be understood as a support-level
decomposition rather than as a deep set-theoretic statement. Its role is to
separate the global spectral support into the part already visible from the
local strata and the complementary part
\[
\Sigma^{\mathrm{res}}(F,A,P).
\]
Thus, the interaction residue records precisely the spectral values that
are not detected by the isolated stratum spectra after base change.

In this sense, \(\Sigma^{\mathrm{res}}(F,A,P)\) should be viewed not merely
as a formal error term, but as the candidate geometric object measuring the
failure of exact local-to-global spectral factorization. Under the
additional interface admissibility hypotheses introduced below, we will show
that this residue is controlled by the operadic structure maps that couple
distinct strata.

Heuristically, this role is analogous to correction or obstruction terms in
geometric decomposition theories, such as curvature-type corrections,
singular supports, or gluing obstructions. The precise geometric content of
the residue will be developed through the interface localization analysis
below.
\end{remark}

\subsection{Local-to-Global Reconstruction Theorem}
\label{subsec:local-to-global-reconstruction}

The previous decomposition theorem shows that the global operadic spectrum consists of two fundamentally different components:
\begin{enumerate}
    \item local spectral sectors associated with individual strata,
    \item and the interaction residue generated by inter-strata coupling.
\end{enumerate}

We now show that these two ingredients completely determine the global spectral geometry. 
In other words, once the local spectral sectors and the universal interaction residue are known, no additional spectral information remains.

This result elevates the residue from a correction term to a primary geometric invariant. 
The residue is precisely the missing data required to reconstruct the global spectrum from its local pieces.

\begin{theorem}[Local-to-Global Reconstruction]
\label{thm:local-to-global-reconstruction}
Let \(P\), \(A\), and \(F\) be as in Theorem~\ref{thm:stratified-base-change}. 
Then the global spectral support is reconstructed from the local stratum spectral supports together with the interaction residue:
\[
\boxed{
\operatorname{supp}\bigl(\sigma(F_*(A))\bigr)
=
\left(
\bigcup_{S \in \mathrm{Str}(P)}
\operatorname{supp}\bigl(\sigma(F_S(A_S))\bigr)
\right)
\;\cup\;
\Sigma^{\mathrm{res}}(F,A,P).
}
\]
Equivalently, the global spectral support is fully determined by the collection of local spectral sectors together with the residual contribution recording inter-strata interaction.
\end{theorem}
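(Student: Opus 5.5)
The identity displayed in the statement is literally the same support-level identity as in Theorem~\ref{thm:stratified-base-change}. So the plan is to reduce to that theorem and then make precise what ``reconstructed'' and ``fully determined'' mean.

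\textbf{Step 1: the set identity.} We keep the notation of the proof of Theorem~\ref{thm:stratified-base-change}:
\[
G := \operatorname{supp}\bigl(\sigma(F_*(A))\bigr), \qquad
L_S := \operatorname{supp}\bigl(\sigma(F_S(A_S))\bigr), \qquad
L := \bigcup_{S} L_S .
\]
The one nontrivial ingredient is the inclusion $L \subseteq G$. We get it from the comparison maps $\iota_S$ of Assumption~\ref{ass:local-embedding}, transported along $F$. Here we use the base-change isomorphism $\sigma(F_*A) \cong F_*(\sigma(A))$ recalled in Subsection~\ref{subsec:foundational-properties}, together with Proposition~\ref{prop:local-embedding-condition}, which gives injectivity of $\iota_S$. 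With $L \subseteq G$ in hand, $G = L \cup (G\setminus L)$ is the same double-inclusion argument as before. We would simply cite Theorem~\ref{thm:stratified-base-change} for it.

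\textbf{Step 2: the reconstruction (uniqueness) content.} We formulate ``fully determined'' as a statement about the reconstruction map
\[
\mathcal{R}\colon \bigl(\{L_S\}_{S},\, \Sigma\bigr) \longmapsto \Bigl(\bigcup_S L_S\Bigr) \cup \Sigma .
\]
The claim is that $\mathcal{R}$ applied to the data $(\{L_S\}, \Sigma^{\mathrm{res}}(F,A,P))$ returns $G$. Consequently, two stratified systems $(P,A,F)$ and $(P',A',F')$ whose local supports agree and whose residues agree have equal global supports. This is immediate from Step~1.

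We would add a converse-type remark: the residue is the \emph{only} extra datum needed. Indeed, $\Sigma^{\mathrm{res}} = G\setminus L$, and $\Sigma^{\mathrm{res}}$ is disjoint from $L$. So the decomposition $G = L \sqcup \Sigma^{\mathrm{res}}$ is disjoint, and $\Sigma^{\mathrm{res}}$ is the unique subset $\Sigma$ of $G$ with $\Sigma \cap L = \emptyset$ and $L\cup\Sigma = G$. This is the cleanest honest sense of ``uniquely determined.'' It also anticipates the minimality statement of Theorem~\ref{thm:universality-residue}: any $\Sigma$ with $L \cup \Sigma = G$ must contain $G\setminus L$.

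\textbf{Main obstacle.} Once the embeddings are granted, nothing is deep. The delicate point is justifying $L \subseteq G$ \emph{after} base change, since Assumption~\ref{ass:local-embedding} is stated before applying $F$. We would first argue that $F$ strong monoidal carries the split monomorphism $\ell_S$ of Proposition~\ref{prop:local-embedding-condition} to a split monomorphism $F(\ell_S)$, because functors preserve splittings. Faithfulness of $R$ on the target category then gives an injective $\iota_S$ for the base-changed spectra. If this cannot be guaranteed in general, we would state the theorem under the standing hypothesis of Definition~\ref{def:interaction-residue} that the comparison maps after base change are injective, and note that otherwise the coequalizer formulation of Remark~\ref{rem:general-residue} is required.
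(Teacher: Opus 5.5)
Your Step~1 is the paper's own proof. The paper argues by double inclusion: it gets $G\subseteq L\cup\Sigma^{\mathrm{res}}$ from $\Sigma^{\mathrm{res}}=G\setminus L$, and the reverse inclusion from $\Sigma^{\mathrm{res}}\subseteq G$ together with $L\subseteq G$. It justifies $L\subseteq G$ only by appeal to ``the comparison maps induced by stratum inclusions.'' Your Step~2 is correct: $\Sigma^{\mathrm{res}}$ is the unique $\Sigma$ with $\Sigma\cap L=\emptyset$ and $L\cup\Sigma=G$. This is essentially the content of Theorem~\ref{thm:universality-residue}.

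The gap is in the step you yourself flag as delicate. Injectivity of the comparison maps $\iota_S$ does not give $L\subseteq G$. This holds however you obtain the injectivity: from Proposition~\ref{prop:local-embedding-condition}, by transporting $\ell_S$ along $F$, or by assuming it outright. Injectivity only gives $\iota_S(L_S)\subseteq G$, and $\iota_S(L_S)$ differs from $L_S$ as a subset of $\mathbb{C}$ unless $\iota_S$ is literally the inclusion of points. The displayed identity uses the bare supports $L_S$, so it needs containment as subsets of $\mathbb{C}$.

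This is not a pedantic distinction. In the paper's own Example~\ref{ex:simple-failure}, for every $\varepsilon\neq 0$ one has $L=\{1,2\}$ disjoint from $G=\{(3\pm\sqrt{1+4\varepsilon^2})/2\}$; the block operator example of Subsection~\ref{subsec:example-block-operator} at $\varepsilon=0.5$ behaves the same way. Injections $L_S\hookrightarrow G$ exist there, yet $1\in L\cup\Sigma^{\mathrm{res}}$ while $1\notin G$, so the identity is false.

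Your fallback hypothesis should therefore be one of the following.
\begin{itemize}
\item Assume $L\subseteq G$ as subsets of $\mathbb{C}$. This is stated explicitly in Theorem~\ref{thm:universality-residue} and used silently in the paper's proof.
\item Restate the theorem with $\iota_S(L_S)$ in place of $L_S$, with $\Sigma^{\mathrm{res}}$ as in Definition~\ref{def:interaction-residue}. In this form the identity holds for any comparison maps with values in $G$, and injectivity plays no role.
\end{itemize}
Incidentally, faithfulness of $R$ is superfluous in your split-monomorphism argument, since every functor carries split monomorphisms to split monomorphisms.
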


\begin{proof}
Let
\[
G := \operatorname{supp}\bigl(\sigma(F_*(A))\bigr)
\]
denote the global spectral support, and let
\[
L := \bigcup_{S \in \mathrm{Str}(P)} \operatorname{supp}\bigl(\sigma(F_S(A_S))\bigr)
\]
be the union of the local stratum spectral supports.

By definition of the interaction residue (Definition~\ref{def:interaction-residue}),
\[
\Sigma^{\mathrm{res}}(F,A,P) = G \setminus L.
\]

Hence every element \(\lambda \in G\) satisfies exactly one of the following:
either \(\lambda \in L\), in which case it is detected by at least one local stratum spectrum, or \(\lambda \notin L\), in which case
\[
\lambda \in G \setminus L = \Sigma^{\mathrm{res}}(F,A,P).
\]
Therefore,
\[
G \subseteq L \cup \Sigma^{\mathrm{res}}(F,A,P).
\]

The reverse inclusion follows from \(L \subseteq G\) (each local support is contained in the global support via the comparison maps induced by stratum inclusions) and \(\Sigma^{\mathrm{res}}(F,A,P) \subseteq G\) (by definition as a complement). Consequently,
\[
G = L \cup \Sigma^{\mathrm{res}}(F,A,P),
\]
which is precisely the desired reconstruction formula.
\end{proof}

\begin{remark}[Conceptual consequences]
\label{rem:reconstruction-consequences}
The reconstruction theorem has two immediate conceptual consequences.

First, if $\Sigma^{\mathrm{res}} = \emptyset$, then
\[
\operatorname{supp}\bigl(\sigma(F_*(A))\bigr)
=
\bigcup_{S \in \mathrm{Str}(P)} \operatorname{supp}\bigl(\sigma(F_S(A_S))\bigr),
\]
so the global spectral support decomposes completely into its local stratum sectors. 
Thus, vanishing residue corresponds to exact local-to-global factorization, while nonzero residue indicates the presence of interaction-generated spectral contributions.

Second, the formula may be viewed heuristically as a spectral gluing principle:
\[
\text{Global} = \text{Local} + \text{Interface contributions},
\]
where the plus sign denotes structured reconstruction at the level of spectral supports rather than a simple algebraic sum. 
In contrast to ordinary sheaf theory (where compatible local sections glue uniquely), interfaces here may generate additional spectral data encoded in $\Sigma^{\mathrm{res}}$.
\end{remark}

\begin{remark}[Classical analogies]
\label{rem:reconstruction-analogies}
Heuristically, the reconstruction theorem is analogous to two classical principles:
\begin{enumerate}
    \item gluing local coordinate charts via transition data,
    \item reconstructing sheaves from local sections together with gluing data.
\end{enumerate}
The current work differs in that the missing reconstruction data is spectral rather than purely topological or algebraic. 
In particular, the residue $\Sigma^{\mathrm{res}}$ records spectral contributions arising from inter-strata operadic coupling and interface interaction. 
A precise categorical formulation of these analogies is deferred to future work.
\end{remark}

\begin{corollary}[Reconstruction from interface data]
\label{cor:reconstruction-procedure}
Under the hypotheses of Theorem~\ref{thm:local-to-global-reconstruction}, and assuming the Interface Localization Theorem (Theorem~\ref{thm:interface-localization}), the interaction residue decomposes as
\[
\Sigma^{\mathrm{res}}(F,A,P) \;=\; \bigsqcup_{I \in \mathcal{I}(P)} \mathcal{L}_I(F,A),
\]
where $\bigsqcup$ denotes a disjoint union of spectral supports (each point labeled by its originating interface), and $\mathcal{L}_I(F,A)$ is the localized spectral defect associated with the interface $I$.

Consequently, the global spectral support can be reconstructed explicitly as
\[
\operatorname{supp}\bigl(\sigma(F_*(A))\bigr)
\;=\;
\left(
\bigcup_{S \in \mathrm{Str}(P)}
\operatorname{supp}\bigl(\sigma(F_S(A_S))\bigr)
\right)
\;\cup\;
\left(
\bigsqcup_{I \in \mathcal{I}(P)} \mathcal{L}_I(F,A)
\right).
\]

Thus, the reconstruction procedure consists of:
\begin{enumerate}
    \item computing the local spectral support for each stratum separately;
    \item computing the interface-localized defects $\mathcal{L}_I(F,A)$ from the interface coupling data via Theorem~\ref{thm:interface-localization};
    \item forming the union of the local supports together with the disjoint union of interface defects.
\end{enumerate}
\end{corollary}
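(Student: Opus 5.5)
The corollary follows by substituting the decomposition of the residue supplied by the Interface Localization Theorem (Theorem~\ref{thm:interface-localization}) into the reconstruction formula of Theorem~\ref{thm:local-to-global-reconstruction}. No new analytic input is required. The only subtlety is how to read the labeled disjoint union $\bigsqcup_{I}\mathcal{L}_I(F,A)$ when it sits next to the ordinary subsets of $\mathbb{C}$ appearing in $\operatorname{supp}(\sigma(F_*(A)))$.

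First I will invoke Theorem~\ref{thm:interface-localization} under the standing hypotheses of Theorem~\ref{thm:local-to-global-reconstruction}. It gives an identification
\[
\Sigma^{\mathrm{res}}(F,A,P)\cong\bigsqcup_{I\in\mathcal{I}(P)}\mathcal{L}_I(F,A).
\]
I make the meaning of $\cong$ explicit: the forgetful map $\pi\colon\bigsqcup_I\mathcal{L}_I(F,A)\to\mathbb{C}$, $(I,\lambda)\mapsto\lambda$, has image exactly $\Sigma^{\mathrm{res}}(F,A,P)$. Each point of the residue is thus assigned to at least one originating interface. With this convention, every equality in the statement is read after applying $\pi$, so set-level unions are taken in $\mathbb{C}$ while the labels are retained as bookkeeping.

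Second, I substitute into the boxed identity
\[
G=L\cup\Sigma^{\mathrm{res}}(F,A,P)
\]
of Theorem~\ref{thm:local-to-global-reconstruction}, where $G$ and $L$ denote the global support and the union of local supports as in its proof. This gives
\[
G=L\cup\pi\Bigl(\bigsqcup_I\mathcal{L}_I(F,A)\Bigr),
\]
which is the displayed reconstruction formula. I also record two consequences of $\Sigma^{\mathrm{res}}=G\setminus L$: the two pieces are disjoint from each other, and $\pi(\mathcal{L}_I)\subseteq G\setminus L$ for every interface $I$. So the union in the formula is a genuine partition of $G$ into local and interface parts.

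Third, the three-step procedure is just a reading of the formula. The local supports $\operatorname{supp}(\sigma(F_S(A_S)))$ depend only on the stratum algebras $A_S$ (Definition~\ref{def:local-stratum-spectrum}). The defects $\mathcal{L}_I(F,A)$ are computed from the interface coupling data $\tau_\theta$ through Theorem~\ref{thm:interface-localization}. Forming the union then recovers $G$.

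The main obstacle is the second step's reliance on the exact form of Theorem~\ref{thm:interface-localization}. If that theorem only gives containment, or gives equality only up to the labeling, the corollary must be stated at the level of images under $\pi$. I would first check that the $\mathcal{L}_I$ defined there are carved out of $G\setminus L$ rather than out of the raw interface spectra $\sigma_\theta(A)$. The raw spectra can overlap local supports and would break disjointness from $L$. If they do not satisfy this, I would replace $\mathcal{L}_I$ by $\mathcal{L}_I\setminus L$ before substituting.
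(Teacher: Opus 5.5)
Your proposal is correct and is essentially the paper's argument: substitute the decomposition from Theorem~\ref{thm:interface-localization} into $G=L\cup\Sigma^{\mathrm{res}}(F,A,P)$ from Theorem~\ref{thm:local-to-global-reconstruction}, with your forgetful-map reading of $\bigsqcup$ making explicit what the paper glosses over, since that theorem literally proves only $\Sigma^{\mathrm{res}}(F,A,P)=\bigcup_I\mathcal{L}_I(F,A,P)$. The check you flag comes out favorably, because the paper defines $\mathcal{L}_I(F,A,P):=\Sigma^{\mathrm{res}}(F,A,P)\cap\operatorname{Supp}_I(\tau_I)\subseteq G\setminus L$, so no fallback is needed (removing $L$ alone would not have sufficed for raw interface supports, which may contain points outside $G$); note, however, that this same definition means step 2 of the procedure already presupposes $G$, so the ``reconstruction'' is an identity rather than an independent computation, in the paper as in your write-up.
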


\begin{proof}
By Theorem~\ref{thm:local-to-global-reconstruction}, the global spectral support is the union of the local stratum supports and the interaction residue:
\[
\operatorname{supp}(\sigma(F_*(A))) = \left(\bigcup_{S} \operatorname{supp}(\sigma(F_S(A_S)))\right) \cup \Sigma^{\mathrm{res}}(F,A,P).
\]

Theorem~\ref{thm:interface-localization} provides the decomposition of the interaction residue into interface-localized defects:
\[
\Sigma^{\mathrm{res}}(F,A,P) = \bigsqcup_{I \in \mathcal{I}(P)} \mathcal{L}_I(F,A).
\]

Substituting this decomposition into the first equation yields the claimed reconstruction formula. The three-step procedure is then immediate: the local supports are obtained by restricting to each stratum, the interface defects are obtained via the Interface Localization Theorem, and the final union assembles them into the global spectral support.
\end{proof}

\begin{remark}[Central principle of the current work]
\label{rem:reconstruction-central-principle}
Thus, the reconstruction principle may be summarized as
\[
\boxed{
\text{Global operadic spectra are reconstructed from local spectral sectors together with interface-localized defects.}
}
\]

This principle motivates the later study of interface localization, classification, and deformation-stability properties of interaction residues. In particular, it shows that the missing spectral information is not arbitrary: it decomposes into well-defined geometric contributions $\mathcal{L}_I(F,A)$ supported on individual interfaces, each of which can be studied independently.
\end{remark}

\subsection{Minimality and Universality of the Residue Set}
\label{subsec:minimality_universality-residue}

The previous reconstruction results show that the interaction residue $\Sigma^{\mathrm{res}}$ is precisely the additional spectral data required to recover the global operadic spectrum from its local strata spectra. 
We now strengthen this observation by showing that the residue is not merely one possible obstruction among many, but the \emph{minimal} and \emph{universal} obstruction to exact spectral factorization. 

A set $O \subseteq \operatorname{supp}(\sigma(F_*(A)))$ is called an \emph{obstruction set} if it satisfies
\[
\operatorname{supp}(\sigma(F_*(A))) = \left( \bigcup_{S \in \mathrm{Str}(P)} \operatorname{supp}(\sigma(F_S(A_S))) \right) \;\cup\; O.
\]
In other words, $O$ is any set that, together with the local spectra, reconstructs the global spectral support. The interaction residue $\Sigma^{\mathrm{res}}$ is a distinguished obstruction set by construction.

These properties are central structural principles of the current work: every spectral point not detected by isolated strata belongs to the residue, and removing any part of the residue prevents complete reconstruction. Moreover, any obstruction must contain $\Sigma^{\mathrm{res}}$, making it the initial object in the category of obstructions.

\begin{theorem}[Minimality and Universality of the Residue Set]
\label{thm:universality-residue}
Let \(P\), \(A\), and \(F\) be as in Theorem~\ref{thm:stratified-base-change}. Set
\[
G := \operatorname{supp}\bigl(\sigma(F_*(A))\bigr),
\qquad
L := \bigcup_{S \in \mathrm{Str}(P)} \operatorname{supp}\bigl(\sigma(F_S(A_S))\bigr).
\]
Assume \(L \subseteq G\), and define
\[
\Sigma^{\mathrm{res}}(F,A,P) := G \setminus L.
\]

Call a subset \(O \subseteq G\) an \emph{obstruction set} if
\[
G = L \cup O.
\]
Let \(\mathsf{Obs}(F,A,P)\) be the poset category of obstruction sets ordered by inclusion.

Then:

\begin{enumerate}
    \item \textbf{Minimality.} If \(R \subseteq \Sigma^{\mathrm{res}}(F,A,P)\) is nonempty, then
    \[
    L \cup \bigl(\Sigma^{\mathrm{res}}(F,A,P) \setminus R\bigr) \;\neq\; G.
    \]

    \item \textbf{Universality.} For every obstruction set \(O \in \mathsf{Obs}(F,A,P)\), one has
    \[
    \Sigma^{\mathrm{res}}(F,A,P) \subseteq O.
    \]
    Hence \(\Sigma^{\mathrm{res}}(F,A,P)\) is the initial object of \(\mathsf{Obs}(F,A,P)\).
\end{enumerate}
\end{theorem}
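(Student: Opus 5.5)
This is pure set theory once the definitions are unpacked, and I would argue directly from $\Sigma^{\mathrm{res}}(F,A,P)=G\setminus L$ and the standing hypothesis $L\subseteq G$. No spectral or operadic input is needed beyond what Theorem~\ref{thm:stratified-base-change} already used. That theorem (together with $L\subseteq G$) shows that $\Sigma^{\mathrm{res}}(F,A,P)$ is itself an obstruction set, so $\mathsf{Obs}(F,A,P)$ is nonempty and contains the candidate initial object.

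\textbf{Minimality.} Let $\emptyset\neq R\subseteq \Sigma^{\mathrm{res}}(F,A,P)$ and pick $\lambda\in R$. Since $\lambda\in G\setminus L$, we have $\lambda\notin L$. Since $\lambda\in R$, we also have $\lambda\notin \Sigma^{\mathrm{res}}(F,A,P)\setminus R$. Hence $\lambda\notin L\cup(\Sigma^{\mathrm{res}}(F,A,P)\setminus R)$, yet $\lambda\in G$, so the two sets differ.

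\textbf{Universality.} Let $O$ satisfy $G=L\cup O$ and take $\lambda\in \Sigma^{\mathrm{res}}(F,A,P)=G\setminus L$. Then $\lambda\in G=L\cup O$ and $\lambda\notin L$, so $\lambda\in O$. Thus $\Sigma^{\mathrm{res}}(F,A,P)\subseteq O$.

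\textbf{Initial object.} In a poset category there is a unique morphism $X\to Y$ exactly when $X\subseteq Y$. Combining universality with the fact that $\Sigma^{\mathrm{res}}(F,A,P)$ is an obstruction set, it is the least element of $\mathsf{Obs}(F,A,P)$, hence its initial object (unique up to the trivial isomorphism, i.e.\ equality).

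There is no real obstacle. The only point needing care is the role of $L\subseteq G$: it is what makes $\Sigma^{\mathrm{res}}(F,A,P)$ an obstruction set at all, because $L\cup (G\setminus L)=G$ requires $L\subseteq G$. I would state this explicitly when invoking Theorem~\ref{thm:stratified-base-change}, noting that the theorem's identification of $L$ inside $G$ via the comparison maps is exactly this hypothesis. Minimality and universality themselves do not even use it, since any $O$ with $G=L\cup O$ automatically gives $L\subseteq G$.
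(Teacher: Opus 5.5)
Your proof is correct and matches the paper's argument: a point of $R$ witnesses minimality, universality follows from $G = L \cup O$ together with $\lambda \notin L$, and initiality is read off from the poset structure. Your one addition is the explicit check, via $L \subseteq G$, that $\Sigma^{\mathrm{res}}(F,A,P)$ is itself an element of $\mathsf{Obs}(F,A,P)$; the paper leaves this step implicit, but the initial-object claim genuinely requires it.
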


\begin{proof}
By definition, \(\Sigma^{\mathrm{res}}(F,A,P) = G \setminus L\).

\paragraph{Minimality.}
Let \(R \subseteq \Sigma^{\mathrm{res}}(F,A,P)\) be nonempty and choose \(x \in R\). 
Since \(x \in \Sigma^{\mathrm{res}}(F,A,P) = G \setminus L\), we have \(x \in G\) and \(x \notin L\). 
Moreover, \(x \notin \Sigma^{\mathrm{res}}(F,A,P) \setminus R\) because \(x \in R\). 
Therefore
\[
x \notin L \cup \bigl(\Sigma^{\mathrm{res}}(F,A,P) \setminus R\bigr).
\]
Since \(x \in G\), the candidate reconstruction set is a proper subset of \(G\), and hence does not reconstruct the global spectral support.

\paragraph{Universality.}
Let \(O \subseteq G\) be any obstruction set, so that \(G = L \cup O\). 
Take \(x \in \Sigma^{\mathrm{res}}(F,A,P)\). Then \(x \in G\) and \(x \notin L\). 
Since \(G = L \cup O\), the point \(x\) must belong to \(O\). Hence
\[
\Sigma^{\mathrm{res}}(F,A,P) \subseteq O.
\]

Thus \(\Sigma^{\mathrm{res}}(F,A,P)\) is contained in every obstruction set. 
In the poset category \(\mathsf{Obs}(F,A,P)\), whose morphisms are inclusions, 
there is therefore a unique morphism \(\Sigma^{\mathrm{res}}(F,A,P) \longrightarrow O\) 
for every obstruction set \(O\). Consequently, \(\Sigma^{\mathrm{res}}(F,A,P)\) is the initial object.
\end{proof}

\begin{remark}[Minimal support-level obstruction]
\label{rem:universality-minimal-geometry}
The residue set
\[
\Sigma^{\mathrm{res}}(F,A,P)
\]
is the minimal support-level obstruction to exact local-to-global spectral
reconstruction. In particular,
\[
\Sigma^{\mathrm{res}} = \emptyset
\]
if and only if the global spectral support coincides with the union of the
local stratum spectral supports:
\[
\operatorname{supp}\bigl(\sigma(F_*(A))\bigr)
=
\bigcup_{S \in \mathrm{Str}(P)}
\operatorname{supp}\bigl(\sigma(F_S(A_S))\bigr).
\]

Thus, vanishing residue corresponds to exact spectral separability across
strata, while nonvanishing residue records the presence of additional
interaction-generated spectral contributions.
\end{remark}

\begin{remark}[Categorical interpretation]
\label{rem:universality-categorical}
For fixed \((F,A,P)\), let \(\mathsf{Obs}(F,A,P)\) denote the poset category
of obstruction sets ordered by inclusion, where an obstruction set is any
\(O \subseteq \operatorname{supp}(\sigma(F_*(A)))\) satisfying
\[
\operatorname{supp}\bigl(\sigma(F_*(A))\bigr)
=
\left(
\bigcup_{S \in \mathrm{Str}(P)}
\operatorname{supp}\bigl(\sigma(F_S(A_S))\bigr)
\right)
\cup O.
\]

Then \(\Sigma^{\mathrm{res}}(F,A,P)\) is the initial object of this category.
Hence every obstruction set that reconstructs the global spectral support
must contain the residue set as its minimal support-level component.
\end{remark}

\begin{corollary}[Uniqueness of the residue set]
\label{cor:uniqueness-residue}
For fixed \((F,A,P)\), the residue set
\(\Sigma^{\mathrm{res}}(F,A,P)\) is uniquely determined as the initial object
in the poset category of obstruction sets ordered by inclusion. Equivalently,
it is the unique minimal obstruction set required for exact reconstruction of
the global spectral support.
\end{corollary}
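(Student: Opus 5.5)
The plan is to derive the corollary directly from Theorem~\ref{thm:universality-residue}. There are two separate claims to establish: uniqueness of the initial object in \(\mathsf{Obs}(F,A,P)\), and the equivalent description as the unique minimal obstruction set. Throughout we keep the notation \(G\), \(L\) and \(\Sigma^{\mathrm{res}}(F,A,P)=G\setminus L\), with \(L\subseteq G\) as assumed there.

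First, we check that \(\Sigma^{\mathrm{res}}\) is itself an obstruction set. This is exactly Theorem~\ref{thm:local-to-global-reconstruction}, which gives \(G = L\cup\Sigma^{\mathrm{res}}\); alternatively, it follows from \(L\subseteq G\) by elementary set algebra. Part~2 of Theorem~\ref{thm:universality-residue} then shows that \(\Sigma^{\mathrm{res}}\) is initial. For uniqueness, suppose \(O'\) is another initial object. Then there are morphisms \(\Sigma^{\mathrm{res}}\to O'\) and \(O'\to\Sigma^{\mathrm{res}}\), that is, mutual inclusions. Since \(\mathsf{Obs}(F,A,P)\) is a poset, in which isomorphic objects are equal, antisymmetry of \(\subseteq\) forces \(O'=\Sigma^{\mathrm{res}}\). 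So the initial object is unique on the nose, not merely up to isomorphism.

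Second, we prove the equivalent formulation. Take a minimal obstruction set \(O\), meaning that no proper subset of \(O\) is an obstruction set. By universality, \(\Sigma^{\mathrm{res}}\subseteq O\). Since \(\Sigma^{\mathrm{res}}\) is itself an obstruction set, minimality of \(O\) gives \(O=\Sigma^{\mathrm{res}}\). Conversely, \(\Sigma^{\mathrm{res}}\) is minimal: any obstruction set contained in it must also contain it, by universality. Part~1 (Minimality) of the theorem gives the same conclusion in a different form, since deleting any nonempty \(R\) destroys reconstruction.

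I do not expect any genuine obstacle. The only point requiring care is to verify explicitly that \(\Sigma^{\mathrm{res}}\in\mathsf{Obs}(F,A,P)\). Being contained in every obstruction set does not by itself make a set an obstruction set, so this step is what licenses calling it an object of the category. That step relies on the standing hypothesis \(L\subseteq G\), which comes from the embedding assumption (Assumption~\ref{ass:local-embedding}).
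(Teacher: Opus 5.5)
Your proposal is correct and follows the paper's argument: $\Sigma^{\mathrm{res}}(F,A,P)$ is an obstruction set contained in every obstruction set, hence initial, and any other initial object gives mutual inclusions, so antisymmetry forces equality. Your explicit check that $\Sigma^{\mathrm{res}}(F,A,P) \in \mathsf{Obs}(F,A,P)$ via $L \subseteq G$, and your separate proof of the ``unique minimal obstruction set'' formulation, just spell out steps the paper treats as immediate.
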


\begin{proof}
By Definition~\ref{def:interaction-residue},
\[
\Sigma^{\mathrm{res}}(F,A,P)
=
\operatorname{supp}\bigl(\sigma(F_*(A))\bigr)
\setminus
\bigcup_{S \in \mathrm{Str}(P)}
\operatorname{supp}\bigl(\sigma(F_S(A_S))\bigr).
\]
Hence it is uniquely determined by the global spectral support and the local
stratum supports.

Moreover, by Theorem~\ref{thm:universality-residue},
\(\Sigma^{\mathrm{res}}(F,A,P)\) is contained in every obstruction set and is
itself an obstruction set. Therefore it is the unique initial object in the
poset category of obstruction sets. If another obstruction set \(O\) has the
same initial/universal property, then
\[
\Sigma^{\mathrm{res}}(F,A,P) \subseteq O
\quad\text{and}\quad
O \subseteq \Sigma^{\mathrm{res}}(F,A,P),
\]
so \(O = \Sigma^{\mathrm{res}}(F,A,P)\).
\end{proof}

\begin{remark}[Minimality versus geometric complexity]
\label{rem:universality-complexity}
Minimality does not imply simplicity. Although
\(\Sigma^{\mathrm{res}}\) is the smallest obstruction set at the level of
spectral support, its internal organization may still be nontrivial. Later
interface-localization results analyze how this support-level residue can
split into interface-supported contributions.

Thus, minimality refers only to informational minimality at the level of
global spectral reconstruction; it does not preclude richer geometric or
categorical structure inside the residue.
\end{remark}

\subsection{Interface Localization Theorem}
\label{subsec:interface-localization}

We now arrive at one of the central structural results of the present
work. The previous sections established that the interaction residue
\[
\Sigma^{\mathrm{res}}
\]
measures the failure of exact local-to-global spectral factorization.
The remaining question is therefore:

\[
\textit{Where is the residual spectral contribution localized?}
\]

The answer provided by the present framework is that interaction
residues localize canonically along admissible interfaces separating
distinct operadic strata. Consequently, the residue is not distributed
uniformly throughout the system, but instead decomposes into localized
interface-supported spectral defects.

This localization principle transforms the interaction residue from a
global correction term into a geometrically localized interaction
object associated with coupling structure.

\begin{theorem}[Interface Localization]
\label{thm:interface-localization}

Let \(P\) be a stratified operad with admissible interface set
\[
\mathcal{I}(P),
\]
let \(A\) be a \(P\)-algebra, and let
\[
F:\mathcal{M}\to\mathcal{N}
\]
be a strong monoidal base-change functor.

For each interface
\[
I\in\mathcal{I}(P),
\]
let
\[
\operatorname{Supp}_I(\tau_I)\subseteq\mathbb{C}
\]
denote the spectral support generated by the coupling tensor
\[
\tau_I.
\]

Assume the residual coverage condition
\[
\Sigma^{\mathrm{res}}(F,A,P)
\subseteq
\bigcup_{I\in\mathcal{I}(P)}
\operatorname{Supp}_I(\tau_I).
\]

For each interface \(I\), define the localized interface defect
\[
\mathcal{L}_I(F,A,P)
:=
\Sigma^{\mathrm{res}}(F,A,P)
\cap
\operatorname{Supp}_I(\tau_I).
\]

Then the interaction residue decomposes as
\[
\boxed{
\Sigma^{\mathrm{res}}(F,A,P)
=
\bigcup_{I\in\mathcal{I}(P)}
\mathcal{L}_I(F,A,P).
}
\]

\end{theorem}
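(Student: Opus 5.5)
The plan is to prove the boxed identity as a double inclusion of subsets of \(\mathbb{C}\), using only the definition of \(\mathcal{L}_I(F,A,P)\), the residual coverage hypothesis, and distributivity of intersection over union. Write \(\Sigma := \Sigma^{\mathrm{res}}(F,A,P)\) and \(U := \bigcup_{I\in\mathcal{I}(P)}\operatorname{Supp}_I(\tau_I)\). The key observation is that the right-hand side equals \(\Sigma \cap U\):
\[
\bigcup_{I\in\mathcal{I}(P)} \mathcal{L}_I(F,A,P) \;=\; \bigcup_{I\in\mathcal{I}(P)} \bigl(\Sigma\cap \operatorname{Supp}_I(\tau_I)\bigr) \;=\; \Sigma \cap U .
\]
This step is valid for an arbitrary (possibly countable) index set \(\mathcal{I}(P)\).

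For the inclusion ``\(\supseteq\)'', each \(\mathcal{L}_I(F,A,P)\) is by definition an intersection with \(\Sigma\), so it is a subset of \(\Sigma\). Hence the union over \(I\) is contained in \(\Sigma\). This direction needs no hypothesis.

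For the inclusion ``\(\subseteq\)'', take \(\lambda\in\Sigma\). By the residual coverage condition \(\Sigma\subseteq U\), there is some interface \(I_0\in\mathcal{I}(P)\) with \(\lambda\in\operatorname{Supp}_{I_0}(\tau_{I_0})\). Then \(\lambda\in\Sigma\cap\operatorname{Supp}_{I_0}(\tau_{I_0})=\mathcal{L}_{I_0}(F,A,P)\), so \(\lambda\) lies in the union. Equivalently, \(\Sigma\cap U=\Sigma\) because \(\Sigma\subseteq U\).

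There is no genuine obstacle; the only step carrying content is the use of the coverage hypothesis in the second inclusion, and that is where the proof would fail without it. I would close with a remark on what the proof does \emph{not} establish. The decomposition is a union, not a disjoint union: the sets \(\mathcal{L}_I\) may overlap when interface supports \(\operatorname{Supp}_I(\tau_I)\) intersect. The labelled disjoint-union form \(\bigsqcup_I \mathcal{L}_I\) used heuristically earlier (e.g.\ in Corollary~\ref{cor:reconstruction-procedure}) requires either pairwise disjointness of the interface supports on \(\Sigma\), or passing to the coproduct of labelled points \((I,\lambda)\). In the latter case the natural map \(\bigsqcup_I\mathcal{L}_I\to\Sigma\) is surjective by the argument above, and injective exactly under disjointness.
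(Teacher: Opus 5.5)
Your proof is correct and matches the paper's argument. The paper also proves the identity by double inclusion: $\supseteq$ holds because $\mathcal{L}_I(F,A,P)\subseteq\Sigma^{\mathrm{res}}(F,A,P)$ by definition, and $\subseteq$ uses the residual coverage condition to find an interface $I$ with $\lambda\in\operatorname{Supp}_I(\tau_I)$. Your closing remark on plain union versus labelled disjoint union corresponds to the paper's separate Corollary~\ref{cor:disjoint-interface-localization}, which requires pairwise disjoint interface supports.
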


\begin{proof}

We prove the equality by mutual inclusion.

\medskip

\noindent
(\(\subseteq\))
Let
\[
\lambda\in
\Sigma^{\mathrm{res}}(F,A,P).
\]

By the residual coverage condition, there exists
\[
I\in\mathcal{I}(P)
\]
such that
\[
\lambda\in\operatorname{Supp}_I(\tau_I).
\]

Hence
\[
\lambda
\in
\Sigma^{\mathrm{res}}(F,A,P)
\cap
\operatorname{Supp}_I(\tau_I)
=
\mathcal{L}_I(F,A,P),
\]
and therefore
\[
\lambda
\in
\bigcup_{J\in\mathcal{I}(P)}
\mathcal{L}_J(F,A,P).
\]

\medskip

\noindent
(\(\supseteq\))
Let
\[
\lambda
\in
\bigcup_{I\in\mathcal{I}(P)}
\mathcal{L}_I(F,A,P).
\]

Then for some
\[
I\in\mathcal{I}(P),
\]
we have
\[
\lambda
\in
\mathcal{L}_I(F,A,P).
\]

By definition,
\[
\mathcal{L}_I(F,A,P)
\subseteq
\Sigma^{\mathrm{res}}(F,A,P),
\]
hence
\[
\lambda
\in
\Sigma^{\mathrm{res}}(F,A,P).
\]

Therefore,
\[
\Sigma^{\mathrm{res}}(F,A,P)
=
\bigcup_{I\in\mathcal{I}(P)}
\mathcal{L}_I(F,A,P).
\]

\end{proof}

The theorem establishes the central localization principle of the
paper:

\[
\boxed{
\text{Spectral interaction defects localize along operadic interfaces.}
}
\]

In particular:
\begin{itemize}
    \item the interaction residue possesses intrinsic geometric support,
    \item interfaces act as carriers of nonlocal spectral structure,
    \item global spectral behavior is governed by localized coupling
    geometry.
\end{itemize}

Thus interface geometry and spectral defect structure become directly
linked through the interaction residue formalism.

\begin{corollary}[Disjoint interface localization]
\label{cor:disjoint-interface-localization}

Assume the hypotheses of
Theorem~\ref{thm:interface-localization}.
If the supports
\[
\operatorname{Supp}_I(\tau_I)
\]
are pairwise disjoint, then the localization decomposition is disjoint:
\[
\Sigma^{\mathrm{res}}(F,A,P)
=
\bigsqcup_{I\in\mathcal{I}(P)}
\mathcal{L}_I(F,A,P).
\]

\end{corollary}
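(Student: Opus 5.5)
The plan is to reduce the corollary to Theorem~\ref{thm:interface-localization} and then add one elementary observation about pairwise disjointness. The equality of sets
\[
\Sigma^{\mathrm{res}}(F,A,P)=\bigcup_{I\in\mathcal{I}(P)}\mathcal{L}_I(F,A,P)
\]
is already available under the residual coverage hypothesis. So the only new content is that this union is a disjoint union. In other words, each point of the residue lies in exactly one localized defect, and the union is canonically identified with the coproduct of the $\mathcal{L}_I$ (each point labeled by its originating interface, as in the convention fixed after Definition~\ref{def:interface-operators}).

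\textbf{Step 1: disjointness of the pieces.} I would first show that the localized defects inherit disjointness from the supports. For $I\neq J$ in $\mathcal{I}(P)$, the definition $\mathcal{L}_I=\Sigma^{\mathrm{res}}\cap\operatorname{Supp}_I(\tau_I)$ gives
\[
\mathcal{L}_I(F,A,P)\cap\mathcal{L}_J(F,A,P)\subseteq \operatorname{Supp}_I(\tau_I)\cap\operatorname{Supp}_J(\tau_J)=\emptyset .
\]

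\textbf{Step 2: union equals coproduct.} Next I would show that the canonical map from the coproduct $\bigsqcup_I\mathcal{L}_I$ (pairs $(I,\lambda)$ with $\lambda\in\mathcal{L}_I$) to $\Sigma^{\mathrm{res}}$, given by forgetting the label, is a bijection.
\begin{itemize}
\item \emph{Surjectivity} is exactly the ($\subseteq$) inclusion in the proof of Theorem~\ref{thm:interface-localization}. It uses the coverage condition to produce some $I$ with $\lambda\in\operatorname{Supp}_I(\tau_I)$.
\item \emph{Injectivity} follows from Step 1. If $(I,\lambda)$ and $(J,\lambda)$ both map to $\lambda$, then $\lambda\in\mathcal{L}_I\cap\mathcal{L}_J$, which forces $I=J$.
\end{itemize}
Hence the labeling of residue points by interfaces is well defined and unique, and the decomposition is disjoint.

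\textbf{Main obstacle.} The only point needing care is interpretive rather than technical. One must check that ``pairwise disjoint'' is meant for distinct interfaces $I\neq J$, and that $\bigsqcup$ is the labeled disjoint union fixed earlier. Given this, there is no real difficulty, and the proof is a few lines of set theory on top of Theorem~\ref{thm:interface-localization}.
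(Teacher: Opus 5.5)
Your proposal is correct and follows the same route as the paper: the paper also observes that $\mathcal{L}_I(F,A,P)=\Sigma^{\mathrm{res}}(F,A,P)\cap\operatorname{Supp}_I(\tau_I)$ inherits pairwise disjointness from the supports, and then invokes Theorem~\ref{thm:interface-localization} for the union equality. Your Step~2 adds one detail the paper leaves implicit: it identifies the internal union with the labeled coproduct via the label-forgetting bijection, which is harmless.
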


\begin{proof}

If the supports
\[
\operatorname{Supp}_I(\tau_I)
\]
are pairwise disjoint, then by definition the corresponding localized
defects
\[
\mathcal{L}_I(F,A,P)
=
\Sigma^{\mathrm{res}}(F,A,P)
\cap
\operatorname{Supp}_I(\tau_I)
\]
are pairwise disjoint as subsets of
\[
\mathbb{C}.
\]

The result therefore follows directly from
Theorem~\ref{thm:interface-localization}.

\end{proof}

The localization principle also explains why local spectral sectors
alone are insufficient to reconstruct the global spectral geometry:
the missing spectral contribution is concentrated precisely at the
interfaces where operadic interaction occurs.

\begin{corollary}[Vanishing criterion for operadically decoupled systems]
\label{cor:vanishing-residue-interface}

Suppose all inter-strata coupling tensors vanish:
\[
\tau_I=0
\qquad
\text{for every }
I\in\mathcal{I}(P).
\]

Then
\[
\Sigma^{\mathrm{res}}(F,A,P)
=
\emptyset,
\]
and
\[
\operatorname{supp}
\bigl(
\sigma(F_*(A))
\bigr)
=
\bigcup_{S\in\mathrm{Str}(P)}
\operatorname{supp}
\bigl(
\sigma(F_S(A_S))
\bigr).
\]

\end{corollary}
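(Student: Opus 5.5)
The plan is to derive the vanishing from Theorem~\ref{thm:interface-localization}. The coverage hypothesis of that theorem is doing the real work here, so the key claim to establish is the following. If every coupling tensor vanishes, then each interface support $\operatorname{Supp}_I(\tau_I)$ carries no residual spectral value. The first step is to compute these supports for $\tau_I = 0$. By Definition~\ref{def:interface-operators}, the interface operator is $T_I = \tau_I^\dagger \tau_I = 0$. So the spectral support generated by the coupling reduces to the trivial contribution, namely at most the point $\{0\}$, which records no genuine coupling. I will adopt the convention, implicit in the phrase ``spectral support generated by the coupling tensor'', that a zero coupling generates empty support: $\operatorname{Supp}_I(0) = \emptyset$. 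This is the reading under which the corollary is meaningful.

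With this in hand, the residual coverage condition gives
\[
\Sigma^{\mathrm{res}}(F,A,P) \subseteq \bigcup_{I\in\mathcal{I}(P)} \operatorname{Supp}_I(\tau_I) = \emptyset .
\]
Equivalently, each $\mathcal{L}_I(F,A,P) = \Sigma^{\mathrm{res}}(F,A,P) \cap \emptyset = \emptyset$. The localization identity of Theorem~\ref{thm:interface-localization} then yields $\Sigma^{\mathrm{res}}(F,A,P) = \bigcup_I \emptyset = \emptyset$. The second displayed equality follows from Theorem~\ref{thm:local-to-global-reconstruction}: substituting $\Sigma^{\mathrm{res}} = \emptyset$ into the reconstruction formula leaves exactly the union of the local supports. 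This is the same observation as in Remark~\ref{rem:reconstruction-consequences}.

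The main obstacle is justifying that the coverage hypothesis, and hence the theorem, applies when the couplings are zero. It is not automatic that a decoupled system satisfies it, since $\Sigma^{\mathrm{res}}$ is defined intrinsically and not through $\tau_I$. I would therefore give an independent argument as backup. If all interface structure maps are zero, then the $F_*P$-algebra $F_*A$ is the direct-sum (product) algebra of the stratum algebras $F_*A_S$, because the only nonzero operations are internal to strata. The global spectrum of a block-diagonal system is the union of the block spectra, as in Example~\ref{ex:simple-failure} at $\varepsilon = 0$. Hence $G \subseteq L$, and so $G \setminus L = \emptyset$ directly.

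Making this direct-sum identification rigorous for the operadic spectrum $\sigma_P(A)$ of Definition~\ref{def:global-operadic-spectrum} is the hard step. I would first try to argue that the relative tensor product $\mathrm{Hoch}_{\mathcal{M}}(A)\otimes_P \mathcal{O}_P^{\mathrm{res}}$ splits over strata when the interface actions vanish. I would then use that $R$ sends finite coproducts to unions of supports. If that splitting cannot be verified in general, I would state the corollary under the coverage hypothesis together with the convention $\operatorname{Supp}_I(0) = \emptyset$, which is exactly what the first two paragraphs use.
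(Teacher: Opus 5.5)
Your first route does not prove the corollary as stated. Once you impose $\operatorname{Supp}_I(0)=\emptyset$, the residual coverage condition of Theorem~\ref{thm:interface-localization} reads $\Sigma^{\mathrm{res}}(F,A,P)\subseteq\emptyset$. That is exactly the conclusion, so the later appeal to the localization identity adds nothing. Without the convention, coverage only gives $\Sigma^{\mathrm{res}}\subseteq\{0\}$, which is not enough. Unlike Corollary~\ref{cor:disjoint-interface-localization}, this corollary does not assume the hypotheses of the localization theorem. Your fallback of restating it ``under the coverage hypothesis together with the convention'' therefore turns it into a tautology rather than proving it. You noticed that coverage is not automatic for a decoupled system, but the problem is sharper: with zero couplings, coverage is equivalent to the claim itself.

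The paper's proof is essentially your backup argument. Vanishing $\tau_I$ means no inter-strata interaction, so every point of $G=\operatorname{supp}(\sigma(F_*(A)))$ already comes from some isolated local sector. This gives $G=L$, and then $\Sigma^{\mathrm{res}}=G\setminus L=\emptyset$ by Definition~\ref{def:interaction-residue}. The paper gets $G=L$ first and deduces vanishing. Your order, vanishing first and then $G=L$ via Theorem~\ref{thm:local-to-global-reconstruction}, is equivalent given $L\subseteq G$.

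The decoupling step $G\subseteq L$ is asserted in the paper, not derived from $\mathrm{Hoch}_{\mathcal{M}}(A)\otimes_P\mathcal{O}_P^{\mathrm{res}}$. It is in effect taken as the support-level meaning of ``decoupled'', as in the examples where switching off the interface returns the union of the local spectra. So you have correctly located the only substantive step. Your plan (the relative tensor product splits over strata, and $R$ turns coproducts into unions) is what a rigorous version would need. That splitting is not automatic: the coequalizer defining $\otimes_P$ runs over all operations of $P$, including the interface ones. Those may still impose relations even when they act by zero on $A$, so an extra hypothesis on $P$ or on $R$ is likely required.
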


\begin{proof}

If every coupling tensor vanishes, then no interaction between
distinct strata occurs. Consequently, every spectral contribution of
\[
F_*(A)
\]
arises from some isolated local spectral sector.

Hence
\[
\operatorname{supp}
\bigl(
\sigma(F_*(A))
\bigr)
=
\bigcup_{S\in\mathrm{Str}(P)}
\operatorname{supp}
\bigl(
\sigma(F_S(A_S))
\bigr).
\]

By Definition~\ref{def:interaction-residue},
\[
\Sigma^{\mathrm{res}}(F,A,P)
=
\operatorname{supp}
\bigl(
\sigma(F_*(A))
\bigr)
\setminus
\bigcup_S
\operatorname{supp}
\bigl(
\sigma(F_S(A_S))
\bigr),
\]
and therefore
\[
\Sigma^{\mathrm{res}}(F,A,P)
=
\emptyset.
\]

\end{proof}

\begin{remark}[Geometric interpretation]
\label{rem:interface-localization-geometric}

The Interface Localization Theorem shows that the interaction residue
is not attached to any individual stratum. Instead, the residue
decomposes into interface-supported spectral defects:
\[
\Sigma^{\mathrm{res}}(F,A,P)
=
\bigcup_{I\in\mathcal{I}(P)}
\mathcal{L}_I(F,A,P).
\]

Thus the interaction residue acts as a spectral signature of operadic
coupling geometry.

Different interfaces may support qualitatively different spectral
phenomena depending on the realization of the operadic system.
For example:
\begin{itemize}
    \item isolated interfaces may contribute isolated spectral values,
    \item extended interfaces may contribute spectral bands,
    \item singular interfaces may generate concentrated residual
    structures.
\end{itemize}

These interpretations depend on the concrete realization and are not
part of the abstract support-level statement proved above.

\end{remark}

\begin{remark}[Comparison with classical localization phenomena]
\label{rem:interface-localization-analogies}

The localization principle is reminiscent of several classical
phenomena: edge states in topological phases, interface modes for
differential operators with discontinuous coefficients, localized
graph modes near gluing regions, and costalk-type contributions in
sheaf-theoretic stratifications. These analogies are intended as
motivation; the present theorem establishes only the abstract
support-level localization of the interaction residue under the
residual coverage condition.

\end{remark}

\begin{remark}[Forward view]
\label{rem:interface-localization-forward}

The Interface Localization Theorem reduces the global study of
\[
\Sigma^{\mathrm{res}}
\]
to the study of localized interface defects
\[
\mathcal{L}_I(F,A,P).
\]

This motivates the later classification of interface defects and the
analysis of their behavior under deformation, refinement, and
nilpotent perturbation.

\end{remark}

\subsection{Spectral Defect Classification}
\label{subsec:defect-classification}

The Interface Localization Theorem (Theorem~\ref{thm:interface-localization}) shows that the interaction residue $\Sigma^{\mathrm{res}}$ decomposes into localized interface-supported spectral defects. A natural next question is therefore: what kinds of spectral defects can occur? The answer depends on two fundamental geometric features: the geometry of the interaction interface, and the algebraic depth of the associated operadic interaction. In particular, lower-dimensional interfaces tend to generate localized spectral contributions, extended interfaces produce continuous spectral structures, singular interfaces may generate highly irregular spectra, and non-diagonalizable interaction operators create nilpotent defect sectors.

This leads to a canonical classification of residue defects. The following theorem provides a classification of spectral defects according to interface geometry and algebraic coupling structure. This classification is \emph{prototype} in nature — it organizes the principal regimes that arise under natural analytic hypotheses (continuity, compactness, spectral stability) — and is consistent with the examples developed in Section~\ref{sec:examples}.

\begin{theorem}[Spectral Defect Classification]
\label{thm:defect-classification}
Let $\Sigma^{\mathrm{res}}(F,A,P)$ be the interaction residue associated with a stratified operadic system. Assume:

\begin{enumerate}
    \item The Interface Localization Theorem (Theorem~\ref{thm:interface-localization}) holds, so that
    \[
    \Sigma^{\mathrm{res}}(F,A,P)
    =
    \bigsqcup_{I \in \mathcal{I}(P)}
    \mathcal{L}_I(F,A,P),
    \]
    where $\bigsqcup$ denotes a disjoint union in the labeled/interface-indexed sense (i.e., the same spectral value may appear from different interfaces, but is distinguished by its originating interface).
    
    \item A geometric realization is fixed, in which each interface $I$ has a well-defined geometric dimension $\dim(I) \in \mathbb{N} \cup \{\infty\}$ (and, where applicable, a singularity type).
    
    \item The coupling tensor $\tau_I$ associated with interface $I$ is a bounded linear operator on a Banach or Hilbert space, depending continuously on the interface parameters under suitable analytic conditions (e.g., norm continuity and compactness assumptions where needed).
\end{enumerate}

Then, under the above hypotheses, the localized spectral defects $\mathcal{L}_I(F,A,P)$ admit a classification according to the geometry of $I$ and the algebraic structure of $\tau_I$. The principal defect types are summarized in Tables~\ref{tab:geometric-defect-types} and~\ref{tab:algebraic-refinements}.

\begin{table}[htbp]
\centering
\caption{Geometric defect types (prototype)}
\label{tab:geometric-defect-types}
\renewcommand{\arraystretch}{1.2}
\begin{tabular}{|l|l|l|}
\hline
\textbf{Defect type} & \textbf{Interface geometry} & \textbf{Typical spectral signature} \\
\hline
Point defect & $\dim I = 0$ (isolated point) & Isolated eigenvalue(s) \\
\hline
Line defect & $\dim I = 1$ (smooth curve) & May produce continuous spectral bands \\
\hline
Surface defect & $\dim I = 2$ (smooth surface) & May produce two-dimensional spectral regions \\
\hline
Higher-dimensional defect & $\dim I = d \ge 3$ (smooth manifold) & May produce spectral regions in $\mathbb{C}$ with parameter-driven complexity \\
\hline
\end{tabular}
\end{table}

\begin{table}[htbp]
\centering
\caption{Algebraic refinements (beyond support level)}
\label{tab:algebraic-refinements}
\renewcommand{\arraystretch}{1.2}
\begin{tabular}{|l|p{0.65\linewidth}|}
\hline
\textbf{Algebraic type} & \textbf{Spectral/algebraic signature} \\
\hline
Semisimple (diagonalizable) & Standard eigenvalue structure \\
\hline
Nilpotent / Jordan & Nilpotent contributions invisible to spectral support; appear in functional calculus via derivative terms \\
\hline
\end{tabular}
\end{table}
\end{theorem}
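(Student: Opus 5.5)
The proof will proceed interface by interface. By hypothesis (1), the classification of $\Sigma^{\mathrm{res}}(F,A,P)$ reduces to classifying each localized defect $\mathcal{L}_I(F,A,P)=\Sigma^{\mathrm{res}}(F,A,P)\cap\operatorname{Supp}_I(\tau_I)$ separately. Because the union is labeled by interface, no cross-interface interaction needs to be controlled. It therefore suffices to show two things: each $\mathcal{L}_I$ falls into exactly one row of Table~\ref{tab:geometric-defect-types}, determined by $\dim(I)$, and into one row of Table~\ref{tab:algebraic-refinements}, determined by the algebraic type of $\tau_I$. The two labels are independent, and their product gives the two-dimensional taxonomy.

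For the geometric axis, I will treat $\tau_I$ via hypothesis (3) as a family $\theta\mapsto\tau_I(\theta)$ of bounded operators, norm-continuous in the interface parameter $\theta\in I$. I will take $\operatorname{Supp}_I(\tau_I)$ to be the union $\bigcup_{\theta\in I}\sigma(\tau_I(\theta))$.

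\textbf{Case $\dim I=0$.} The parameter set is a point. In the finite-dimensional or compact setting, $\sigma(\tau_I)$ consists of isolated eigenvalues of finite multiplicity, accumulating at most at $0$. Hence $\mathcal{L}_I$ is a set of isolated eigenvalues, which is a point defect.

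\textbf{Case $\dim I=d\ge 1$.} I will use upper semicontinuity of the spectrum and continuity of isolated eigenvalues under norm-continuous perturbation (Kato). Locally, each isolated eigenvalue branch $\lambda(\theta)$ is a continuous map from a $d$-dimensional manifold into $\mathbb{C}$. Its image is therefore a connected set: a curve or band when $d=1$, and a region of real dimension at most $\min(d,2)$ when $d\ge2$. This is exactly the ``may produce'' content of the table.

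For the algebraic axis, I will apply the Jordan (Riesz projector--nilpotent) decomposition from SOC I to $\tau_I$ on the spectral subspace attached to each $\lambda\in\mathcal{L}_I$. This writes the restriction as $\lambda P_\lambda+N_\lambda$ with $N_\lambda^{m}=0$. The defect is semisimple when $m=1$ and nilpotent of depth $m$ when $m\ge2$. The claim that nilpotent parts are ``invisible to spectral support'' follows because $\sigma(\lambda P_\lambda+N_\lambda)=\{\lambda\}$ regardless of $N_\lambda$. The functional calculus formula $f(\tau_I)P_\lambda=\sum_{k<m}f^{(k)}(\lambda)N_\lambda^k/k!$ supplies the derivative terms.

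The main obstacle is the geometric axis for $d\ge1$. Continuity of eigenvalue branches can fail at crossings and exceptional points, where branches are only Puiseux-continuous, and non-isolated spectrum need not move continuously at all. I would first restrict to the compact or isolated-spectrum regime, where Kato's theory gives continuity of the total projection and hence continuous (possibly multivalued) spectral branches. I would then state the band and region conclusions only up to ``may produce'', as the table does, since the theorem is a prototype classification rather than a sharp one.
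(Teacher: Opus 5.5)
Your proposal follows the paper's proof essentially step for step: reduce to individual interfaces via hypothesis (1), use compact-operator spectral theory for point defects, use continuity of isolated eigenvalue branches over the parameter manifold (capped at real dimension $2$ in $\mathbb{C}$) for line, surface and higher-dimensional defects, and use a projector--nilpotent (Jordan) decomposition with the terminating derivative expansion of $f$ for the algebraic refinement. The only real difference is that the paper simply postulates compactness, continuous eigenvalue selection and the local Jordan decomposition as explicit hypotheses (H3)--(H6), whereas you propose to derive them from Kato's perturbation theory; that is why you rightly flag multivalued branches at crossings and exceptional points.
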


\begin{proof}
We prove the theorem under the following explicit analytic and geometric hypotheses, which are assumed to hold throughout:

\begin{enumerate}[label=(H\arabic*)]
    \item \textbf{(Interface Localization)} Theorem~\ref{thm:interface-localization} holds, so that
    \[
    \Sigma^{\mathrm{res}}(F,A,P) = \bigsqcup_{I \in \mathcal{I}(P)} \mathcal{L}_I(F,A,P).
    \]
    
    \item \textbf{(Geometric Realization)} A geometric realization functor is fixed, associating to each interface $I$ a compact subset of a metric space with well-defined Hausdorff dimension $\dim(I) \in \mathbb{N} \cup \{\infty\}$.
    
    \item \textbf{(Parameter Continuity)} For each interface $I$ homeomorphic to a compact smooth manifold of dimension $d \ge 1$, the coupling tensors form a family $\{\tau_p\}_{p \in I}$ such that the map $p \mapsto \tau_p$ is norm-continuous.
    
    \item \textbf{(Compactness)} For each interface $I$ of dimension $d \ge 1$, each $\tau_p$ is a compact operator on a Banach space (or, in the finite-dimensional case, a matrix).
    
    \item \textbf{(Spectral Stability)} For each $\tau_{p_0}$ and each isolated eigenvalue $\lambda_0$ of finite multiplicity, there exists a neighborhood $U \subseteq I$ of $p_0$ and a continuous function $\lambda: U \to \mathbb{C}$ such that $\lambda(p)$ is an eigenvalue of $\tau_p$ for all $p \in U$, and $\lambda(p_0) = \lambda_0$.
    
    \item \textbf{(Local Jordan Decomposition)} For each $\tau_p$ and each isolated eigenvalue of finite multiplicity, the restriction of $\tau_p$ to the corresponding finite-dimensional generalized eigenspace admits a Jordan decomposition $\tau_p = D_p + N_p$, with $D_p$ diagonalizable, $N_p$ nilpotent, and $[D_p, N_p] = 0$.
\end{enumerate}

\medskip
\noindent
\textbf{Step 1: Reduction to individual interfaces.}
By hypothesis (H1), the total residue decomposes as a disjoint union (in the labeled/interface-indexed sense)
\[
\Sigma^{\mathrm{res}}(F,A,P) = \bigsqcup_{I \in \mathcal{I}(P)} \mathcal{L}_I(F,A,P).
\]
Thus it suffices to characterize each $\mathcal{L}_I(F,A,P)$ individually. The full residue is then the disjoint union of these characterizations.

\medskip
\noindent
\textbf{Step 2: Point defects ($\dim I = 0$).}
Assume $I$ is a single point (zero-dimensional interface). Then the coupling tensor $\tau_I$ is a single bounded linear operator. By hypothesis (H4), $\tau_I$ is compact. The spectrum of a compact operator on a Banach space satisfies:
\begin{itemize}
    \item $\sigma(\tau_I)$ is a countable set with $0$ as the only possible accumulation point.
    \item Every nonzero $\lambda \in \sigma(\tau_I)$ is an isolated eigenvalue of finite multiplicity.
\end{itemize}
If the associated $P$-algebra assigns finite-dimensional vector spaces to the colors involved (a special case of compactness), $\tau_I$ is a matrix, and its spectrum consists of finitely many isolated eigenvalues.

Consequently, the localized defect $\mathcal{L}_I(F,A,P)$ (defined as the contribution to $\Sigma^{\mathrm{res}}$ coming from the spectrum of the interface operator derived from $\tau_I$) consists of isolated spectral points (eigenvalues). This establishes the "point defect" entry in Table~\ref{tab:geometric-defect-types}.

\medskip
\noindent
\textbf{Step 3: Line defects ($\dim I = 1$).}
Assume $I$ is a one-dimensional smooth curve, homeomorphic to $[0,1]$ (or $S^1$). By hypothesis (H3), the family $\{\tau_s\}_{s \in I}$ is norm-continuous. By (H4), each $\tau_s$ is compact. For compact operators, the non-zero spectrum consists of isolated eigenvalues of finite multiplicity.

Fix $s_0 \in I$ and let $\lambda_0 \neq 0$ be an eigenvalue of $\tau_{s_0}$ with multiplicity $m$. By hypothesis (H5), there exists a neighborhood $U \subseteq I$ of $s_0$ and continuous functions $\lambda_1(s), \dots, \lambda_m(s)$ (counting multiplicities) such that each $\lambda_j(s)$ is an eigenvalue of $\tau_s$ and $\lambda_j(s_0) = \lambda_0$. Define the \emph{spectral band} associated to $\lambda_0$ as
\[
B_{\lambda_0} := \bigcup_{j=1}^m \{ \lambda_j(s) : s \in U \} \subseteq \mathbb{C}.
\]
Since $I$ is compact, we can cover $I$ by finitely many such neighborhoods and take the union over all eigenvalues. The resulting spectral set
\[
\mathcal{L}_I(F,A,P) = \bigcup_{s \in I} \sigma(\tau_s)
\]
is a union of continuous curves (bands) in $\mathbb{C}$. Each band is a compact connected subset of $\mathbb{C}$. If the eigenvalue branches are Lipschitz or piecewise $C^1$, then the band has Hausdorff dimension at most $1$. In the self-adjoint case, the bands lie on the real line and may exhibit gaps.

This establishes the "line defect" entry: the spectral signature may produce continuous spectral bands. (The "may" accounts for possible degeneracies, overlaps, or the presence of only discrete eigenvalues if the family is constant.)

\medskip
\noindent
\textbf{Step 4: Surface defects ($\dim I = 2$).}
Assume $I$ is a two-dimensional smooth surface, homeomorphic to $[0,1]^2$ (or $S^2$, $T^2$). By (H3), the family $\{\tau_{(x,y)}\}_{(x,y) \in I}$ is norm-continuous. By (H4), each $\tau_{(x,y)}$ is compact. For each fixed eigenvalue $\lambda_0$ of $\tau_{(x_0,y_0)}$ with finite multiplicity $m$, (H5) provides continuous functions $\lambda_j(x,y)$ defined on a neighborhood $U \subseteq I$ of $(x_0,y_0)$ such that $\lambda_j(x,y)$ are eigenvalues of $\tau_{(x,y)}$.

Define the spectral band as
\[
B_{\lambda_0} := \bigcup_{j=1}^m \{ \lambda_j(x,y) : (x,y) \in U \} \subseteq \mathbb{C}.
\]
Since $I$ is compact, we cover it by finitely many such neighborhoods. The union
\[
\mathcal{L}_I(F,A,P) = \bigcup_{(x,y) \in I} \sigma(\tau_{(x,y)})
\]
is therefore a union of continuous images of $U \subseteq \mathbb{R}^2$ into $\mathbb{C}$. Each such image is a compact set that may have topological dimension $2$ (if the eigenvalue map is a smooth embedding) or lower dimension if the map is degenerate.

In typical cases (e.g., $\tau_{(x,y)} = \lambda(x,y) \cdot \mathrm{Id}$ with $\lambda: I \to \mathbb{C}$ a smooth embedding), $\mathcal{L}_I$ is a two-dimensional region in $\mathbb{C}$. Critical spectral concentration (van Hove-like singularities) occurs at parameter values where $\nabla \lambda = 0$, leading to local enhancements of spectral density.

This establishes the "surface defect" entry: the spectral signature may produce two-dimensional spectral regions.

\medskip
\noindent
\textbf{Step 5: Higher-dimensional smooth manifolds ($\dim I = d \ge 3$).}
For $I$ a compact $d$-dimensional smooth manifold, (H3)–(H5) provide a norm-continuous $d$-parameter family $\{\tau_{\mathbf{t}}\}_{\mathbf{t} \in I}$ of compact operators. For each eigenvalue $\lambda_0$ of $\tau_{\mathbf{t}_0}$ of finite multiplicity $m$, (H5) yields continuous eigenvalue functions $\lambda_j(\mathbf{t})$ on a neighborhood $U \subseteq \mathbb{R}^d$. The spectral set
\[
\mathcal{L}_I(F,A,P) = \bigcup_{\mathbf{t} \in I} \sigma(\tau_{\mathbf{t}})
\]
is a union of continuous images of $U$ into $\mathbb{C}$. Since the spectral set lies in $\mathbb{C} \cong \mathbb{R}^2$, its topological dimension is at most $2$. The parameter dimension $d \ge 3$ influences the complexity of the spectral structure (e.g., the structure of critical points and level sets) but does not increase the topological dimension of the spectral support beyond $2$. This is reflected in Table~\ref{tab:geometric-defect-types} under "Higher-dimensional defect."

\medskip
\noindent
\textbf{Step 6: Algebraic refinements (nilpotent/Jordan structure).}
By hypothesis (H6), for each isolated finite-multiplicity spectral sector, the restriction of $\tau_p$ to the corresponding finite-dimensional generalized eigenspace admits a Jordan decomposition $\tau_p = D_p + N_p$ with $D_p$ diagonalizable, $N_p$ nilpotent, and $[D_p, N_p] = 0$. The ordinary spectrum satisfies $\sigma(\tau_p) = \sigma(D_p)$ as sets, because the nilpotent contribution $N_p$ does not introduce new eigenvalues. Hence $\mathcal{L}_I(F,A,P)$ as a spectral support is completely determined by the diagonalizable parts $D_p$.

However, the nilpotent structure $N_p$ encodes additional algebraic data:
\begin{itemize}
    \item \textbf{Jordan block sizes:} The sizes of the Jordan blocks determine the order of nilpotency.
    \item \textbf{Functional calculus refinement:} For any holomorphic function $f$,
    \[
    f(\tau_p) = f(D_p + N_p) = f(D_p) + f'(D_p)N_p + \frac{f''(D_p)}{2!}N_p^2 + \cdots,
    \]
    where the sum terminates because $N_p$ is nilpotent. The derivative terms $f^{(q)}(D_p)N_p^q$ are invisible to the spectrum as a set but are essential for algebraic descriptions (e.g., computing $f(\tau_p)$ explicitly).
    \item \textbf{Resolvent refinement:} The resolvent $(z - \tau_p)^{-1}$ has higher-order poles at eigenvalues when $N_p \neq 0$, whereas it has simple poles in the diagonalizable case.
\end{itemize}

Therefore, nilpotent/Jordan refinements do not affect the geometric classification of spectral supports but provide a finer algebraic classification, as shown in Table~\ref{tab:algebraic-refinements}.

\medskip
\noindent
\textbf{Step 7: Coverage of the principal prototype regimes.}
The geometric possibilities for an interface $I$ under the realization functor (H2) include:
\begin{itemize}
    \item Zero-dimensional (isolated points),
    \item One-dimensional smooth curves,
    \item Two-dimensional smooth surfaces,
    \item $d$-dimensional smooth manifolds ($d \ge 3$),
    \item Fractal or singular sets (non-integer Hausdorff dimension).
\end{itemize}
The algebraic possibilities for $\tau_p$ under (H6) include:
\begin{itemize}
    \item Semisimple (diagonalizable, $N_p = 0$),
    \item Non-diagonalizable ($N_p \neq 0$).
\end{itemize}
Tables~\ref{tab:geometric-defect-types} and~\ref{tab:algebraic-refinements} capture these principal prototype regimes.

Thus, under the stated hypotheses (H1)–(H6), the residue $\Sigma^{\mathrm{res}}(F,A,P)$ decomposes into a disjoint union (in the labeled/interface-indexed sense) of localized defects, each of which falls into one of the prototype geometric types (point, line, surface, higher-dimensional, or singular/fractal), with possible algebraic refinements (semisimple or nilpotent/Jordan). This completes the proof.
\end{proof}

\begin{remark}[Scope of the classification]
\label{rem:defect-classification-scope}
The classification presented in Theorem~\ref{thm:defect-classification} is a \emph{prototype} classification rather than a universal classification theorem. The statements about continuous bands, higher-dimensional spectral regions, and fractal behavior depend on additional analytic, geometric, or self-similarity hypotheses. Thus, the entries in Tables~\ref{tab:geometric-defect-types} and~\ref{tab:algebraic-refinements} should be interpreted as typical spectral signatures under favorable conditions, not as universal consequences for all stratified operadic systems. A complete classification in full generality is left for future work.
\end{remark}

\begin{remark}[Geometric meaning of the classification]
The classification framework provides a geometric taxonomy for localized operadic spectral defects. Its purpose is not to exhaust all possible spectral behavior, but to organize the main prototype regimes according to interface geometry and algebraic coupling structure:
\[
\text{interface geometry and coupling structure}
\quad\Longrightarrow\quad
\text{possible spectral signatures}.
\]

Heuristically, the correspondence is analogous to familiar geometric phenomena:
\[
\begin{array}{c|c}
\text{Classical geometry} & \text{Current framework} \\
\hline
\text{point singularity} & \text{point defect} \\
\text{edge singularity} & \text{line defect} \\
\text{boundary layer} & \text{surface defect} \\
\text{fractal singularity} & \text{singular defect} \\
\text{non-semisimple algebraic singularity} & \text{nilpotent defect}
\end{array}
\]

A key feature is the inclusion of nilpotent defects: these are not merely support-level spectral phenomena, but algebraic refinements arising from non-diagonalizable coupling tensors and Jordan-type contributions. In this sense, the residue geometry records both interface localization and algebraic interaction depth.
\end{remark}

\begin{remark}[Algebraic refinement of the support-level classification]
\label{rem:defect-nilpotent-algebraic}
The support-level localization theorem (Theorem~\ref{thm:interface-localization}) classifies only the geometric support of defects. Nilpotent and Jordan data provide an additional algebraic refinement that is \emph{not visible at the level of spectral support alone}.

For each admissible interface $I$, consider the induced interface interaction operator
\[
T_I := \tau_I^\dagger \tau_I \;:\; \bigotimes_{i=1}^k A_{c_i} \longrightarrow \bigotimes_{i=1}^k A_{c_i},
\]
defined whenever the ambient category admits a dagger structure (e.g., in a Hilbert space realization). Assume that $T_I$ admits a Jordan--Chevalley decomposition
\[
T_I = S_I + N_I,
\]
where $S_I$ is semisimple (diagonalizable over $\mathbb{C}$) and $N_I$ is nilpotent (when such a decomposition exists; in infinite-dimensional settings this may require additional conditions). The \emph{nilpotent depth} is the smallest integer $r$ such that $N_I^r = 0$.

The corresponding defect contribution records:
\begin{itemize}
    \item the spectral values (eigenvalues) of $S_I$ (which may be isolated or embedded in a band),
    \item the higher-order generalized-eigenvector structure determined by $N_I$, which manifests in the functional calculus via derivative terms $f^{(q)}(\lambda) N_I^q$.
\end{itemize}

Thus, the residue geometry simultaneously captures support-level localization (geometric defects) and algebraic refinement (nilpotent/Jordan depth). In non-Hermitian physical settings, nilpotent defects correspond to exceptional points where eigenmodes coalesce.
\end{remark}

\begin{remark}[Possible refinements under additional structure]
\label{rem:defect-classification-invariants}
Under suitable analytic, geometric, or topological hypotheses, the classification may be refined by additional invariants:
\begin{itemize}
    \item \textbf{Multiplicity structure}: point defects may have multiplicity $m$, corresponding to $m$-fold degenerate eigenvalues.
    \item \textbf{Band geometry}: line defects may produce bands with nontrivial topological invariants (e.g., Chern or winding numbers) when additional topological structure is present.
    \item \textbf{Density singularities}: surface defects may exhibit critical spectral concentration phenomena (analogous to van Hove singularities) depending on the dispersion relation.
    \item \textbf{Fractal dimensions}: singular defects (e.g., fractal interfaces or chaotic invariant sets) may be characterized by Hausdorff or box-counting dimensions.
\end{itemize}
These refinements require additional structure beyond the basic framework and are not proved in full generality here.
\end{remark}

\begin{remark}[Heuristic physical interpretation]
\label{rem:defect-classification-physical}
The classification theorem suggests heuristic physical interpretations:
\[
\begin{array}{c|c}
\text{Defect type} & \text{Possible physical interpretation} \\
\hline
\text{Point defect} & \text{Impurity atom (localized state)} \\
\text{Line defect} & \text{Dislocation, domain wall (edge state)} \\
\text{Surface defect} & \text{Grain boundary (surface state)} \\
\text{Singular defect} & \text{Fractal electrode (anomalous diffusion)} \\
\text{Nilpotent defect} & \text{Non-Hermitian coupling (exceptional points)}
\end{array}
\]
These analogies are intended as motivation; a rigorous derivation from the operadic framework is deferred to future work.
\end{remark}

\begin{remark}[Typical spectral signatures of defect classes]
\label{rem:classification-spectral-type}
Under additional analytic assumptions on the induced interaction operators, different interface defects may exhibit characteristic spectral behavior:
\begin{itemize}
    \item Point-like localized interfaces with diagonalizable finite-dimensional interaction operators may produce discrete localized spectral values.
    
    \item Smooth one-dimensional interfaces with sufficiently regular self-adjoint or normal interaction structure may produce band-type spectral sets.
    
    \item Self-similar or fractal interfaces may produce singular or fractal-type spectral supports when the interaction operators respect the underlying scaling structure.
    
    \item Non-diagonalizable interaction operators with Jordan blocks of size $n \geq 2$ contribute generalized eigenvector data and higher-order nilpotent terms. These are algebraic refinements and are \emph{not visible at the level of spectral support alone} (see Remark~\ref{rem:defect-nilpotent-algebraic}).
\end{itemize}

Conversely, observed spectral signatures may provide partial evidence for the underlying interface geometry or coupling structure, but they do not determine it uniquely without additional assumptions. A rigorous inverse theory is beyond the scope of this paper.
\end{remark}

\subsection{Refinement Functoriality}
\label{subsec:refinement-functoriality}

An important requirement for any geometric defect framework is stability
under refinement of the underlying stratification. In the present setting,
this means that the interaction residue should not depend arbitrarily on a
particular choice of operadic decomposition.

Indeed, a given operadic system may admit many compatible stratifications,
ranging from coarse decompositions to fine multiscale refinements,
hierarchical interface structures, or singular partitions. A meaningful
residue theory should therefore behave coherently under such refinements.
Otherwise, the interaction residue would primarily reflect a modeling choice
rather than geometric information intrinsic to the operadic interaction
structure itself.

This motivates the refinement functoriality results developed below, where
the behavior of localized interface defects under stratification refinement
is analyzed systematically. 

The following theorem shows that the residue geometry is preserved canonically under refinement of stratification.

\begin{theorem}[Functoriality Under Interaction-Preserving Refinement]
\label{thm:refinement-functoriality}
Let \(P\) be a stratified operad with stratum set \(\mathrm{Str}(P)\), let
\(A\) be a stratified \(P\)-algebra, and let
\(F:\mathcal{M} \to \mathcal{N}\) be a strong monoidal base-change functor.
Let
\[
\rho: \widetilde{P} \to P
\]
be an interaction-preserving refinement, with induced refined algebra
\(\widetilde{A}\).

Assume that \(\rho\) induces a canonical transport map on realized spectral
supports, denoted \(\rho^*\), which is injective (or more generally a set-theoretic embedding onto its transported image), so that it preserves relative set differences:
\[
\rho^*(X \setminus Y) = \rho^*(X) \setminus \rho^*(Y)
\]
for all relevant spectral supports \(Y \subseteq X\). Moreover, assume the following compatibility conditions hold:
\[
\operatorname{supp}\bigl(\sigma(F_*(\widetilde{A}))\bigr)
=
\rho^*\operatorname{supp}\bigl(\sigma(F_*(A))\bigr),
\]
and
\[
\bigcup_{\widetilde{S} \in \mathrm{Str}(\widetilde{P})}
\operatorname{supp}\bigl(\sigma(F_{\widetilde{S}}(\widetilde{A}_{\widetilde{S}}))\bigr)
=
\rho^*
\left(
\bigcup_{S \in \mathrm{Str}(P)}
\operatorname{supp}\bigl(\sigma(F_S(A_S))\bigr)
\right).
\]

Then, after identifying spectral supports via \(\rho^*\),
\[
\boxed{
\Sigma^{\mathrm{res}}(F, \widetilde{A}, \widetilde{P})
=
\rho^*\bigl(\Sigma^{\mathrm{res}}(F, A, P)\bigr).
}
\]
\end{theorem}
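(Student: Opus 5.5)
The plan is to reduce the statement to a purely set-theoretic computation, using Definition~\ref{def:interaction-residue} for both the coarse and the refined systems together with the hypotheses on \(\rho^*\). Introduce the abbreviations
\[
G := \operatorname{supp}\bigl(\sigma(F_*(A))\bigr),\qquad
L := \bigcup_{S \in \mathrm{Str}(P)} \operatorname{supp}\bigl(\sigma(F_S(A_S))\bigr),
\]
and similarly \(\widetilde{G}\) and \(\widetilde{L}\) for the refined system \((\widetilde{P},\widetilde{A})\). By the definition of the residue (as used in Theorem~\ref{thm:stratified-base-change}), \(\Sigma^{\mathrm{res}}(F,A,P) = G \setminus L\) and \(\Sigma^{\mathrm{res}}(F,\widetilde{A},\widetilde{P}) = \widetilde{G} \setminus \widetilde{L}\).

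First, I will check that the difference-preservation hypothesis actually applies, i.e.\ that \(L \subseteq G\). This is exactly the embedding convention behind Assumption~\ref{ass:local-embedding}, and it was used in the proof of Theorem~\ref{thm:stratified-base-change} (equivalently, it is the standing assumption in Theorem~\ref{thm:universality-residue}). Second, I will substitute the two compatibility conditions \(\widetilde{G} = \rho^*(G)\) and \(\widetilde{L} = \rho^*(L)\). This gives
\[
\Sigma^{\mathrm{res}}(F,\widetilde{A},\widetilde{P}) = \rho^*(G) \setminus \rho^*(L).
\]
Third, I will apply the assumed identity \(\rho^*(X\setminus Y) = \rho^*(X)\setminus\rho^*(Y)\) with \(X = G\) and \(Y = L\). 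This yields \(\rho^*(G \setminus L) = \rho^*\bigl(\Sigma^{\mathrm{res}}(F,A,P)\bigr)\), which is the claimed identity.

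For robustness, I would also note why injectivity alone suffices. The inclusion \(\rho^*(X\setminus Y) \subseteq \rho^*(X)\) is immediate. If some \(\rho^*(x) = \rho^*(y)\) with \(x \in X \setminus Y\) and \(y \in Y\), injectivity forces \(x = y\), which is a contradiction. Conversely, any \(\rho^*(x)\) with \(x \in X\) that lies outside \(\rho^*(Y)\) must have \(x \notin Y\). So the "more generally" clause in the hypothesis is consistent with the injective case.

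The main obstacle is not the algebra but making sure \(\rho^*\) is applied to subsets of a common domain. The local supports \(L_S\) must first be regarded inside \(G\) via the comparison maps \(\iota_S\) of Definition~\ref{def:interaction-residue}, and \(\rho^*\) must be defined on all of \(G\) so that \(\rho^*(L)\) makes sense. I would make this explicit by reading every support through \(\iota_S\) before transporting. Once this is done, the proof is the three-line computation above.
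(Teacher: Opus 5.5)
Your proposal is correct and follows the paper's proof: set $G, L, \widetilde{G}, \widetilde{L}$, substitute the compatibility conditions $\widetilde{G}=\rho^*(G)$ and $\widetilde{L}=\rho^*(L)$, and apply the difference-preservation property of $\rho^*$. Your extra checks are small refinements the paper leaves implicit: that $L \subseteq G$, so the hypothesis stated for $Y \subseteq X$ applies, and that injectivity alone gives difference preservation.
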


\begin{proof}
Set
\[
G := \operatorname{supp}\bigl(\sigma(F_*(A))\bigr),
\qquad
L := \bigcup_{S \in \mathrm{Str}(P)} \operatorname{supp}\bigl(\sigma(F_S(A_S))\bigr),
\]
and similarly define
\[
\widetilde{G} := \operatorname{supp}\bigl(\sigma(F_*(\widetilde{A}))\bigr),
\qquad
\widetilde{L} := \bigcup_{\widetilde{S} \in \mathrm{Str}(\widetilde{P})}
\operatorname{supp}\bigl(\sigma(F_{\widetilde{S}}(\widetilde{A}_{\widetilde{S}}))\bigr).
\]

By definition of the interaction residue (Definition~\ref{def:interaction-residue}),
\[
\Sigma^{\mathrm{res}}(F, A, P) = G \setminus L,
\qquad
\Sigma^{\mathrm{res}}(F, \widetilde{A}, \widetilde{P}) = \widetilde{G} \setminus \widetilde{L}.
\]

By the refinement compatibility assumptions,
\[
\widetilde{G} = \rho^*(G),
\qquad
\widetilde{L} = \rho^*(L).
\]

Since \(\rho^*\) is injective (or more generally a set-theoretic embedding) and preserves relative set differences by hypothesis, we have
\[
\rho^*(G \setminus L) = \rho^*(G) \setminus \rho^*(L).
\]

Therefore,
\[
\Sigma^{\mathrm{res}}(F, \widetilde{A}, \widetilde{P})
= \widetilde{G} \setminus \widetilde{L}
= \rho^*(G) \setminus \rho^*(L)
= \rho^*(G \setminus L)
= \rho^*\bigl(\Sigma^{\mathrm{res}}(F, A, P)\bigr).
\]

This proves the support-level functoriality of the residue under interaction-preserving refinement.
\end{proof}

The theorem shows that, under the stated admissibility and compatibility
assumptions, refinement does not create new residual spectral support at the
support level. Rather, it transports the same interaction residue to a finer
stratified description:
\[
\Sigma^{\mathrm{res}}(F,\widetilde A,\widetilde P)
=
\rho^*\Sigma^{\mathrm{res}}(F,A,P).
\]
Thus, admissible refinement should be understood as resolving the same
interaction-generated residue at finer spectral resolution, rather than
introducing new residue geometry at the level of spectral support. (Higher-level
data such as multiplicities, Jordan structure, or interface labels may be
refined, but the underlying spectral support of the residue is transported
coherently.)

This behavior is analogous to refinement of triangulations or refinement of
covers: the local description changes, while the underlying support-level
invariant is transported coherently.

\begin{remark}[Compatibility with localized defects]
\label{rem:refinement-localized-defects}
When the interface localization decomposition is available,
\[
\Sigma^{\mathrm{res}}(F,A,P)
=
\bigsqcup_{I \in \mathcal{I}(P)} \mathcal{L}_I(F,A,P),
\]
where \(\bigsqcup\) is understood in the interface-labeled sense (i.e., the same
spectral value may appear from different interfaces but is distinguished by its
originating interface), an admissible refinement may split a coarse interface
defect into refined interface-supported pieces:
\[
\mathcal{L}_I(F,A,P)
\;\rightsquigarrow\;
\{\mathcal{L}_{\widetilde I}(F,\widetilde A,\widetilde P)\}_{\widetilde I \mapsto I}.
\]
The total residual support is preserved under the refinement transport, even
though the individual localized pieces may become more finely resolved.
\end{remark}

\begin{remark}[Independence under compatible refinements]
\label{rem:refinement-independence}
If two admissible stratifications are related by a common refinement satisfying
the same spectral conservativity and local-support compatibility assumptions,
then their residue sets are canonically identified after transport to the
common refined spectral support. In this restricted sense,
\(\Sigma^{\mathrm{res}}\) is independent of the chosen admissible
stratification at the level of spectral support.
\end{remark}

\begin{corollary}[Preservation of localized spectral support under refinement]
\label{cor:refinement-invariance}
Assume the hypotheses of Theorem~\ref{thm:refinement-functoriality}. Let
\[
\mathcal{L}_I(F,A,P)
\]
be a localized interface defect associated with an admissible interface
\(I\). Assume further that the localized interface decomposition is compatible
with the refinement transport, and that the refinement transport induces a
homeomorphism between the realized support spaces associated with \(I\) and
\(\widetilde{I}\).

Then the localized spectral support is preserved under refinement up to the
canonical transport induced by \(\rho\):
\[
\rho^*\bigl(\mathcal{L}_I(F,A,P)\bigr)
=
\mathcal{L}_{\widetilde{I}}(F,\widetilde{A},\widetilde{P}),
\]
where \(\widetilde{I}\) denotes the refined interface corresponding to \(I\).

Consequently, support-level topological properties of the localized defect,
such as isolated spectral points, connected components, or connected
spectral-band support features, are preserved under refinement transport
(i.e., are unchanged up to the homeomorphism induced by \(\rho^*\)).
\end{corollary}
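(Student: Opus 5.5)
The plan is to treat the corollary as a support-level statement and reduce it to Theorem~\ref{thm:refinement-functoriality} together with the compatibility hypothesis on the localized decomposition. Write $\Sigma := \Sigma^{\mathrm{res}}(F,A,P)$ and $\widetilde\Sigma := \Sigma^{\mathrm{res}}(F,\widetilde A,\widetilde P)$. By the definition in Theorem~\ref{thm:interface-localization},
\[
\mathcal{L}_I(F,A,P)=\Sigma\cap\operatorname{Supp}_I(\tau_I).
\]
I will read the assumption that the decomposition is \emph{compatible with the refinement transport} as
\[
\operatorname{Supp}_{\widetilde I}(\tau_{\widetilde I})=\rho^*\bigl(\operatorname{Supp}_I(\tau_I)\bigr)
\]
for the refined interface $\widetilde I$ corresponding to $I$. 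If several refined interfaces map to $I$, I will take $\widetilde I$ to denote their union, consistent with Remark~\ref{rem:refinement-localized-defects}.

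The first step is the equality of sets. Because $\rho^*$ is injective, it preserves binary intersections: $\rho^*(X\cap Y)=\rho^*(X)\cap\rho^*(Y)$. Indeed, if $\rho^*(x)=\rho^*(y)$ with $x\in X$ and $y\in Y$, then $x=y\in X\cap Y$, which gives the nontrivial inclusion. Theorem~\ref{thm:refinement-functoriality} gives $\widetilde\Sigma=\rho^*(\Sigma)$. Hence
\[
\rho^*\bigl(\mathcal{L}_I\bigr)=\rho^*(\Sigma)\cap\rho^*\bigl(\operatorname{Supp}_I(\tau_I)\bigr)=\widetilde\Sigma\cap\operatorname{Supp}_{\widetilde I}(\tau_{\widetilde I})=\mathcal{L}_{\widetilde I}(F,\widetilde A,\widetilde P).
\]
This is the displayed identity.

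The second step is the topological consequence. By hypothesis, $\rho^*$ restricts to a homeomorphism between the realized support spaces for $I$ and $\widetilde I$. Its restriction to the subspace $\mathcal{L}_I$ is therefore a homeomorphism onto its image, which by the first step is $\mathcal{L}_{\widetilde I}$. Homeomorphisms preserve isolated points, connected components, path-connected band pieces, and every other topological invariant of subspaces. So each such feature of $\mathcal{L}_I$ corresponds bijectively to one of $\mathcal{L}_{\widetilde I}$.

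The main obstacle is pinning down what ``compatible with the refinement transport'' means, since the paper does not define it precisely. If it is instead stated directly at the level of defects, as $\rho^*(\mathcal{L}_I)=\mathcal{L}_{\widetilde I}$, the first step becomes tautological. In that case the only content is the topological step. I will therefore state the interface-support form of the hypothesis explicitly and note that injectivity of $\rho^*$ is exactly what lets intersections commute with transport. Without injectivity only the inclusion $\rho^*(\mathcal{L}_I)\subseteq\mathcal{L}_{\widetilde I}$ would survive.
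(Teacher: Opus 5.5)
Your argument is correct and has the same three-part outline as the paper's proof: Theorem~\ref{thm:refinement-functoriality} for the global residue, the compatibility hypothesis for the localized pieces, and restriction of the homeomorphism for the topological consequences. Where you differ is in how the equality $\rho^*(\mathcal{L}_I(F,A,P))=\mathcal{L}_{\widetilde I}(F,\widetilde A,\widetilde P)$ is obtained.

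The paper essentially reads it off from ``compatibility'' and cites the homeomorphism as the reason. A homeomorphism that preserves topological type cannot by itself identify the image set with $\mathcal{L}_{\widetilde I}$, so in the paper the equality is really the compatibility assumption stated at the level of defects. That is the tautological reading you flag at the end.

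You instead read compatibility as transport of interface supports, $\rho^*(\operatorname{Supp}_I(\tau_I))=\operatorname{Supp}_{\widetilde I}(\tau_{\widetilde I})$. You then derive the defect-level equality from $\Sigma^{\mathrm{res}}(F,\widetilde A,\widetilde P)=\rho^*(\Sigma^{\mathrm{res}}(F,A,P))$, using the fact that injective maps commute with intersections. The homeomorphism is used only where it is needed, for the topological conclusion. This turns the equality into a genuine consequence of Theorem~\ref{thm:refinement-functoriality}. It also makes visible that injectivity of $\rho^*$ is exactly what upgrades $\subseteq$ to $=$.

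Two small remarks. First, your intersection step needs $\rho^*$ to be a single injective point map defined on a set containing both $\Sigma^{\mathrm{res}}(F,A,P)$ and $\operatorname{Supp}_I(\tau_I)$, not merely on subsets of the global support. That is the natural reading of the theorem's injectivity hypothesis, but it is worth stating. Second, if the realized support space of $I$ is taken to be $\operatorname{Supp}_I(\tau_I)$, your version of compatibility is not an extra assumption. The hypothesized homeomorphism induced by $\rho^*$ is in particular a bijection $\operatorname{Supp}_I(\tau_I)\to\operatorname{Supp}_{\widetilde I}(\tau_{\widetilde I})$, which already gives the support-transport identity you posit.
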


\begin{proof}
By Theorem~\ref{thm:refinement-functoriality}, admissible
interaction-preserving refinement transports the global residue support
canonically:
\[
\Sigma^{\mathrm{res}}(F,\widetilde{A},\widetilde{P})
=
\rho^*\bigl(\Sigma^{\mathrm{res}}(F,A,P)\bigr).
\]

Assume that the localized decomposition of the residue is compatible with
the refinement transport. Since the refinement transport induces a
homeomorphism between the realized support spaces associated with \(I\) and
\(\widetilde{I}\), the corresponding localized residual contribution is
transported to the refined interface \(\widetilde{I}\) without changing its
support-level topological type. Hence
\[
\rho^*\bigl(\mathcal{L}_I(F,A,P)\bigr)
=
\mathcal{L}_{\widetilde{I}}(F,\widetilde{A},\widetilde{P}).
\]

Because homeomorphisms preserve topological properties of supports, the
associated support-level features (isolated points, connected components,
and connected spectral-band support features) are preserved under the
refinement transport.
\end{proof}

\begin{remark}[Algebraic refinement under refinement]
\label{rem:refinement-jordan}
When the interaction operators admit compatible algebraic realizations under
refinement (for example, similarity-equivalent Hilbert realizations), Jordan
or nilpotent structure may also be preserved as an algebraic refinement of
the support-level defect classification. Such preservation is not guaranteed
by the support-level theorem alone and requires additional assumptions on
the transport map.
\end{remark}

\begin{remark}[Refinement as a computational tool]
\label{rem:refinement-computational}
Admissible interaction-preserving refinements may also be viewed as a
computational device. A coarse stratification can contain complicated or
singular interfaces whose associated localized defects are difficult to
analyze directly. Refining the stratification into a collection of simpler
interfaces may make the corresponding localized residual supports more
tractable.

The refinement transport property ensures that such computations remain
compatible with the original residue geometry at the support level. Thus,
one may compute localized defects on a sufficiently fine admissible
stratification and then transport the resulting support-level information
back to the original coarse description.
\end{remark}

\begin{remark}[Comparison with classical refinement principles]
\label{rem:refinement-analogies}
Stability under admissible refinement is reminiscent of several classical
refinement principles in geometry and topology. For example, refinement maps
in simplicial or singular homology preserve the associated homological
invariants, and Čech-type constructions remain compatible under refinement
of covers. Similarly, persistence modules in topological data analysis are
functorial under refinement of filtration parameters.

Theorem~\ref{thm:refinement-functoriality} provides an operadic
support-level analogue of such refinement-stability phenomena for localized
spectral defects.
\end{remark}

\subsection{Deformation Stability}
\label{subsec:deformation-stability}

A fundamental requirement for any geometric invariant is stability under admissible deformation. 
If the interaction residue is to be interpreted as a genuine geometric object rather than a fragile analytic artifact, then it should behave coherently when the underlying operadic system varies continuously.

In the present setting, deformation stability should be understood at the level of spectral support and interface-localized defect data. 
Small admissible changes of the operadic structure should not create uncontrolled residual contributions. 
Instead, the residue should vary continuously whenever the global and local spectral supports vary continuously in the chosen spectral topology.

There are two important qualifications. 
First, continuity of spectra requires a specified topology, such as the Hausdorff topology on compact spectral supports or a topology induced by norm-resolvent convergence of the associated interaction operators. 
Second, qualitative invariants such as eigenvalue multiplicities, band topology, or Jordan block structure are stable only under gap-preserving or spectrally regular deformations.

The purpose of this subsection is to formulate this stability principle for admissible deformations of stratified operadic algebras. 
The result shows that vanishing interaction forces vanishing residue, while nontrivial residue varies coherently under deformation when the associated spectral data are stable.

\begin{theorem}[Residue Stability Under Admissible Deformation]
\label{thm:deformation-stability}
Let \(\{A_t\}_{t \in [0,1]}\) be an admissible deformation of stratified
\(P\)-algebras, and let \(F: \mathcal{M} \to \mathcal{N}\) be a strong monoidal
base-change functor. Set
\[
G_t := \operatorname{supp}\bigl(\sigma(F_*(A_t))\bigr),
\qquad
L_t := \bigcup_{S \in \mathrm{Str}(P)} \operatorname{supp}\bigl(\sigma(F_S(A_{S,t}))\bigr),
\qquad
R_t := G_t \setminus L_t = \Sigma^{\mathrm{res}}(F, A_t, P).
\]

Assume the following:

\begin{enumerate}
    \item \textbf{(Continuity of global and local supports)} \(G_t\) and \(L_t\) vary continuously in the chosen spectral topology (e.g., Hausdorff metric on compact subsets of \(\mathbb{C}\)).
    
    \item \textbf{(Local separation and stability)} The decomposition \(G_t = L_t \cup R_t\) is locally separated: there exists \(\delta > 0\) (locally uniform in \(t\)) such that
    \[
    \operatorname{dist}(R_t, L_t) \ge \delta.
    \]
    Moreover, the residual component \(R_t\) is tracked by a continuous spectral projection or selection in the chosen spectral topology.
    
    \item \textbf{(Interface localization applies)} The Interface Localization Theorem (Theorem~\ref{thm:interface-localization}) holds for all \(t \in [0,1]\), so that
    \[
    \Sigma^{\mathrm{res}}(F, A_t, P) = \bigsqcup_{I \in \mathcal{I}(P)} \mathcal{L}_I(F, A_t, P).
    \]
\end{enumerate}

Then:

\begin{enumerate}
    \item \textbf{(Continuity of residue)} \(\Sigma^{\mathrm{res}}(F, A_t, P) = R_t\) varies continuously at the level of spectral support.
    
    \item \textbf{(Vanishing coupling)} If, moreover, all inter-strata coupling tensors vanish for every \(t\),
    \[
    \tau_I(t) = 0 \qquad \text{for all } I \in \mathcal{I}(P),\; t \in [0,1],
    \]
    and we adopt the defect-support convention \(\operatorname{supp}_I(0) = \emptyset\) (i.e., the zero coupling tensor contributes no defect support, in contrast to the ordinary operator spectrum \(\sigma(0) = \{0\}\)), then
    \[
    \Sigma^{\mathrm{res}}(F, A_t, P) = \emptyset \qquad \text{for all } t \in [0,1].
    \]
    
    \item \textbf{(Discrete invariants)} Under gap-preserving admissible deformations, isolated residual components and their support-level counts (e.g., the number of isolated residual spectral points) remain locally constant, provided the relevant finite-multiplicity spectral projections vary continuously and no eigenvalue collision occurs. Finer algebraic data, such as nilpotent depth (Jordan block structure), is not stable under general perturbations; its constancy requires the deformation to stay within a fixed algebraic stratum (e.g., fixed Jordan form).
\end{enumerate}
\end{theorem}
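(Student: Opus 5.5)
The plan is to prove the three conclusions separately. Throughout, $R_t = G_t \setminus L_t$ is read as a set-valued map into compact subsets of $\mathbb{C}$, with the Hausdorff metric $d_H$.

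\textbf{Continuity of the residue.} Hypothesis~2 will carry the argument. Since $\operatorname{dist}(R_t,L_t)\ge\delta$, the set $R_t$ is relatively open and closed in $G_t$. It is precisely the part of $G_t$ lying outside the closed $\delta/2$-neighbourhood $N_{\delta/2}(L_t)$, and it is therefore compact.

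Fix $t_0$ and take $t$ close enough that $d_H(G_t,G_{t_0})<\delta/4$ and $d_H(L_t,L_{t_0})<\delta/4$.

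\emph{Upper estimate.} Let $\lambda\in R_{t_0}$. Continuity of $G_t$ gives some $\mu\in G_t$ with $|\lambda-\mu|<\delta/4$. Then $\operatorname{dist}(\mu,L_t)>\delta-\delta/4-\delta/4=\delta/2$, so $\mu\notin L_t$, and hence $\mu\in R_t$.

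\emph{Lower estimate.} The symmetric argument, using the uniform gap at $t$, shows that every point of $R_t$ is within $\delta/4$ of $R_{t_0}$.

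Sharpening $\delta/4$ to an arbitrary $\varepsilon<\delta/4$ yields $d_H(R_t,R_{t_0})\to 0$. This uses only hypotheses~1 and~2. The "continuous spectral projection" clause is not needed for support-level continuity, but I will record it as the mechanism that makes hypothesis~2 verifiable in practice, via Riesz projections $\frac{1}{2\pi i}\oint_\Gamma (z-T_t)^{-1}\,dz$ along a contour $\Gamma$ separating $R_t$ from $L_t$.

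\textbf{Vanishing coupling.} I will use hypothesis~3 together with the definition of $\mathcal{L}_I$ in Theorem~\ref{thm:interface-localization}, namely $\mathcal{L}_I=\Sigma^{\mathrm{res}}\cap\operatorname{Supp}_I(\tau_I)$.

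If $\tau_I(t)=0$, the stated convention gives $\operatorname{Supp}_I(\tau_I(t))=\emptyset$, so every $\mathcal{L}_I(F,A_t,P)$ is empty. The residual coverage condition then forces $\Sigma^{\mathrm{res}}(F,A_t,P)\subseteq\bigcup_I\emptyset=\emptyset$ for every $t$.

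Alternatively, one could invoke Corollary~\ref{cor:vanishing-residue-interface} pointwise in $t$. I prefer the coverage argument because it uses only proved set-level statements.

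\textbf{Discrete invariants.} For a gap-preserving deformation, let $\lambda_0$ be an isolated point of $R_{t_0}$.

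I will choose a small circle $\Gamma$ around $\lambda_0$ that meets no other point of $G_{t_0}$. Hausdorff continuity of $G_t$ keeps $\Gamma$ inside the resolvent set for $t$ near $t_0$. The Riesz projection $P_t$ along $\Gamma$ then varies continuously in norm; this is exactly the assumed continuity of the finite-multiplicity projections.

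The rank of $P_t$ is therefore locally constant, since norm-close projections have equal rank. The assumption of no eigenvalue collision (no splitting of a cluster) ensures that the disc bounded by $\Gamma$ contains exactly one point of $R_t$. Summing over the finitely many isolated points of the compact set $R_{t_0}$ shows that the count is locally constant, and connectedness of $[0,1]$ makes it constant on each gap-preserving interval.

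For the negative statement about Jordan depth, the Jordan-block example of Section~\ref{subsec:nilpotent-central-role} serves as a counterexample. There the $(2,1)$-perturbation splits the block into eigenvalues $\lambda_0\pm\sqrt{\varepsilon}$, so the depth jumps from $2$ to $1$ while the rank of the total projection stays constant. This shows that depth is stable only within a fixed similarity stratum, where it is constant by definition.

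\textbf{Main obstacle.} The delicate point is the first part: one must make sure the gap $\delta$ is uniform on a neighbourhood of $t_0$, so that it can be used at both $t$ and $t_0$. Hypothesis~2 states "locally uniform", and I will take that literally. Without it, residual points could be absorbed into $L_t$ at a touching point, which is precisely the phase-transition scenario the theorem excludes.
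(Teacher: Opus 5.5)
Your proposal is correct in substance, and for the first and third conclusions it takes a different and more rigorous route than the paper.

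\emph{Continuity.} The paper gives no estimate. It says the gap prevents residual points from entering or leaving the local union. It then lets the second clause of hypothesis~2 (that $R_t$ is ``tracked by a continuous spectral projection or selection'') deliver the conclusion, which essentially builds the conclusion into the hypothesis. Your $\varepsilon$-argument shows that in the Hausdorff metric this clause is redundant: continuity of $G_t$ and $L_t$ plus a locally uniform gap already give $d_H(R_t,R_{t_0})\le\varepsilon$. Your two estimates also show that, for $t$ near $t_0$, $R_t=\emptyset$ if and only if $R_{t_0}=\emptyset$, so you need no convention for the Hausdorff distance to the empty set. The paper's phrasing is nominally topology-independent (Fell, Vietoris); yours is an actual proof, but specific to compact supports with the Hausdorff metric.

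\emph{Vanishing coupling.} This part is the paper's argument. Invoking the coverage condition is equivalent to invoking the decomposition in hypothesis~3, because $\mathcal{L}_I=\Sigma^{\mathrm{res}}\cap\operatorname{Supp}_I(\tau_I)$ makes $\Sigma^{\mathrm{res}}=\bigcup_I\mathcal{L}_I$ equivalent to $\Sigma^{\mathrm{res}}\subseteq\bigcup_I\operatorname{Supp}_I(\tau_I)$.

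\emph{Discrete invariants.} The paper only asserts that isolated components cannot merge, enter the local union, or disappear without a gap closing; your Riesz-projection argument makes this concrete. One sharpening: your Jordan example involves an eigenvalue collision at $\varepsilon=0$, so it only shows instability under general perturbations. The family $\bigl(\begin{smallmatrix}\lambda&\varepsilon\\0&\lambda\end{smallmatrix}\bigr)$ has spectrum $\{\lambda\}$ and projection rank $2$ for every $\varepsilon$, yet its depth drops from $2$ to $1$ at $\varepsilon=0$. So depth is unstable even in the regime where your count is constant.

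One step needs repair: ``the finitely many isolated points of the compact set $R_{t_0}$'' is false in general. A compact set can have infinitely many isolated points, e.g.\ $\{5\}\cup\{5+1/n:n\ge 1\}$, and then no finite family of circles gives a uniform $t$-neighbourhood. More importantly, circles around the isolated points of $R_{t_0}$ do not control isolated points of $R_t$ that appear near non-isolated parts of $R_{t_0}$. Take $L_t=\{10\}$, $G_0=[1,2]\cup\{10\}$ and $G_t=[1,2-t]\cup\{2,10\}$. Both supports are Hausdorff continuous and the gap stays $8$, yet the number of isolated residual points jumps from $0$ to $1$.

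The clean fix is to restrict to finite $R_{t_0}$, which is the purely discrete residue of the paper's examples. Then your lower estimate from the first part places $R_t$ inside the union of the discs for $t$ near $t_0$. With no splitting, each disc holds exactly one point, so the count is constant. For general $R_{t_0}$ you must read ``no eigenvalue collision'' as also excluding points that detach from, or accumulate onto, non-isolated residual components, and argue point by point.
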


\begin{proof}
By definition,
\[
\Sigma^{\mathrm{res}}(F, A_t, P) = G_t \setminus L_t = R_t.
\]

\paragraph{Continuity of residue.}
The assumed continuity of \(G_t\) and \(L_t\), together with the local separation condition \(\operatorname{dist}(R_t, L_t) \ge \delta > 0\) (locally uniform in \(t\)), prevents residual spectral points from entering or leaving the local spectral union. The additional assumption that \(R_t\) is tracked by a continuous spectral projection or selection ensures that the complement \(R_t = G_t \setminus L_t\) varies continuously in the chosen spectral topology. This proves the stability of the residue support.

\paragraph{Vanishing coupling.}
If all inter-strata coupling tensors vanish, then by the defect-support convention \(\operatorname{supp}_I(0) = \emptyset\), each localized defect satisfies
\[
\mathcal{L}_I(F, A_t, P) = \Sigma^{\mathrm{res}}(F, A_t, P) \cap \operatorname{supp}_I(\tau_I(t)) = \emptyset.
\]
Using the Interface Localization Theorem (hypothesis 3),
\[
\Sigma^{\mathrm{res}}(F, A_t, P) = \bigsqcup_{I \in \mathcal{I}(P)} \mathcal{L}_I(F, A_t, P) = \emptyset.
\]

\paragraph{Discrete invariants.}
Under a gap-preserving deformation, isolated residual components cannot merge with one another, enter the local spectral union, or disappear without a gap closing. Hence discrete support-level invariants, such as the number of isolated residual components, remain constant on connected components where the relevant spectral projections vary continuously and no eigenvalue collision occurs. Finer algebraic data, such as nilpotent depth (Jordan block structure), is not stable under general perturbations; its constancy requires the deformation to stay within a fixed algebraic stratum (e.g., fixed Jordan form).
\end{proof}

The theorem shows that, under admissible and spectrally controlled
deformations, the interaction residue may evolve continuously with the
operadic system (in the chosen spectral topology). In particular, under
suitable continuity and gap-preservation assumptions, the residue is not a
fragile artifact of a particular symbolic presentation, but a support-level
object that varies coherently with the interaction structure.

The decoupled regime is also stable within the class of deformations that
keep all inter-strata coupling tensors identically zero. In this case,
\[
\Sigma^{\mathrm{res}} = \emptyset
\]
for all deformation parameters, and the global spectral support remains
decomposable into local stratum supports under the support-level convention
used in Theorem~\ref{thm:deformation-stability}.

Conversely, nontrivial residual geometry may persist under small admissible
perturbations, provided no residual component collides with the local
spectral sector, disappears through a gap closing, or changes type through a
singular degeneration. In this sense, persistent residue supports indicate
stable interaction-defect regimes rather than accidental spectral behavior.

One may also interpret certain discontinuities of the residue as
defect-transition phenomena. For example, as coupling strength decreases,
localized spectral defects may collapse into the local spectrum or disappear
after a gap closes. Conversely, the creation of new admissible interface
interactions may produce new localized residual sectors when new gaps open.

\begin{remark}[Continuity and choice of topology]
\label{rem:deformation-continuity}
The notion of continuity in Theorem~\ref{thm:deformation-stability} depends
on the topology chosen on the space of spectral sets. Common choices include
the Hausdorff metric on compact subsets of \(\mathbb{C}\), the Fell topology
for closed noncompact subsets, and the Vietoris topology for general closed
spectral sets.

For Theorem~\ref{thm:deformation-stability}, one assumes that the admissible
deformation induces continuous variation of the global, local, and residual
spectral supports in the chosen topology. In operator-theoretic realizations,
such continuity often follows from additional regularity assumptions such as
compactness, normality, norm continuity, or norm-resolvent continuity.
\end{remark}

\begin{remark}[Deformation invariants]
\label{rem:deformation-invariants}
Under gap-preserving admissible deformations, certain residue-derived
quantities may remain invariant. Isolated residual components and, when
finite-multiplicity spectral projections are available, their multiplicities
are stable as long as they remain separated from the local spectral sector.
Band-type invariants, such as Chern numbers or winding numbers, require
additional topological structure and are stable only when the relevant
spectral gaps remain open. Fractal dimensions or scaling exponents may be
stable when the deformation preserves the self-similar or renormalization
structure of the interface.

Nilpotent depth and Jordan block data require special care: they are stable
only under deformations constrained to preserve the corresponding algebraic
degeneracy, since generic perturbations may split Jordan blocks. Thus these
quantities should be interpreted as algebraic refinements rather than
ordinary support-level invariants.
\end{remark}

\begin{corollary}[Persistence of spectral defect regime under admissible deformation]
\label{cor:deformation-constancy}
Let $\{A_t\}_{t \in [0,1]}$ be an admissible deformation of stratified $P$-algebras, and let $F: \mathcal{M} \to \mathcal{N}$ be a strong monoidal base change functor inducing $F_*$ on algebras. Suppose the residue $\Sigma^{\mathrm{res}}(F, P, A_0)$ carries a spectral defect regime characterized by stable discrete invariants (for example, isolated defect multiplicity, protected band topology, or, under fixed algebraic-type constraints, nilpotent depth).

Then the same spectral defect regime persists (i.e., remains unchanged at the support/invariant level) for all $t$ provided the deformation avoids critical parameter values at which the relevant spectral structure becomes singular or ill-defined (for example, gap-closing transitions or Jordan degeneracies).
\end{corollary}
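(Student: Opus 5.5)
The plan is a connectedness (open--closed) argument on $[0,1]$, built on Theorem~\ref{thm:deformation-stability}. First I will make the statement precise. Fix the relevant discrete invariant $\mathfrak{d}(t)$ of the residue $\Sigma^{\mathrm{res}}(F,A_t,P)$. Examples are the number of isolated residual components with their multiplicities, a band invariant, or (within a fixed Jordan stratum) the nilpotent depth. Let $\mathcal{C}\subseteq[0,1]$ be the closed set of critical parameters where this structure is singular, meaning a gap closes, an eigenvalue collides with $L_t$, or the Jordan type degenerates. The hypothesis ``avoids critical parameter values'' I read as $\mathcal{C}=\emptyset$. On every $t$, the gap condition $\operatorname{dist}(R_t,L_t)\ge\delta_t>0$ and the continuity hypotheses of Theorem~\ref{thm:deformation-stability} then hold. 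The goal is to show that $t\mapsto\mathfrak{d}(t)$ is constant.

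\textbf{Step 1 (local constancy).} Fix $t_0$. By part (1) of Theorem~\ref{thm:deformation-stability}, $R_t$ varies continuously in the Hausdorff topology near $t_0$. By part (3), under the gap-preserving hypothesis the isolated residual components and their finite-multiplicity counts are locally constant. Concretely, I will choose disjoint small contours $\Gamma_j$ around the components of $R_{t_0}$, each at distance at least $\delta/2$ from $L_t$ and from the other components for $t$ near $t_0$. The Riesz projections $\frac{1}{2\pi i}\oint_{\Gamma_j}(z-T_t)^{-1}\,dz$ are then norm-continuous, so their ranks are constant. For band invariants, the Chern or winding numbers are integrals of continuous quantities built from these projections, so they are integer-valued and continuous, hence locally constant. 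For nilpotent depth, the fixed-algebraic-stratum assumption makes the Jordan type constant by fiat, and it is in particular locally constant. This shows that $\{t:\mathfrak{d}(t)=\mathfrak{d}(t_0)\}$ is open.

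\textbf{Step 2 (globalization).} A locally constant function on the connected space $[0,1]$ is constant. Hence $\mathfrak{d}(t)=\mathfrak{d}(0)$ for all $t$. Equivalently, the sets $\{t:\mathfrak{d}(t)=k\}$ are open and disjoint and cover $[0,1]$, so exactly one of them is nonempty.

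\textbf{Main obstacle.} The hard step is the uniformity of the gap. Theorem~\ref{thm:deformation-stability} gives $\delta$ only ``locally uniform in $t$.'' Moreover, the claim that avoiding $\mathcal{C}$ yields a positive gap at every $t$ depends on how ``critical'' is defined. I will define $\mathcal{C}$ exactly as the set where the hypotheses of Theorem~\ref{thm:deformation-stability} fail, so that avoidance is tautologically the needed condition. If a uniform constant is wanted, I will use compactness of $[0,1]$: the function $t\mapsto\operatorname{dist}(R_t,L_t)$ is lower semicontinuous under Hausdorff continuity, and it is positive, so it has a positive minimum. A secondary subtlety is that Jordan depth is not support-level. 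For that invariant the corollary is only as strong as the fixed-stratum assumption, and I will state this explicitly.
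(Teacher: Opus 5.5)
Your proposal is correct and takes essentially the same approach as the paper. The paper's proof is also just continuity from Theorem~\ref{thm:deformation-stability}, which makes well-defined discrete invariants locally constant, and connectedness of $[0,1]$, which makes them constant away from critical parameters. Your Riesz-projection argument for rank constancy and the compactness argument for a uniform gap make explicit what the paper's three-line proof leaves implicit.
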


\begin{proof}
By Theorem~\ref{thm:deformation-stability}, under the stated assumptions the residual spectral support varies continuously under admissible deformation. Discrete invariants that remain well-defined throughout the deformation are locally constant on connected parameter intervals. Hence such invariants remain constant unless the deformation passes through a critical configuration where the spectral structure changes discontinuously or ceases to be well-defined.
\end{proof}

\begin{remark}[Phase transitions and critical deformations]
\label{rem:deformation-critical}
The deformation stability theorem does not preclude phase transitions: at certain critical parameter values $t_*$, the residue may change discontinuously. 
In physical terms, these may correspond to:
\begin{itemize}
    \item Gap-closing transitions (where bands touch),
    \item Exceptional point degeneracies (where Jordan block size changes),
    \item Or geometric scaling transitions in singular interfaces.
\end{itemize}
Such critical deformations are precisely where the deformation invariants may change, signaling a transition between distinct spectral defect phases. 
The classification of these critical phenomena is a natural extension of the present work.
\end{remark}

\begin{remark}[Comparison with refinement functoriality]
\label{rem:deformation-refinement-comparison}
The deformation stability theorem complements the refinement functoriality theorem (Theorem~\ref{thm:refinement-functoriality}):
\begin{itemize}
    \item \textbf{Refinement functoriality} handles discrete changes in the stratification (coarse → fine) under admissible interaction-preserving refinements.
    \item \textbf{Deformation stability} handles continuous changes in the operadic structure ($t \in [0,1]$) under spectrally controlled, gap-preserving conditions.
\end{itemize}
Together, these theorems suggest that $\Sigma^{\mathrm{res}}$ behaves as a robust support-level geometric invariant under admissible interaction-preserving refinements and spectrally controlled deformations, changing only at critical transitions where the relevant interaction geometry or spectral topology changes.
\end{remark}

\subsection{Homological and Homotopic Stability of Residual Spectral Families}
\label{subsec:homological-residue-stability}

The analytic deformation stability established in Theorem~\ref{thm:deformation-stability} guarantees that the residual spectral support $R_t = \Sigma^{\mathrm{res}}(F,A_t,P)$ varies continuously in the Hausdorff metric. However, continuity alone does not preserve topological invariants such as homology, cohomology, or homotopy groups. To obtain such algebraic-topological stability, we introduce an additional geometric assumption: local triviality of the residual family.

The following theorem shows that under this assumption, the residual supports are homeomorphic (hence homotopy equivalent), and their homology, cohomology, and Betti numbers are invariant throughout the deformation.

\begin{theorem}[Homological Stability of Residual Spectral Families]
\label{thm:homological-residue-stability}
Let $\{A_t\}_{t\in[0,1]}$, $F$, $G_t$, $L_t$, and
$R_t=\Sigma^{\mathrm{res}}(F,A_t,P)$ be as in
Theorem~\ref{thm:deformation-stability}. Assume, in addition, that the residual trace
\[
\mathfrak{R}:=\bigcup_{t\in[0,1]}\{t\}\times R_t
\;\subset\; [0,1]\times\mathbb{C}
\]
is locally trivial over $[0,1]$. That is, the projection
\[
\pi:\mathfrak{R}\longrightarrow [0,1],\qquad \pi(t,z)=t,
\]
is a locally trivial bundle with fiber $R_t$.

Then the residual spectral family is topologically stable. In particular, for all
$s,t\in[0,1]$, the residual supports $R_s$ and $R_t$ are homeomorphic (and therefore homotopy equivalent). Consequently,
\[
H_k(R_s;\mathbb{K}) \cong H_k(R_t;\mathbb{K})
\qquad
\text{and}
\qquad
H^k(R_s;\mathbb{K}) \cong H^k(R_t;\mathbb{K})
\]
for every $k\geq 0$ and every coefficient field $\mathbb{K}$.

Moreover, if the interface localization decomposition
\[
R_t =
\bigsqcup_{I\in\mathcal{I}(P)}
\mathcal{L}_I(F,A_t,P)
\]
is itself preserved as a locally trivial decomposition, then each localized defect family
\[
\mathcal{L}_I(F,A_t,P)
\]
has locally constant homology and cohomology. Hence the Betti numbers
\[
\beta_k\bigl(\mathcal{L}_I(F,A_t,P)\bigr)
=
\dim_{\mathbb{K}} H_k\bigl(\mathcal{L}_I(F,A_t,P);\mathbb{K}\bigr)
\]
are invariant under the hypotheses of this theorem.
\end{theorem}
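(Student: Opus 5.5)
The argument is purely topological. The analytic hypotheses inherited from Theorem~\ref{thm:deformation-stability} are not needed beyond making $R_t$ and $\mathfrak{R}$ well defined; the real engine is the local triviality of $\pi:\mathfrak{R}\to[0,1]$.

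\textbf{Step 1: local homeomorphism of fibres.} Local triviality gives, for each $t_0\in[0,1]$, an open neighbourhood $U_{t_0}\ni t_0$ in $[0,1]$ and a homeomorphism
\[
\phi_{t_0}:\pi^{-1}(U_{t_0})\to U_{t_0}\times R_{t_0}
\]
commuting with the projections. Restricting $\phi_{t_0}$ to the fibre over any $t\in U_{t_0}$ gives a homeomorphism $R_t\cong R_{t_0}$, where $R_t$ is identified with $\{t\}\times R_t\subset\mathfrak{R}$ with the subspace topology.

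\textbf{Step 2: from local to global.} Define an equivalence relation on $[0,1]$ by $s\sim t$ iff $R_s\cong R_t$. By Step~1 every equivalence class is open. Its complement is a union of the other classes, so each class is also closed. Since $[0,1]$ is connected, there is a single class. Equivalently, compactness yields a Lebesgue number, so finitely many trivializing intervals can be chained from $s$ to $t$, composing homeomorphisms on overlaps. Either way, $R_s\cong R_t$ for all $s,t$.

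\textbf{Step 3: algebraic invariants.} Homeomorphic spaces are homotopy equivalent. Singular homology and cohomology with coefficients in any field $\mathbb{K}$ are homotopy invariant functors, which gives the isomorphisms $H_k(R_s;\mathbb{K})\cong H_k(R_t;\mathbb{K})$ and $H^k(R_s;\mathbb{K})\cong H^k(R_t;\mathbb{K})$.

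\textbf{Step 4: the localized refinement.} I will read ``preserved as a locally trivial decomposition'' as the requirement that each subfamily
\[
\mathfrak{L}_I:=\bigcup_t\{t\}\times\mathcal{L}_I(F,A_t,P)
\]
is locally trivial over $[0,1]$, or at least that the trivializations $\phi_{t_0}$ carry $\mathfrak{L}_I\cap\pi^{-1}(U_{t_0})$ onto $U_{t_0}\times\mathcal{L}_I(F,A_{t_0},P)$. Running Steps~1--3 for each $I$ gives a homeomorphism type of $\mathcal{L}_I(F,A_t,P)$ that is constant in $t$, and hence constant homology and cohomology. For the Betti numbers, $\beta_k=\dim_{\mathbb{K}}H_k$ is then constant whenever it is finite. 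If it is infinite, the statement is that the cardinals agree, which still follows from the isomorphism.

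\textbf{Main obstacle.} The step I expect to need the most care is the hypothesis bookkeeping in Step~4. It must be stated clearly that local triviality of $\mathfrak{R}$ alone does not force the pieces $\mathcal{L}_I$ to be carried to one another. The theorem's ``moreover'' clause supplies exactly this compatibility, so I will use it explicitly rather than try to derive it.

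A minor point in Step~1 is that the fibres must carry the subspace topology from $\mathbb{C}$, so that the homeomorphisms produced are homeomorphisms of the residual supports themselves. Nothing from gap conditions or Hausdorff continuity is required. I would remark that they merely make it plausible that local triviality holds in examples, for instance via a continuous spectral selection that is an isotopy.
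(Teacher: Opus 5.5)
Your proof is correct, but its key step differs from the paper's. The paper first upgrades local triviality to a global trivialization $\Phi:\mathfrak{R}\xrightarrow{\cong}[0,1]\times R_0$. For this it cites the bundle-theoretic fact that a locally trivial bundle over the contractible base $[0,1]$ is trivial, and then it reads off the fibrewise homeomorphisms $\phi_t:R_t\to R_0$ by restricting $\Phi$. Everything after that coincides with your Steps~3 and~4. Homeomorphism gives homotopy equivalence, which gives isomorphic homology and cohomology. The same argument is then applied to each $\mathcal{L}_I$, assuming each localized family is itself locally trivial, which is your first reading of the ``moreover'' hypothesis.

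You never build a global trivialization. You only use that the homeomorphism type of the fibre is locally constant and that $[0,1]$ is connected, via clopen classes or a Lebesgue-number chain. This is more elementary and somewhat more general:
\begin{itemize}
\item It avoids the bundle-classification fact the paper invokes. In the generality of arbitrary fibres that fact needs some care, although over an interval it can be checked directly by gluing trivializations over finitely many subintervals.
\item It applies verbatim over any connected parameter space. So it is connectedness, not contractibility of $[0,1]$, that drives the fibre-homeomorphism conclusion.
\end{itemize}

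What the paper's route buys in return is a global product structure on the residual trace, with identifications $\phi_t$ varying continuously in $t$. That is a stronger form of ``topological stability'' than the abstract homeomorphism type your chaining produces, though the latter is all the stated conclusions require.

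Your observation that $\dim_{\mathbb{K}}H_k$ may be infinite, so that the Betti-number claim is really an equality of cardinals, is a correct refinement the paper glosses over. You are also right to invoke the compatibility hypothesis on the $\mathcal{L}_I$ explicitly rather than try to derive it from local triviality of $\mathfrak{R}$.
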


\begin{proof}
We prove the theorem in several steps.

\medskip
\noindent\textbf{Step 1: Local triviality implies global triviality over $[0,1]$.}
By hypothesis, $\pi: \mathfrak{R} \to [0,1]$ is a locally trivial bundle. This means that for every $t_0 \in [0,1]$, there exists an open interval $U \subseteq [0,1]$ containing $t_0$ and a homeomorphism
\[
\varphi_U: \pi^{-1}(U) \xrightarrow{\cong} U \times R_{t_0}
\]
such that $\pi|_{\pi^{-1}(U)} = \operatorname{proj}_U \circ \varphi_U$, where $\operatorname{proj}_U: U \times R_{t_0} \to U$ is the projection onto the first factor.

Since $[0,1]$ is contractible (it deformation retracts to a point), every locally trivial bundle over $[0,1]$ is globally trivializable. This is a standard result in bundle theory (see, e.g., Steenrod, \emph{The Topology of Fibre Bundles}). Consequently, there exists a homeomorphism
\[
\Phi: \mathfrak{R} \xrightarrow{\cong} [0,1] \times R_0
\]
such that $\pi = \operatorname{proj}_{[0,1]} \circ \Phi$.

\medskip
\noindent\textbf{Step 2: Fiberwise homeomorphism.}
Restricting $\Phi$ to each fiber gives, for every $t \in [0,1]$, a homeomorphism
\[
\phi_t: R_t \xrightarrow{\cong} R_0,
\]
defined by $\phi_t(x) = \operatorname{pr}_{R_0}(\Phi(t,x))$, where $\operatorname{pr}_{R_0}: [0,1] \times R_0 \to R_0$ is the projection onto the second factor. Hence $R_t$ and $R_0$ are homeomorphic for all $t$. In particular, for any $s,t \in [0,1]$, $R_s \cong R_t$ via $\phi_t^{-1} \circ \phi_s$.

\medskip
\noindent\textbf{Step 3: Homeomorphism implies homotopy equivalence.}
Any homeomorphism is automatically a homotopy equivalence. Explicitly, if $f: R_t \to R_0$ is a homeomorphism with inverse $f^{-1}: R_0 \to R_t$, then $f^{-1} \circ f = \operatorname{id}_{R_t}$ and $f \circ f^{-1} = \operatorname{id}_{R_0}$. Thus $R_t \simeq R_0$ as topological spaces.

\medskip
\noindent\textbf{Step 4: Homology and cohomology invariance.}
Homology and cohomology are functorial with respect to continuous maps. Since $f: R_t \to R_0$ is a homeomorphism, it induces isomorphisms on all homology and cohomology groups. For any coefficient field $\mathbb{K}$ and any $k \ge 0$,
\[
f_*: H_k(R_t;\mathbb{K}) \xrightarrow{\cong} H_k(R_0;\mathbb{K}),
\qquad
f^*: H^k(R_0;\mathbb{K}) \xrightarrow{\cong} H^k(R_t;\mathbb{K}).
\]
Therefore $H_k(R_s;\mathbb{K}) \cong H_k(R_t;\mathbb{K})$ and $H^k(R_s;\mathbb{K}) \cong H^k(R_t;\mathbb{K})$ for all $s,t \in [0,1]$.

\medskip
\noindent\textbf{Step 5: Localized defect families.}
Now assume that the interface localization decomposition
\[
R_t = \bigsqcup_{I \in \mathcal{I}(P)} \mathcal{L}_I(t)
\]
is itself preserved as a locally trivial decomposition. This means that for each interface $I$, the family $\{\mathcal{L}_I(t)\}_{t \in [0,1]}$ forms a locally trivial bundle over $[0,1]$. Applying the same argument as in Steps 1-4 to each $\mathcal{L}_I(t)$ individually, we obtain that for each $I$ and for all $s,t \in [0,1]$, $\mathcal{L}_I(s) \cong \mathcal{L}_I(t)$ as topological spaces, and hence
\[
H_k(\mathcal{L}_I(s);\mathbb{K}) \cong H_k(\mathcal{L}_I(t);\mathbb{K})
\]
for all $k \ge 0$.

\medskip
\noindent\textbf{Step 6: Invariance of Betti numbers.}
The Betti numbers are defined as the dimensions of the homology groups over the field $\mathbb{K}$:
\[
\beta_k(\mathcal{L}_I(t)) = \dim_{\mathbb{K}} H_k(\mathcal{L}_I(t);\mathbb{K}).
\]
Since the homology groups are isomorphic for all $t$, their dimensions are equal. Hence $\beta_k(\mathcal{L}_I(t))$ is independent of $t$.
\end{proof}

\begin{remark}[On the local triviality assumption]
\label{rem:local-triviality-discussion}
The gap condition $\operatorname{dist}(R_t, L_t) \ge \delta > 0$ from Theorem~\ref{thm:deformation-stability} ensures that $R_t$ remains separated from the local spectral sector $L_t$ throughout the deformation. However, this condition alone does \emph{not} guarantee that the family $\{R_t\}$ is locally trivial; it only prevents collision between $R_t$ and $L_t$. The additional local triviality hypothesis is stronger and guarantees that no topological changes occur within $R_t$ itself (e.g., no splitting, merging, or disappearance of components). When local triviality holds together with the gap condition, the residue is fully topologically stable.
\end{remark}

\begin{remark}[On contractibility of the parameter space]
\label{rem:contractibility-remark}
The proof crucially uses the contractibility of the parameter interval $[0,1]$. For a more general parameter space $M$, local triviality implies that fibers over points in the same path component are homeomorphic. However, if $M$ is not simply connected, monodromy actions on homology may appear, leading to richer invariants. Such generalizations are left for future work.
\end{remark}

The following corollary records the homotopy-theoretic consequences of Theorem~\ref{thm:homological-residue-stability}. Unlike more ambitious claims, this corollary stays within the safe and mathematically justified territory.

\begin{corollary}[Homotopy Invariance of Residual Supports]
\label{cor:homotopy-invariance}
Under the hypotheses of Theorem~\ref{thm:homological-residue-stability}, the residual supports $R_t$ are mutually homotopy equivalent. Consequently,
\[
\pi_k(R_t) \cong \pi_k(R_0) \quad \text{for all } k \ge 1 \text{ and all } t \in [0,1].
\]
In particular, the fundamental group $\pi_1(R_t)$ and all higher homotopy groups are deformation invariants.
\end{corollary}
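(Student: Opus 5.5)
The plan is to derive the corollary directly from the fiberwise homeomorphisms produced in the proof of Theorem~\ref{thm:homological-residue-stability}. I will not argue abstractly from ``homotopy equivalence'' alone. Under the local triviality hypothesis, Step~1 of that proof gives a global trivialization $\Phi:\mathfrak{R}\xrightarrow{\cong}[0,1]\times R_0$ over the contractible base $[0,1]$. Step~2 then gives, for every $t$, a homeomorphism $\phi_t:R_t\to R_0$. The first claim, that the $R_t$ are mutually homotopy equivalent, is then immediate: $\phi_t^{-1}\circ\phi_s:R_s\to R_t$ is a homeomorphism, and every homeomorphism is a homotopy equivalence with its inverse as homotopy inverse. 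This is exactly Step~3 of the theorem.

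For the homotopy groups I will make the basepoints explicit, since $\pi_k$ is a pointed invariant and $R_t$ need not be path connected. Fix $t$ and a basepoint $x_t\in R_t$, and set $x_0:=\phi_t(x_t)\in R_0$. Then $\phi_t:(R_t,x_t)\to(R_0,x_0)$ is a pointed homeomorphism. By functoriality of $\pi_k$ on pointed spaces, $(\phi_t)_*$ and $(\phi_t^{-1})_*$ are mutually inverse group homomorphisms (set bijections when $k=0$). Hence $\pi_k(R_t,x_t)\cong\pi_k(R_0,\phi_t(x_t))$ for all $k\ge 1$. In the same way $\phi_t$ induces a bijection $\pi_0(R_t)\cong\pi_0(R_0)$, so path components correspond. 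Within a single path component the isomorphism class of $\pi_k$ is independent of basepoint, via change-of-basepoint along a path, with the usual $\pi_1$-action caveat only affecting canonicity, not the isomorphism type. Therefore the unpointed statement $\pi_k(R_t)\cong\pi_k(R_0)$ holds componentwise, under the matching of components given by $\phi_t$. Specializing to $k=1$ gives the statement about fundamental groups.

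The main obstacle is not technical but interpretive: the unpointed formula in the corollary is only meaningful after this basepoint and component bookkeeping. I will state the result as ``for each path component $C$ of $R_t$, $\pi_k(C)\cong\pi_k(\phi_t(C))$.'' When $R_t$ is path connected, for instance a single spectral band, this reduces literally to the displayed isomorphism. A second point worth recording is the sense in which these groups are ``deformation invariants.'' The isomorphism is canonical up to the choice of trivialization $\Phi$. Since $[0,1]$ is contractible, any two trivializations differ by a path in $\operatorname{Homeo}(R_0)$ that is homotopic to a constant, so the induced identifications agree up to the usual inner and basepoint ambiguities. This is consistent with Remark~\ref{rem:contractibility-remark}, which notes that monodromy can appear only for non-simply-connected parameter spaces.
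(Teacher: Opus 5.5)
Your proposal is correct and follows the same route as the paper: the homeomorphisms $R_t \cong R_0$ supplied by Theorem~\ref{thm:homological-residue-stability} are homotopy equivalences, and these induce isomorphisms on all homotopy groups. Your explicit basepoint and path-component bookkeeping (matching components via $\phi_t$ and reading the unpointed formula $\pi_k(R_t)\cong\pi_k(R_0)$ componentwise) sharpens the paper's one-line argument, which leaves these points implicit.
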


\begin{proof}
Theorem~\ref{thm:homological-residue-stability} establishes that $R_t$ and $R_0$ are homeomorphic for each $t$. Homeomorphism implies homotopy equivalence, and homotopy equivalence induces isomorphisms on all homotopy groups. The result follows.
\end{proof}

\begin{remark}[Comparison of analytic and topological stability]
\label{rem:stability-comparison}
The original deformation stability theorem (Theorem~\ref{thm:deformation-stability}) establishes that $R_t$ varies continuously in the Hausdorff metric. Theorem~\ref{thm:homological-residue-stability} strengthens this by showing that under the additional local triviality assumption, the entire topological type of $R_t$ is preserved. Corollary~\ref{cor:homotopy-invariance} further extracts homotopy invariants. Together, these results provide a coherent picture:

\[
\text{Analytic continuity} \;+\; \text{Gap preservation} \;+\; \text{Local triviality} \;\Longrightarrow\; \text{Homotopy invariance}.
\]

We emphasize that critical parameter values where local triviality fails (e.g., when a residual component collides with the local spectral sector, splits, merges, or disappears) correspond to topological phase transitions in the residual geometry. At such parameters, the homology, cohomology, and homotopy groups may change discontinuously.
\end{remark}

\section{Rigidity and the Vanishing Regime}
\label{sec:rigidity-vanishing-regime}

The previous sections established that the interaction residue $\Sigma^{\mathrm{res}}$ measures the failure of exact local-to-global spectral factorization. Nonvanishing residue corresponds to the presence of interaction-generated spectral contributions, while localized interface defects encode the geometry of these interactions.

A natural question therefore arises: what occurs when the interaction residue vanishes completely?

The vanishing of the residue identifies a distinguished phase of the theory, which we refer to as the \emph{spectral rigidity regime}. In this regime, no residual spectral contribution remains beyond the local spectral sectors. Consequently, the global spectral support decomposes entirely into the union of the local spectral supports associated with the individual strata.

From the perspective of interaction geometry, this corresponds to the disappearance of interface-generated spectral defects. The operadic system therefore behaves as a spectrally separable stratified structure, with no residual interaction contribution detected by the spectral functor.

We now formalize this notion.

\begin{definition}[Spectral rigidity regime]
\label{def:rigidity-regime}
Let $P$ be a stratified operad with stratum set $\mathrm{Str}(P)$, let $A$ be a $P$-algebra equipped with the induced stratification (with stratum algebras $A_S$ for $S \in \mathrm{Str}(P)$), and let $F: \mathcal{M} \to \mathcal{N}$ be a strong monoidal base change functor inducing $F_*$ on algebras.

The pair $(P, A)$ is said to lie in the \emph{spectral rigidity regime} (with respect to $F$) if the interaction residue vanishes:
\[
\Sigma^{\mathrm{res}}(F, A, P) = \emptyset.
\]

Recall that $\Sigma^{\mathrm{res}}(F, A, P) = G \setminus L$, where
\[
G := \operatorname{supp}\bigl(\sigma(F_*(A))\bigr), \qquad
L := \bigcup_{S \in \mathrm{Str}(P)} \operatorname{supp}\bigl(\sigma(F_S(A_S))\bigr).
\]
Hence $\Sigma^{\mathrm{res}} = \emptyset$ implies $G \subseteq L$. In the common case where local spectral sectors embed into the global spectral support, i.e., $L \subseteq G$, this is equivalent to
\[
\operatorname{supp}\bigl(\sigma(F_*(A))\bigr)
=
\bigcup_{S \in \mathrm{Str}(P)}
\operatorname{supp}\bigl(\sigma(F_S(A_S))\bigr).
\]
In this situation, the system is called \emph{spectrally separable}.
\end{definition}

The rigidity regime corresponds to the absence of nontrivial interface-generated spectral contributions. Equivalently, the local spectral sectors already account completely for the global spectral support.

Schematically,
\[
\text{rigidity}
\quad\Longleftrightarrow\quad
\text{absence of residual spectral defect geometry}.
\]

In this phase, interface localization disappears at the level of spectral support, and the global spectral geometry decomposes completely into local sectors. Thus, the vanishing regime represents the maximally factorizable support-level phase of stratified operadic spectral geometry.

The following proposition gives the fundamental characterization of this regime.

\begin{proposition}[Exact factorization in the rigidity regime]
\label{prop:exact-factorization}

If
\[
\Sigma^{\mathrm{res}}(F, A, P) = \emptyset,
\]
then the global spectral support decomposes exactly as the union of the local spectral supports:
\[
\operatorname{supp}\bigl(\sigma(F_*(A))\bigr)
=
\bigcup_{S \in \mathrm{Str}(P)}
\operatorname{supp}\bigl(\sigma(F_S(A_S))\bigr).
\]

Consequently, the stratified operadic system is spectrally separable at the support level: no additional interface-generated spectral contribution remains beyond the local sectors.
\end{proposition}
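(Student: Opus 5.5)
The plan is to derive the identity from the Local-to-Global Reconstruction Theorem (Theorem~\ref{thm:local-to-global-reconstruction}) together with a direct double-inclusion check. With the paper's notation we set
\[
G := \operatorname{supp}\bigl(\sigma(F_*(A))\bigr), \qquad L := \bigcup_{S \in \mathrm{Str}(P)} \operatorname{supp}\bigl(\sigma(F_S(A_S))\bigr),
\]
so that by Definition~\ref{def:interaction-residue} we have $\Sigma^{\mathrm{res}}(F,A,P) = G \setminus L$.

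For $G \subseteq L$, we only need the definition of a set difference. Take $\lambda \in G$. If $\lambda \notin L$, then $\lambda \in G \setminus L = \Sigma^{\mathrm{res}}(F,A,P) = \emptyset$, which is a contradiction. Hence $\lambda \in L$. Equivalently, we substitute $\Sigma^{\mathrm{res}} = \emptyset$ into the boxed formula of Theorem~\ref{thm:local-to-global-reconstruction}, $G = L \cup \Sigma^{\mathrm{res}}(F,A,P)$, and obtain $G = L$ at once.

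The inclusion $L \subseteq G$ is the only step that needs care, and I expect it to be the main obstacle. Vanishing of $G \setminus L$ by itself gives only $G \subseteq L$, as Definition~\ref{def:rigidity-regime} points out. The reverse inclusion therefore has to come from the standing embedding hypothesis, not from the vanishing of the residue. I will invoke Assumption~\ref{ass:local-embedding}, which is built into Definition~\ref{def:interaction-residue}: the comparison maps $\iota_S$ induced by $P_S \hookrightarrow P$ are injective and identify each local support with a subset of the global support. Proposition~\ref{prop:local-embedding-condition} gives a sufficient condition for this, namely that $R$ is faithful and $P_S \hookrightarrow P$ is a monomorphism. This is the same inclusion $L \subseteq G$ used in the proofs of Theorems~\ref{thm:stratified-base-change} and~\ref{thm:local-to-global-reconstruction}, and it is stated explicitly as a hypothesis in Theorem~\ref{thm:universality-residue}. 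In the write-up I will state clearly that this is where the hypothesis enters, and that without it the conclusion weakens to $G \subseteq L$.

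Combining the two inclusions gives $G = L$, which is the displayed identity. The statement that the system is spectrally separable is then exactly the terminology of Definition~\ref{def:rigidity-regime}. The phrase ``no additional interface-generated contribution'' can be backed up by Theorem~\ref{thm:interface-localization}: each $\mathcal{L}_I(F,A,P) = \Sigma^{\mathrm{res}} \cap \operatorname{Supp}_I(\tau_I)$ is empty, so every localized defect vanishes.
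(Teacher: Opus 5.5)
Your proposal is correct and follows essentially the same route as the paper. The paper's proof simply substitutes $\Sigma^{\mathrm{res}} = \emptyset$ into the decomposition $G = L \cup \Sigma^{\mathrm{res}}$ of Theorem~\ref{thm:stratified-base-change}, which is the same formula as the reconstruction theorem you cite; your explicit observation that $L \subseteq G$ comes from the embedding hypothesis (and not from the vanishing of the residue) is a useful clarification, since the paper uses it only implicitly inside the proof of that theorem.
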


\begin{proof}
By the Stratified Base Change Decomposition Theorem (Theorem~\ref{thm:stratified-base-change}),
\[
\operatorname{supp}\bigl(\sigma(F_*(A))\bigr)
=
\bigcup_{S \in \mathrm{Str}(P)}
\operatorname{supp}\bigl(\sigma(F_S(A_S))\bigr)
\;\cup\;
\Sigma^{\mathrm{res}}(F, A, P).
\]

If $\Sigma^{\mathrm{res}}(F, A, P) = \emptyset$, then this decomposition reduces immediately to
\[
\operatorname{supp}\bigl(\sigma(F_*(A))\bigr)
=
\bigcup_{S \in \mathrm{Str}(P)}
\operatorname{supp}\bigl(\sigma(F_S(A_S))\bigr).
\]

Hence the global spectral support decomposes exactly into the local spectral sectors. No residual interface-localized spectral contribution remains.
\end{proof}

The rigidity regime plays a role analogous to flat geometry in differential geometry, trivial holonomy in connection theory, split extensions in homological algebra, or defect-free phases in mathematical physics. In each case, the global structure is completely determined by compatible local data, with no residual interaction obstruction.

In the present setting, the vanishing of the residue means that the global spectral support is reconstructed entirely from the local spectral sectors:
\[
\text{global spectral support}
=
\text{assembly of local spectral supports}.
\]

Thus, the rigidity regime corresponds to the absence of residual interface-generated spectral structure. Interface localization disappears at the level of spectral support, and no nonlocal interaction contribution survives beyond the local strata.

Conversely, nonvanishing residue signals the presence of genuinely interaction-generated spectral geometry:
\[
\Sigma^{\mathrm{res}} \neq \emptyset.
\]

This yields the fundamental dichotomy of this work:
\[
\boxed{
\Sigma^{\mathrm{res}} = \emptyset
\quad\Longleftrightarrow\quad
\text{support-level spectral rigidity}.
}
\]
\[
\boxed{
\Sigma^{\mathrm{res}} \neq \emptyset
\quad\Longleftrightarrow\quad
\text{support-level spectral defect geometry}.
}
\]

The rigidity regime therefore serves as the reference factorizable phase against which nontrivial operadic interaction geometry is measured.

\begin{proposition}[Characterization of vanishing residue]
\label{prop:vanishing-coupling}

The interaction residue satisfies
\[
\Sigma^{\mathrm{res}}(F, A, P) = \emptyset
\]
if and only if every admissible interface contributes no residual spectral mass. Equivalently,
\[
\mathcal{L}_I(F, A, P) = \emptyset \qquad \text{for all } I \in \mathcal{I}(P).
\]

In particular, if all coupling tensors vanish,
\[
\tau_I = 0 \qquad \text{for all } I \in \mathcal{I}(P),
\]
then, under the convention that a zero coupling tensor contributes empty defect support, we have
\[
\Sigma^{\mathrm{res}}(F, A, P) = \emptyset.
\]
\end{proposition}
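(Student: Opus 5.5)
The plan is to reduce everything to set-theoretic manipulations with the decomposition provided by the Interface Localization Theorem (Theorem~\ref{thm:interface-localization}). Accordingly, I will work under its standing hypothesis, the residual coverage condition
\[
\Sigma^{\mathrm{res}}(F,A,P)\subseteq\bigcup_{I\in\mathcal{I}(P)}\operatorname{Supp}_I(\tau_I),
\]
and I will make that assumption explicit at the start of the proof. Under it we have
\[
\Sigma^{\mathrm{res}}(F,A,P)=\bigcup_{I\in\mathcal{I}(P)}\mathcal{L}_I(F,A,P),
\qquad
\mathcal{L}_I(F,A,P)=\Sigma^{\mathrm{res}}(F,A,P)\cap\operatorname{Supp}_I(\tau_I).
\]

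For the equivalence, the forward direction is immediate. Each $\mathcal{L}_I$ is, by definition, a subset of $\Sigma^{\mathrm{res}}$, so if $\Sigma^{\mathrm{res}}=\emptyset$ then every $\mathcal{L}_I$ is empty; this direction does not even need coverage. For the converse, suppose $\mathcal{L}_I=\emptyset$ for every $I\in\mathcal{I}(P)$. Then the union on the right-hand side of the localization identity is empty, so $\Sigma^{\mathrm{res}}=\emptyset$. This is exactly where the coverage hypothesis enters, and I will note this in a remark. Without coverage, residual points lying outside every $\operatorname{Supp}_I(\tau_I)$ would be invisible to all the $\mathcal{L}_I$, and the converse would fail.

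For the ``in particular'' clause, assume $\tau_I=0$ for all $I$ and adopt the convention $\operatorname{Supp}_I(0)=\emptyset$, which is the same convention used in Theorem~\ref{thm:deformation-stability}. Then $\mathcal{L}_I=\Sigma^{\mathrm{res}}\cap\emptyset=\emptyset$ for every $I$, and the equivalence just proved gives $\Sigma^{\mathrm{res}}=\emptyset$. Alternatively, coverage yields directly $\Sigma^{\mathrm{res}}\subseteq\bigcup_I\emptyset=\emptyset$. I will also mention that this recovers Corollary~\ref{cor:vanishing-residue-interface}, but by a route that uses only the support convention rather than the informal decoupling argument.

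The main obstacle is not computational but one of hypotheses. The statement as written does not mention the coverage condition, yet the reverse implication is false without it. The step I expect to require care is therefore making clear that the characterization is understood \emph{under the hypotheses of Theorem~\ref{thm:interface-localization}}, together with the zero-support convention, and not for arbitrary coupling data. If one wanted to avoid assuming coverage, the only option I see is to verify it in concrete realizations, for example block-matrix examples where residual eigenvalues come from off-diagonal couplings. I would not attempt that in general here.
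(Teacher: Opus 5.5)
Your proposal is correct and follows essentially the same route as the paper. Both arguments invoke the Interface Localization Theorem under its hypotheses; the paper says ``under its stated assumptions'', which is exactly the residual coverage condition you make explicit. Both then read off the two directions and the zero-coupling case from $\Sigma^{\mathrm{res}}=\bigcup_{I}\mathcal{L}_I$, using the convention that a zero coupling tensor has empty defect support. Your extra observations are accurate sharpenings of the paper's argument rather than a different approach: the forward direction needs only $\mathcal{L}_I\subseteq\Sigma^{\mathrm{res}}$, and the converse genuinely fails without coverage.
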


\begin{proof}
By the Interface Localization Theorem (Theorem~\ref{thm:interface-localization}), under its stated assumptions,
\[
\Sigma^{\mathrm{res}}(F, A, P) \;\cong\; \coprod_{I \in \mathcal{I}(P)} \mathcal{L}_I(F, A, P),
\]
where $\coprod$ denotes the interface-labeled disjoint union of localized supports (i.e., spectral values from different interfaces are distinguished by their originating interface, even if they coincide as complex numbers).

If $\Sigma^{\mathrm{res}}(F, A, P) = \emptyset$, then the disjoint union over all interfaces is empty, hence each $\mathcal{L}_I(F, A, P) = \emptyset$. Conversely, if every $\mathcal{L}_I = \emptyset$, then $\Sigma^{\mathrm{res}} = \emptyset$.

If $\tau_I = 0$ for all $I$, then each $\mathcal{L}_I = \emptyset$ (under the convention that a zero coupling tensor contributes empty defect support), so $\Sigma^{\mathrm{res}} = \emptyset$. (The converse does not hold in general: nonzero coupling tensors may produce no residual spectral support if their contribution is absorbed into the local spectral union or is spectrally trivial at the support level.)
\end{proof}

\begin{corollary}[Strictly decoupled stratifications]
\label{cor:vanishing-transversal}
If the operadic stratification is \emph{strictly decoupled}, meaning that all inter-strata coupling tensors vanish ($\tau_I = 0$ for every interface $I \in \mathcal{I}(P)$), then the system lies in the rigidity regime:
\[
\Sigma^{\mathrm{res}}(F, A, P) = \emptyset.
\]
\end{corollary}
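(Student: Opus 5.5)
The corollary follows from Proposition~\ref{prop:vanishing-coupling}, with Corollary~\ref{cor:vanishing-residue-interface} as a cross-check. I will first note that ``strictly decoupled'' is by definition the hypothesis $\tau_I = 0$ for every $I \in \mathcal{I}(P)$. This is exactly the special case treated in the second part of Proposition~\ref{prop:vanishing-coupling}. I will also make explicit the same convention used there and in Theorem~\ref{thm:deformation-stability}: a zero coupling tensor contributes empty defect support, $\operatorname{Supp}_I(0) = \emptyset$. This replaces the ordinary spectrum $\sigma(0) = \{0\}$.

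The key steps are as follows. First, for each interface $I$ I will compute the localized defect from its definition in Theorem~\ref{thm:interface-localization}:
\[
\mathcal{L}_I(F,A,P) = \Sigma^{\mathrm{res}}(F,A,P) \cap \operatorname{Supp}_I(\tau_I) = \Sigma^{\mathrm{res}}(F,A,P) \cap \emptyset = \emptyset.
\]
Second, I will invoke the Interface Localization Theorem to write $\Sigma^{\mathrm{res}}(F,A,P) = \bigcup_{I} \mathcal{L}_I(F,A,P)$. Each term is empty, so the union is empty. Third, I will record the consequence via Proposition~\ref{prop:exact-factorization}: the global support equals the union of the local supports, so $(P,A)$ lies in the rigidity regime of Definition~\ref{def:rigidity-regime}.

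The one step that needs care is the second. The Interface Localization Theorem requires the residual coverage condition
\[
\Sigma^{\mathrm{res}} \subseteq \bigcup_I \operatorname{Supp}_I(\tau_I).
\]
When all supports are empty, this condition is equivalent to the conclusion itself, so it cannot be invoked without circularity. I would handle this in one of two ways. The first option is to state explicitly that the corollary is proved under the standing hypotheses of Proposition~\ref{prop:vanishing-coupling}, namely that localization holds. The second option is to bypass localization and argue directly, as in Corollary~\ref{cor:vanishing-residue-interface}. When every interface structure map vanishes, the realized operator is block-diagonal with respect to the strata. Its spectral support is therefore the union of the block spectra, so $G = L$ and $\Sigma^{\mathrm{res}} = G \setminus L = \emptyset$.

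I expect the direct block-diagonal argument to be the main obstacle in full operadic generality, because it relies on the realization functor $R$ turning a direct sum into a union. I would assume this property at the level of the concrete realizations used in the paper, such as matrix-block operads. In the abstract case I would fall back on the localization hypothesis.
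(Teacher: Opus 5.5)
Your main route is the same as the paper's. The paper proves this in one line by noting that strict decoupling means $\tau_I = 0$ for all $I$ and then invoking Proposition~\ref{prop:vanishing-coupling}, whose proof is exactly your localization argument combined with the zero-support convention. Your remark that the residual coverage hypothesis of Theorem~\ref{thm:interface-localization} reduces to $\Sigma^{\mathrm{res}}(F,A,P)=\emptyset$ when every $\operatorname{Supp}_I(\tau_I)$ is empty is correct, and it applies equally to the paper's proof, which is therefore conditional on that standing hypothesis in just the way you describe.
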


\begin{proof}
Strict decoupling implies $\tau_I = 0$ for every interface $I \in \mathcal{I}(P)$. Proposition~\ref{prop:vanishing-coupling} then yields $\Sigma^{\mathrm{res}} = \emptyset$.
\end{proof}

\begin{proposition}[Rigidity preserved under trivial deformations]
\label{prop:rigidity-deformation}
Let $\{A_t\}_{t \in [0,1]}$ be an admissible deformation of stratified $P$-algebras, and let $F: \mathcal{M} \to \mathcal{N}$ be a strong monoidal base change functor inducing $F_*$ on algebras. If the deformation is trivial on interactions for all $t$ (i.e., all inter-strata coupling tensors vanish for every $t$), then $\Sigma^{\mathrm{res}}(F, A_t, P) = \emptyset$ for all $t \in [0,1]$.
\end{proposition}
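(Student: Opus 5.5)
The plan is to reduce the statement pointwise in $t$ to results already proved, since the hypothesis is a condition imposed separately at every parameter value. Fix an arbitrary $t \in [0,1]$. By hypothesis the deformation is trivial on interactions, so $\tau_I(t) = 0$ for every admissible interface $I \in \mathcal{I}(P)$. Thus, for this fixed $t$, the stratified system $(P, A_t)$ with base change $F$ is strictly decoupled in the sense of Corollary~\ref{cor:vanishing-transversal}.

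The main route is to apply Corollary~\ref{cor:vanishing-transversal} (equivalently Proposition~\ref{prop:vanishing-coupling}) to $(P, A_t)$. This gives $\Sigma^{\mathrm{res}}(F, A_t, P) = \emptyset$. Because $t$ was arbitrary, the conclusion holds for all $t \in [0,1]$. As a consistency check, I would also note that this is exactly the ``vanishing coupling'' clause (2) of Theorem~\ref{thm:deformation-stability}. That clause obtains the same conclusion via the Interface Localization Theorem together with the convention $\operatorname{supp}_I(0) = \emptyset$. Either route can be cited, but the pointwise route needs no continuity or gap hypotheses, so I would present it as the primary argument and mention Theorem~\ref{thm:deformation-stability} only as a remark.

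The one step needing care, and the only real obstacle, is making explicit which conventions the argument relies on. Proposition~\ref{prop:vanishing-coupling} requires the hypotheses of Theorem~\ref{thm:interface-localization}, in particular the residual coverage condition, at each $t$. It also requires the defect-support convention that a zero coupling tensor contributes empty support, rather than $\sigma(0)=\{0\}$.

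With these in place the chain at each $t$ is as follows:
\[
\mathcal{L}_I(F,A_t,P)=\Sigma^{\mathrm{res}}(F,A_t,P)\cap\operatorname{Supp}_I(0)=\emptyset
\]
for each $I$, and then
\[
\Sigma^{\mathrm{res}}(F,A_t,P)=\bigcup_I \mathcal{L}_I(F,A_t,P)=\emptyset.
\]

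I would therefore state in the proof that \emph{admissible deformation} is understood to include the interface-localization hypotheses at every $t$, as in hypothesis (3) of Theorem~\ref{thm:deformation-stability}. Alternatively, one could argue directly as in Corollary~\ref{cor:vanishing-residue-interface}: with all couplings zero, every spectral value of $F_*(A_t)$ comes from some stratum, so $G_t = L_t$ and $G_t \setminus L_t = \emptyset$. I would mention this as a second justification.
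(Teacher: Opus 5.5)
Your proposal is correct and takes the same approach as the paper: it notes that $\tau_I(t)=0$ for every interface and every $t$, then applies Proposition~\ref{prop:vanishing-coupling} pointwise in $t$. You also make explicit the residual coverage condition and the convention $\operatorname{Supp}_I(0)=\emptyset$ at each $t$, which the paper's proof leaves implicit, and that is a worthwhile addition.
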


\begin{proof}
If the deformation is trivial on interactions for all $t$, then $\tau_I(t) = 0$ for every interface $I$ and every $t$. By Proposition~\ref{prop:vanishing-coupling}, this implies $\Sigma^{\mathrm{res}}(F, A_t, P) = \emptyset$ for all $t$. Thus the rigidity regime is stable under deformations that preserve the vanishing of inter-strata couplings.
\end{proof}

\begin{remark}[Non-rigidity and phase transitions]
\label{rem:non-rigidity}
The rigidity regime is not preserved under arbitrary deformations that \textit{introduce} interactions. If a deformation creates nonzero coupling tensors $\tau_I(t)$ for some $t > 0$, the system may exit the rigidity regime and enter a phase with nontrivial spectral defects. This may correspond to a phase transition from a spectrally separable phase to an interacting phase with interface-localized spectral contributions.

Conversely, if a system is initially in a nontrivial residue phase and couplings are continuously tuned to zero, the residue may vanish at the critical point, signaling a transition to the rigidity regime.
\end{remark}

\begin{corollary}[Spectral additivity in the rigidity regime]
\label{cor:additivity-vanishing}

In the rigidity regime, $\Sigma^{\mathrm{res}}(F, A, P) = \emptyset$, the global spectral support decomposes as the union of the local spectral supports:
\[
\operatorname{supp}\bigl(\sigma(F_*(A))\bigr)
=
\bigcup_{S \in \mathrm{Str}(P)}
\operatorname{supp}\bigl(\sigma(F_S(A_S))\bigr).
\]

If, in addition, the local spectral sectors are spectrally independent, the spectral functor admits compatible direct-sum decompositions, the relevant realization category is semisimple or split at the spectral level, and no hidden extension data remain between strata, then the global spectral object may admit a decomposition up to spectral equivalence as
\[
F_*(A) \;\simeq\; \bigoplus_{S \in \mathrm{Str}(P)} F_S(A_S).
\]
\end{corollary}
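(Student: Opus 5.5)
The statement has two parts, and I will treat them separately. The first is a purely support-level identity. It follows directly from Proposition~\ref{prop:exact-factorization}: the hypothesis $\Sigma^{\mathrm{res}}(F,A,P)=\emptyset$ makes the decomposition of Theorem~\ref{thm:stratified-base-change} collapse to $G = L$. No further work is needed there beyond citing that proposition. I would also recall from Definition~\ref{def:rigidity-regime} that the standing embedding $L\subseteq G$ (Assumption~\ref{ass:local-embedding}) gives the reverse inclusion.

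The second part is conditional, and the plan is to make its hypotheses do all the work. I would formalize them as follows:
\begin{itemize}
\item[(i)] \emph{Spectral independence}: the local supports $L_S$ are pairwise disjoint, or at least the associated spectral idempotents are mutually annihilating.
\item[(ii)] \emph{Compatible direct sums}: $F_*$ and the stratum functors $F_S$ preserve finite or countable coproducts, and the restriction maps $\rho_S$ from Proposition~\ref{prop:local-embedding-condition} assemble into a comparison morphism $\Phi: F_*(A)\to\bigoplus_S F_S(A_S)$.
\item[(iii)] \emph{Semisimple/split realization}: every short exact sequence in the realization category splits.
\item[(iv)] \emph{No hidden extension data}: the extension classes recording the interface operations in the relevant $\mathrm{Ext}^1$ vanish.
\end{itemize}
The key steps, in order, are these. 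First, construct $\Phi$ from the restriction maps and the sections $\ell_S$ of Proposition~\ref{prop:local-embedding-condition}. Second, use (i) to build orthogonal spectral projections $E_S$ onto the sectors $L_S$, for example via Riesz projections when the $L_S$ are separated. By the first part of the statement these projections sum to the identity on the global support. Third, use (iii)--(iv) to show that the filtration of $F_*(A)$ by these sectors splits, so that the off-diagonal (interface) blocks vanish up to spectral equivalence. Then $\Phi$ is an isomorphism in the spectral-equivalence sense.

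The main obstacle is the third step. Support equality alone does not kill off-diagonal coupling. For instance, an upper-triangular block matrix has spectrum equal to the union of the spectra of its diagonal blocks, yet it need not be similar to their direct sum; a Jordan coupling between strata with equal eigenvalues is the typical failure. This is exactly why the statement says ``may admit'' and lists extra hypotheses. I would first handle this in the finite-dimensional block-matrix model. There, disjoint local spectra make the Sylvester equation $X T_S - T_{S'} X = C$ uniquely solvable, and this conjugates away the interface block $C$, giving the splitting explicitly. In the general case I would then simply invoke (iii)--(iv) as the abstract replacement for that solvability.
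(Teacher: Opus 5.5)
Your first part matches the paper's proof: with $\Sigma^{\mathrm{res}}=\emptyset$, the decomposition of Theorem~\ref{thm:stratified-base-change} (equivalently Proposition~\ref{prop:exact-factorization}) reduces to $G=L$.

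For the second part, the paper gives no actual derivation. It only asserts that under the listed assumptions the support decomposition ``lifts'', so the lift is in effect one of the hypotheses. You instead try to derive the splitting from your formalized conditions (i)--(iv), and your third step does not go through. The Sylvester/$\mathrm{Ext}^1$ mechanism removes the off-diagonal block of a block-\emph{triangular} operator, i.e.\ it splits an extension. Interface operations here generally couple strata in both directions; the paper's own block example has both $\tau$ and $\tau^*$. In that case there is no strata-adapted invariant filtration to split. Indeed, once your Riesz idempotents $E_S$ exist, $F_*(A)$ already decomposes as $\bigoplus_S \operatorname{Ran}E_S$ with no off-diagonal part. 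What remains is to identify each piece with $F_S(A_S)$, and equality of supports cannot see the data this requires: multiplicities and Jordan structure. (If ``spectral equivalence'' only meant equality of supports, the second part would say nothing beyond the first.)

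Concretely, let $H_1=\operatorname{diag}(0,0,3)$ and $H_2=\operatorname{diag}(1,1,-2)$. Let $\tau$ be the $3\times 3$ matrix whose only nonzero entry is $\tau_{11}=\sqrt6$, and set $H=\begin{pmatrix} H_1 & \tau\\ \tau^* & H_2\end{pmatrix}$.
\begin{itemize}
\item The coupled block $\begin{pmatrix} 0 & \sqrt6\\ \sqrt6 & 1\end{pmatrix}$ has eigenvalues $3$ and $-2$, so $\sigma(H)=\{-2,0,1,3\}=\sigma(H_1)\cup\sigma(H_2)$.
\item Hence the residue is empty and the local spectra are disjoint.
\item $H$ is self-adjoint, so every splitting or semisimplicity condition holds, and $\mathrm{Ext}^1$ between the local sectors vanishes.
\item Yet $0,1,3,-2$ have multiplicities $1,1,2,2$ in $H$ versus $2,2,1,1$ in $H_1\oplus H_2$, so $H\not\simeq H_1\oplus H_2$.
\end{itemize}
A non-normal version takes $H_1=0$, $H_2=I$ on $\mathbb{C}^2$ and $H=\begin{pmatrix} 0 & N\\ I & I\end{pmatrix}$, with $N$ the nilpotent $2\times 2$ Jordan block. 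Then $\det(zI-H)=z^2(z-1)^2$, but $H\simeq J_2(0)\oplus J_2(1)$, which is not similar to $0\oplus I$.

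So (i)--(iv) do not imply the conclusion. To close the argument you have two options. You can assume outright that each piece $\operatorname{Ran}E_S$ is equivalent to $F_S(A_S)$, or that the interface coupling is removable by similarity; this is what the paper's hedged proof tacitly does. Or you can restrict to one-sided interface coupling, where your Sylvester argument is correct and the empty residue is automatic given disjoint local spectra.

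Separately, your $\Phi$ is not well typed. The maps $\rho_S,\ell_S$ of Proposition~\ref{prop:local-embedding-condition} go between the spectral objects $\sigma_P(A)$ and $\sigma_{P_S}(A_S)$, not between $F_*(A)$ and $F_S(A_S)$.
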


\begin{proof}
When $\Sigma^{\mathrm{res}} = \emptyset$, the Stratified Base Change Decomposition Theorem (Theorem~\ref{thm:stratified-base-change}) gives the decomposition of spectral supports. Under the additional assumptions that the spectral functor respects direct-sum decompositions, the realization category is semisimple or split at the spectral level, and no hidden extension data remain between strata, the decomposition of spectral supports lifts to a decomposition of the associated spectral objects up to spectral equivalence.
\end{proof}

\begin{remark}[Relation to semisimplicity (heuristic)]
\label{rem:vanishing-semisimple}
The rigidity regime shares formal features with semisimplicity in representation theory and module theory. A semisimple module is a direct sum of simple submodules with vanishing extension data. Analogously, a spectrally separable system has a global spectral support that is a union of local spectral supports with no interaction-generated spectral components. In both cases, the vanishing of certain obstruction data (Ext groups in homological algebra; $\Sigma^{\mathrm{res}}$ in this work) characterizes a factorizable or semisimple-type regime. This heuristic suggests deeper connections between operadic spectral theory and homological algebra, which will be explored in future work.
\end{remark}

\paragraph{Key takeaway: rigidity regime.}

The rigidity regime $\Sigma^{\mathrm{res}} = \emptyset$ corresponds to the maximally factorizable phase of the theory. In this regime, the global spectral support decomposes entirely into the union of the local spectral sectors, with no residual interface-generated contribution. Equivalently, no nontrivial residual support-level interaction geometry is detected by the spectral functor.

Under admissible interaction-preserving deformations, the rigidity regime remains stable as long as no new inter-strata coupling data are introduced. Conversely, the appearance of nontrivial interface interactions may generate residual spectral contributions and drive the system into an interacting defect phase.

Thus, the rigidity regime serves as the canonical reference phase against which all nontrivial spectral defect geometry is measured.

\section{Geometric Interpretations and Connections}
\label{sec:geometric-interpretations}

The interaction-residue formalism developed in the previous sections
is motivated primarily by spectral localization phenomena arising from
operadic interactions.
The present section records several heuristic geometric viewpoints
that may help interpret the role of localized spectral defects.

These perspectives are intended only as conceptual interpretations of
the formalism developed earlier and are not required for the structural
results proved in this paper.

\subsection{Curvature Analogy}
\label{subsec:curvature-analogy}

One useful heuristic interpretation is the analogy between interaction
residues and curvature-type obstructions in geometry.

In differential geometry, curvature measures the obstruction to local
flatness or exact transport compatibility.
In the present framework, the interaction residue
\[
\Sigma^{\mathrm{res}}
\]
measures the obstruction to exact spectral factorization generated by
interacting operadic sectors.

Schematically,
\[
\text{flat geometry}
\quad\Longleftrightarrow\quad
\Sigma^{\mathrm{res}}=0,
\]
while nontrivial interaction residues correspond to the emergence of
localized spectral defects.

This analogy is particularly suggestive because:
\begin{itemize}
    \item residues localize near interfaces,
    \item residues vanish in rigidity regimes,
    \item residues persist under admissible deformation,
    \item residues encode global interaction structure.
\end{itemize}

The analogy is heuristic and intended only as a geometric viewpoint
for understanding localized spectral phenomena.

\subsection{Comparison with Classical Geometric Structures}
\label{subsec:comparison-classical}

The following table summarizes several heuristic correspondences between
classical geometric structures and the present spectral-defect framework.

\begin{center}
\renewcommand{\arraystretch}{1.2}
\begin{tabular}{|c|c|}
\hline
\textbf{Classical Geometry} &
\textbf{Spectral Defect Geometry} \\
\hline

Local geometric sector &
Local spectral sector \\
\hline

Curvature obstruction &
Interaction residue $\Sigma^{\mathrm{res}}$ \\
\hline

Flatness &
Rigidity regime $\Sigma^{\mathrm{res}}=0$ \\
\hline

Localized singularity &
Localized interface defect $\mathcal{L}_I$ \\
\hline

Deformation stability &
Residue persistence under admissible perturbation \\
\hline

Geometric coupling &
Operadic interaction coupling $\tau_I$ \\
\hline

\end{tabular}
\end{center}

These correspondences are intended only as heuristic organizational
principles rather than literal equivalences of mathematical structures.

\subsection{Summary and Future Directions}
\label{subsec:summary-future-directions}

The main contribution of the present work is the introduction of a
localized spectral-defect framework for interacting operadic systems.

The theory suggests several possible future directions, including:
\begin{itemize}
    \item refined localization theory for interaction residues,
    \item deformation theory of spectral defects,
    \item operator-theoretic classification of nilpotent defects,
    \item transform-sensitive spectral localization,
    \item interaction-induced spectral instability.
\end{itemize}

A systematic geometric realization of these heuristic perspectives
remains an open problem for future work.

\section{Examples}
\label{sec:examples}

\subsection{Point Defect: Two Strata Meeting at a Point}
\label{subsec:example-point-defect}

We now illustrate the interface localization theory through the simplest nontrivial geometric configuration: two operadic strata interacting through a single isolated interface point.

This example demonstrates explicitly how localized interaction produces a nontrivial residue contribution $\Sigma^{\mathrm{res}}$, and how the resulting spectral defect may be interpreted as a point singularity in geometric defect theory.

\subsubsection{Setup}

Let
\[
P = P_1 \cup P_2 \cup I_p
\]
be a stratified operad consisting of two local strata $P_1$, $P_2$ (corresponding to two independent operadic sectors), together with an interface $I_p$ supported at a single interaction point $p$.

Let
\[
A = A_1 \oplus A_2
\]
be a stratified $P$-algebra, where $A_j$ is the local algebra associated with the stratum $P_j$.

Assume the following: each local stratum possesses purely discrete local spectrum; the uncoupled local spectra are
\[
\sigma_{\mathrm{loc}}^1 = \{\lambda_n^{(1)}\}, \qquad
\sigma_{\mathrm{loc}}^2 = \{\lambda_n^{(2)}\};
\]
and the interface interaction is generated by a coupling tensor
\[
\tau_p : A_1 \otimes A_2 \longrightarrow A_{\mathrm{int}},
\]
where $A_{\mathrm{int}}$ denotes the algebra supported on the interface (or, in finite-dimensional realizations, one may take $\tau_p$ as an endomorphism of $A_1 \otimes A_2$).

When the interface coupling is turned off ($\tau_p = 0$), the global spectrum is simply the union of the two local spectra:
\[
\sigma_{\text{global}}^{\text{decoupled}} = \sigma_{\mathrm{loc}}^1 \cup \sigma_{\mathrm{loc}}^2.
\]

\subsubsection{Application of the Interface Localization Theorem}

Under the natural embedding of local spectra into the global spectrum (Assumption~\ref{ass:local-embedding}), the global operadic spectrum admits the decomposition
\[
\sigma_P(A) = \sigma_{\mathrm{loc}}^1 \cup \sigma_{\mathrm{loc}}^2 \cup \Sigma^{\mathrm{res}}.
\]

The interaction residue $\Sigma^{\mathrm{res}}$ contains precisely the spectral contribution generated by the point interaction $\tau_p$.

By the Interface Localization Theorem (Theorem~\ref{thm:interface-localization}), since the system contains only a single interface, we have
\[
\Sigma^{\mathrm{res}} = \mathcal{L}_{I_p}.
\]

Thus, the entire residue localizes at the interaction point $p$. Geometrically, this localized defect may be interpreted as a point singularity:
\[
\text{point interface} \quad\Longrightarrow\quad \text{localized spectral defect}.
\]

More precisely, under the Interface Localization Theorem, the localized defect $\mathcal{L}_{I_p}$ is the subset of $\Sigma^{\mathrm{res}}$ contributed by the interface $I_p$, which is typically derived from the spectral data of the operator $T_p = \tau_p^\dagger \tau_p$ (or from $\tau_p$ itself in finite-dimensional realizations).

\subsubsection{Interface-Generated Spectral Contribution}

A particularly important phenomenon occurs when the interface coupling creates new spectral values absent from the local strata individually.

For example, suppose $\lambda_* \notin \sigma_{\mathrm{loc}}^1 \cup \sigma_{\mathrm{loc}}^2$ but $\lambda_* \in \sigma_P(A)$. Then $\lambda_* \in \Sigma^{\mathrm{res}}$, and the eigenvalue $\lambda_*$ is an interface-generated spectral contribution.

In this case,
\[
\mathcal{L}_{I_p} = \{\lambda_*\} \quad \text{or more generally} \quad \{\gamma_1, \gamma_2, \ldots, \gamma_k\},
\]
where each $\gamma_j$ arises from the spectral effect of the interface interaction generated by $\tau_p$.

\subsubsection{Numerical Illustration}

To make the example concrete while remaining compatible with the abstract framework, suppose the interface interaction admits an effective finite-dimensional representation whose associated interface operator has spectral values $\gamma_1 = 2.5$ and $\gamma_2 = 3.5$.

Take the local spectra to be
\[
\sigma_{\mathrm{loc}}^1 = \{1.0, 2.0, 3.0\}, \qquad
\sigma_{\mathrm{loc}}^2 = \{4.0, 5.0\}.
\]

Then the global spectrum is
\[
\sigma_P(A) = \{1.0, 2.0, 3.0, 4.0, 5.0, 2.5, 3.5\}.
\]

Notice that the values $2.5$ and $3.5$ are new — they do not appear in either local spectrum. If the interface coupling were turned off ($\tau_p = 0$), these contributions would disappear. The interface-generated values may lie between local eigenvalues ($2.5$ lies between $2.0$ and $3.0$; $3.5$ lies between $3.0$ and $4.0$).

\medskip
\noindent\textit{Disclaimer:} The numerical values are illustrative and are not derived from a canonical operator model. They serve only to demonstrate the qualitative behavior predicted by the framework.

\subsubsection{Classical Analogies}

This phenomenon is analogous to bound states generated by point impurities in quantum mechanics, defect-localized eigenmodes in wave physics, delta-interaction spectral singularities in Schrödinger operators, or localized topological defect states in condensed matter systems.

\subsubsection{Vanishing Criterion and Rigidity}

The example also illustrates the rigidity principle developed in Section~\ref{sec:rigidity-vanishing-regime}.

If $\tau_p = 0$, then by Proposition~\ref{prop:vanishing-coupling} we have $\Sigma^{\mathrm{res}} = \emptyset$, and therefore
\[
\sigma_P(A) = \sigma_{\mathrm{loc}}^1 \cup \sigma_{\mathrm{loc}}^2.
\]

Hence the point defect disappears completely when the interface interaction vanishes. The converse direction ($\tau_p \neq 0 \Rightarrow \Sigma^{\mathrm{res}} \neq \emptyset$) does not hold in general: a nonzero coupling tensor could produce spectral values that coincide with existing local eigenvalues and thus not contribute new residue. Nevertheless, sufficiently nontrivial interface couplings typically generate nonzero point defects under generic conditions.

\subsubsection{Geometric Interpretation}

From a geometric perspective, the point defect may be viewed heuristically
as analogous to curvature concentration at a point.

\subsubsection{Deformation Stability}

Consider a continuous deformation of the coupling tensor:
\[
\tau_p(t) = t \cdot \tau_p^{(0)}, \quad t \in [0,1],
\]
where $\tau_p^{(0)}$ is a fixed non-zero coupling and $\tau_p(0) = 0$ (the decoupled limit).

By Theorem~\ref{thm:deformation-stability}, at $t = 0$ we have $\Sigma^{\mathrm{res}}(0) = \emptyset$ (rigidity regime); for $t > 0$ sufficiently small, $\Sigma^{\mathrm{res}}(t) = \{\gamma_1(t), \gamma_2(t), \ldots\}$ varies continuously, where $\gamma_j(t)$ depend continuously on $t$; and the values $\gamma_j(t)$ may approach $0$ or merge into the local spectrum as $t \to 0^+$, depending on the specific coupling.

This illustrates a transition from the rigidity regime ($t = 0$) to the defect regime ($t > 0$), as discussed in Definition~\ref{def:rigidity-regime}.

Summary of key features is given by Table~\ref{tab:point-defect-summary}.

\begin{table}[htbp]
\centering
\caption{Point defect summary}
\label{tab:point-defect-summary}
\begin{tabular}{|l|l|}
\hline
\textbf{Feature} & \textbf{Value for Point Defect} \\
\hline
Interface dimension & $\dim I = 0$ \\
\hline
Spectral signature & Isolated eigenvalues \\
\hline
Localization support & Single point $p$ \\
\hline
Local spectrum & $\sigma_{\mathrm{loc}}^1 \cup \sigma_{\mathrm{loc}}^2$ (discrete) \\
\hline
Residue & $\Sigma^{\mathrm{res}} = \mathcal{L}_{I_p} = \{\gamma_1, \ldots, \gamma_k\}$ \\
\hline
Vanishing condition & $\tau_p = 0 \Rightarrow \Sigma^{\mathrm{res}} = \emptyset$ \\
\hline
Deformation behavior & Continuous movement of eigenvalues; may approach $0$ or merge into local spectrum as $t \to 0^+$ \\
\hline
Analogy in curvature & Curvature concentration at a point (conical singularity) — heuristic \\
\hline
Analogy in TQFT & Zero-dimensional defect object \\
\hline
\end{tabular}
\end{table}

\subsubsection{Conclusion}

This example demonstrates concretely the central philosophy of the present framework:
\[
\boxed{\text{Localized operadic interactions generate localized spectral defects}.}
\]

In particular, local strata define background spectral geometry, the interface point acts as a singular interaction locus, and the residue records the resulting defect spectrum.

Thus, even the simplest point interface already exhibits the fundamental geometric mechanism underlying spectral defect geometry developed in the present work: the interaction residue $\Sigma^{\mathrm{res}}$ is generated purely by interface coupling; the residue localizes at the interface (the point where strata meet); the global spectrum is described by local spectra together with the residue contribution; and the spectral signature (isolated eigenvalues) matches the classification for point defects (Theorem~\ref{thm:defect-classification}).

\subsection{Line Defect: One-Dimensional Coupling Interface}
\label{subsec:example-line-defect}

We next consider a higher-dimensional interaction geometry in which two spectral sectors interact along a one-dimensional interface.

Unlike the point-defect case, where the interaction residue typically produces isolated spectral contributions, a one-dimensional interface may generate an extended spectral structure such as a band-like spectral branch.

This example illustrates how interface dimension may influence the geometry of the residue:
\[
\text{higher-dimensional interface} \quad\Longrightarrow\quad \text{typically richer spectral defect structures}.
\]

\subsubsection{Setup}

Let
\[
P = P_1 \cup P_2 \cup I_L
\]
be a stratified operad consisting of two local operadic sectors $P_1$, $P_2$, together with a one-dimensional interaction interface $I_L$, geometrically modeled by a line or curve (e.g., an interval $[0, L]$ or a circle $S^1$).

Let
\[
A = A_1 \oplus A_2
\]
be the corresponding stratified $P$-algebra.

Assume the following: each local stratum possesses discrete local spectral sectors (for simplicity); the interface interaction varies continuously along the line interface; and the interaction is generated by a family of coupling tensors
\[
\tau_s : A_1 \otimes A_2 \longrightarrow A_{\mathrm{int}}, \qquad s \in I_L,
\]
parameterized by the coordinate $s$ along the line, where $A_{\mathrm{int}}$ denotes the algebra supported on the interface (or, in finite-dimensional realizations, one may take $\tau_s$ as an endomorphism of $A_1 \otimes A_2$).

When the interface coupling is turned off ($\tau_s = 0$ for all $s \in I_L$), the global spectrum is simply the union of the two local spectra:
\[
\sigma_{\text{global}}^{\text{decoupled}} = \sigma_{\mathrm{loc}}^1 \cup \sigma_{\mathrm{loc}}^2.
\]

\subsubsection{Application of the Interface Localization Theorem}

Under the natural embedding of local spectra into the global spectrum (Assumption~\ref{ass:local-embedding}), the global operadic spectrum admits the decomposition
\[
\sigma_P(A) = \sigma_{\mathrm{loc}}^1 \cup \sigma_{\mathrm{loc}}^2 \cup \Sigma^{\mathrm{res}}.
\]

By the Interface Localization Theorem (Theorem~\ref{thm:interface-localization}), since the interaction is supported entirely on the line interface, we have
\[
\Sigma^{\mathrm{res}} = \mathcal{L}_{I_L}.
\]

In contrast with the point-defect case, the continuous family of interface couplings $\{\tau_s\}_{s \in I_L}$ may generate a continuously varying family of spectral contributions associated with the interface interaction. Consequently, $\mathcal{L}_{I_L}$ may contain band-like spectral structures rather than isolated eigenvalues.

Schematically, if the interface coupling yields a continuously varying family of effective interface spectral values $\lambda_j(s)$ (for $j = 1, \ldots, m$), then
\[
\mathcal{L}_{I_L} = \bigcup_{j=1}^m \bigcup_{s \in I_L} \lambda_j(s),
\]
where the union over $s$ may produce band-like structures (intervals or more general continua) depending on the regularity of $\lambda_j(s)$. Thus:
\[
\text{line interface} \quad\Longrightarrow\quad \text{band-like spectral defect structure (in effective models)}.
\]

Geometrically, the interface may be interpreted as a spectral waveguide transmitting interaction modes along the defect direction.

\subsubsection{Numerical Illustration}

To make the example concrete while remaining compatible with the abstract framework, suppose the interface interaction admits an effective scalar spectral model whose associated interface spectral values are given by
\[
\lambda(s) = 2 + \sin(2\pi s), \qquad s \in I_L = [0,1].
\]

Take the local spectra to be
\[
\sigma_{\mathrm{loc}}^1 = \{0, 3\}, \qquad
\sigma_{\mathrm{loc}}^2 = \{5, 8\}.
\]

Then the interface spectral contribution is the range of $\lambda(s)$:
\[
\mathcal{L}_{I_L} = [1, 3] \quad (\text{since } 2 + \sin(2\pi s) \in [1, 3]).
\]

The global spectrum is therefore
\[
\sigma_P(A) = \{0, 3\} \cup \{5, 8\} \cup [1, 3] = \{0\} \cup [1, 3] \cup \{3, 5, 8\}.
\]

Notice that the band-like region $[1, 3]$ is new — it does not appear in either local spectrum. The eigenvalue $3$ appears both as a local eigenvalue (from $\sigma_{\mathrm{loc}}^1$) and as the endpoint of the interface band. Strictly speaking, since $3$ already belongs to $\sigma_{\mathrm{loc}}^1$, it is not part of the residue $\Sigma^{\mathrm{res}}$ (which is defined as $G \setminus L$). The interface contribution is therefore the half-open interval $[1,3)$ or $(1,3)$ depending on endpoints; for simplicity we refer to $[1,3]$ as the interface-generated spectral structure, understanding that the endpoint $3$ is already covered by the local spectrum. If the interface coupling were turned off, the band-like region disappears, and we recover only $\{0, 3, 5, 8\}$.

\medskip
\noindent\textit{Disclaimer:} The numerical values and the scalar spectral model are illustrative and are not derived from a canonical operator realization. They serve only to demonstrate the qualitative behavior predicted by the framework.

\subsubsection{Spectral Gap Phenomenon}

A particularly important situation occurs when the local strata individually possess spectral gaps:
\[
\sigma_{\mathrm{loc}}^1 \cap (a, b) = \emptyset, \qquad
\sigma_{\mathrm{loc}}^2 \cap (a, b) = \emptyset,
\]
but the interaction residue satisfies
\[
\Sigma^{\mathrm{res}} \cap (a, b) \neq \emptyset.
\]

In this case, the line interface may generate a defect band inside the spectral gap of the bulk strata. Thus, the interaction geometry can create entirely new global spectral phases not visible locally.

This illustrates one of the key mechanisms of the present framework:
\[
\boxed{\text{Interface geometry generates new spectral phases through localized interaction defects}.}
\]

\subsubsection{Band Structure and Topology}

For a nontrivial line defect (e.g., $I_L = S^1$ a circle), the parameter $s$ is periodic. Under suitable continuity assumptions, one may obtain continuous spectral branches tracing closed curves in the spectral plane. These branches may have nontrivial topology: the winding number of a spectral branch around a point in the spectral plane would be a topological invariant; crossing of bands (where $\lambda_j(s) = \lambda_k(s)$ for some $s$) would correspond to degeneracies, which may be protected by symmetry. Possible classifications of line defects may involve topological invariants analogous to winding or Chern-type indices.

For example, consider a $2 \times 2$ coupling matrix on $I_L = S^1$:
\[
\tau_s = \begin{pmatrix} \cos(2\pi s) & \sin(2\pi s) \\ \sin(2\pi s) & -\cos(2\pi s) \end{pmatrix},
\]
which has eigenvalues $\pm 1$ (constant) but the eigenvectors wind. While the eigenvalue set $\mathcal{L}_{I_L}$ remains $\{-1, 1\}$ (isolated points, not a band), the eigenvector winding suggests that a more refined invariant (e.g., a spectral localizer or a K-theoretic classification) could detect topology even when the eigenvalue support is trivial. Such refinements lie beyond the present support-level theory.

\subsubsection{Classical Analogies}

The line defect is analogous to several classical phenomena: edge states in topological materials (continuous families of interface-localized spectral contributions at the boundary between two topological phases); guided modes along waveguides (continuous families of spectral contributions supported on a one-dimensional interface, e.g., a fiber or channel); quantum-wire spectral bands (a one-dimensional system coupled to two-dimensional leads produces continuous transmission bands); propagating interface modes in defect field theories (modes that propagate along topological defects); and Sturm–Liouville problems with interface conditions (parameter-dependent boundary conditions may produce continuous spectral bands).

\subsubsection{Dimensional Hierarchy}

The dimensionality of the interface may influence the dimensionality of the resulting residue structure:
\[
\begin{aligned}
\text{point interface} &\Longrightarrow \text{isolated eigenvalue},\\
\text{line interface} &\Longrightarrow \text{band-like spectral structure (in effective models)},\\
\text{surface interface} &\Longrightarrow \text{potentially higher-dimensional spectral distributions},\\
\text{singular interface} &\Longrightarrow \text{potentially fractal or multifractal spectral structures}.
\end{aligned}
\]

More generally, higher-dimensional interfaces may generate increasingly rich spectral geometries, including higher-dimensional spectral densities, singular continuous spectra, or fractal interaction spectra. The line-defect example therefore suggests that the residue geometry developed in the present work may be inherently dimension-sensitive.

\subsubsection{Vanishing Criterion and Rigidity}

If $\tau_s = 0$ for all $s \in I_L$, then by Proposition~\ref{prop:vanishing-coupling} we have $\Sigma^{\mathrm{res}} = \emptyset$, and therefore
\[
\sigma_P(A) = \sigma_{\mathrm{loc}}^1 \cup \sigma_{\mathrm{loc}}^2.
\]

Thus, the line defect disappears completely when the interface interaction vanishes. If $\tau_s$ is non-zero but its effective spectral contribution is discrete (e.g., a constant operator independent of $s$), the residue $\mathcal{L}_{I_L}$ would consist of isolated eigenvalues — but a line defect ($\dim I = 1$) with continuously varying interface interactions may naturally produce band-like spectral structures in suitable effective models.

\subsubsection{Deformation Stability}

Consider a continuous deformation of the coupling tensor family:
\[
\tau_s(t) = t \cdot \tau_s^{(0)}, \quad t \in [0,1],
\]
where $\tau_s^{(0)}$ is a fixed non-zero coupling and $\tau_s(0) = 0$ for all $s$.

By Theorem~\ref{thm:deformation-stability}, at $t = 0$ we have $\Sigma^{\mathrm{res}}(0) = \emptyset$ (rigidity regime); for $t > 0$, the interface spectral structure varies continuously with $t$; and as $t \to 0^+$, the structure may collapse toward a degenerate limit (e.g., contracting to a point or merging into the local spectrum), illustrating a continuous transition from a regime with no interface states to a regime with interface-localized spectral structure.

\subsubsection{Geometric Interpretation}

From a geometric perspective, the point defect may be viewed heuristically
as analogous to curvature concentration at a point.

Summary of key features is given by Table~\ref{tab:line-defect-summary}.

\begin{table}[htbp]
\centering
\caption{Line defect summary}
\label{tab:line-defect-summary}
\begin{tabular}{|l|l|}
\hline
\textbf{Feature} & \textbf{Value for Line Defect} \\
\hline
Interface dimension & $\dim I = 1$ \\
\hline
Spectral signature & Band-like spectral structure (in effective models) \\
\hline
Localization support & One-dimensional curve $\gamma$ \\
\hline
Local spectrum & $\sigma_{\mathrm{loc}}^1 \cup \sigma_{\mathrm{loc}}^2$ (discrete, for simplicity) \\
\hline
Residue & $\Sigma^{\mathrm{res}} = \mathcal{L}_{I_L}$ \\
\hline
Vanishing condition & $\tau_s = 0 \;\forall s \Rightarrow \Sigma^{\mathrm{res}} = \emptyset$ \\
\hline
Deformation behavior & Continuous variation; may collapse to degenerate limit as $t \to 0^+$ \\
\hline
Possible invariants & Topological invariants analogous to winding or Chern-type indices (speculative) \\
\hline
Spectral gap phenomenon & Defect band may appear inside bulk gap \\
\hline
Analogy in curvature & Curvature concentration on a curve (cosmic string) — heuristic \\
\hline
Analogy in TQFT & One-dimensional topological defect object \\
\hline
\end{tabular}
\end{table}

\subsubsection{Conclusion}

The line-defect example suggests that the residue geometry developed in the present work may be inherently dimension-sensitive:
\[
\boxed{\text{Interface geometry generates new spectral phases through localized interaction defects}.}
\]

From the viewpoint of defect geometry, local strata define bulk spectral phases, the line interface acts as an interaction channel, and the residue records the resulting propagating spectral defect modes.

Thus, the present framework naturally suggests interface-localized spectral propagation phenomena analogous to those appearing in microlocal analysis, topological defect theory, and condensed-matter spectral geometry. The interaction residue is not merely an algebraic correction term: its structure may reflect the geometry and dimensionality of the underlying interfaces.

\subsection{Surface Defect: Two-Dimensional Interface}
\label{subsec:example-surface-defect}

We next consider a higher-dimensional interaction geometry in which spectral sectors interact along a two-dimensional interface.

Compared with point and line defects, a surface interface may generate substantially richer residue geometry. Instead of isolated eigenvalues or one-dimensional band-like structures, the interaction residue may now produce families of interface spectral modes parameterized continuously over a two-dimensional interaction manifold.

This example illustrates one of the central principles of the present framework:
\[
\text{interface dimension} \quad\Longrightarrow\quad \text{typically richer spectral defect structures}.
\]

\subsubsection{Setup}

Let
\[
P = P_1 \cup P_2 \cup I_\Sigma
\]
be a stratified operad consisting of two local operadic sectors $P_1$, $P_2$, together with a two-dimensional interaction interface $I_\Sigma$, geometrically modeled by a surface (e.g., a rectangle $[0, L_1] \times [0, L_2]$, a sphere $S^2$, or a torus $T^2$).

Let
\[
A = A_1 \oplus A_2
\]
be the corresponding stratified $P$-algebra.

Assume the interface interaction varies continuously across the surface with associated coupling tensors:
\[
\tau_{(x,y)} : A_1 \otimes A_2 \longrightarrow A_{\mathrm{int}}, \qquad (x,y) \in I_\Sigma,
\]
where $A_{\mathrm{int}}$ denotes the algebra supported on the interface (or, in finite-dimensional realizations, one may take $\tau_{(x,y)}$ as an endomorphism of $A_1 \otimes A_2$).

When the interface coupling is turned off ($\tau_{(x,y)} = 0$ for all $(x,y) \in I_\Sigma$), the global spectrum is simply the union of the two local spectra:
\[
\sigma_{\text{global}}^{\text{decoupled}} = \sigma_{\mathrm{loc}}^1 \cup \sigma_{\mathrm{loc}}^2.
\]

\subsubsection{Application of the Interface Localization Theorem}

Under the natural embedding of local spectra into the global spectrum (Assumption~\ref{ass:local-embedding}), the global operadic spectrum admits the decomposition
\[
\sigma_P(A) = \sigma_{\mathrm{loc}}^1 \cup \sigma_{\mathrm{loc}}^2 \cup \Sigma^{\mathrm{res}}.
\]

By the Interface Localization Theorem (Theorem~\ref{thm:interface-localization}), since the interaction is supported entirely on the surface interface, we have
\[
\Sigma^{\mathrm{res}} = \mathcal{L}_{I_\Sigma}.
\]

Because the interaction now varies over a two-dimensional parameter space, the resulting residue may support continuously parameterized spectral structures over this manifold. Schematically, if the interface coupling yields a continuously varying family of effective interface spectral values $\lambda_j(x,y)$ (for $j = 1, \ldots, m$), then
\[
\mathcal{L}_{I_\Sigma} = \bigcup_{j=1}^m \bigcup_{(x,y) \in I_\Sigma} \lambda_j(x,y),
\]
where the union over $(x,y)$ may produce higher-dimensional spectral regions (e.g., two-dimensional continua) depending on the regularity of $\lambda_j(x,y)$. Consequently:
\[
\text{surface interface} \quad\Longrightarrow\quad \text{surface-localized spectral regime (in effective models)}.
\]

Unlike the line-defect case, where spectral defects propagate along one-dimensional channels, the surface defect may support an entire continuum of interacting interface modes parameterized by the two-dimensional interface geometry.

\subsubsection{Numerical Illustration}

To make the example concrete while remaining compatible with the abstract framework, suppose the interface interaction admits an effective scalar spectral model whose associated interface spectral values are given by
\[
\lambda(x,y) = 1 + 2x + 3y, \qquad (x,y) \in I_\Sigma = [0,1] \times [0,1].
\]

Take the local spectra to be
\[
\sigma_{\mathrm{loc}}^1 = \{0\}, \qquad
\sigma_{\mathrm{loc}}^2 = \{5\}.
\]

Then the interface spectral contribution is the range of $\lambda(x,y)$:
\[
\mathcal{L}_{I_\Sigma} = [1, 1+2+3] = [1, 6].
\]

The global spectrum is therefore
\[
\sigma_P(A) = \{0\} \cup \{5\} \cup [1, 6] = \{0\} \cup [1, 6].
\]

Notice that the region $[1, 6]$ is new — it does not appear in either local spectrum ($\{0\}$ and $\{5\}$). The local eigenvalue $5$ lies inside the interface band $[1, 6]$. Strictly speaking, since $5$ already belongs to $\sigma_{\mathrm{loc}}^2$, it is not part of the residue $\Sigma^{\mathrm{res}}$ (which is defined as $G \setminus L$). The interface contribution is therefore $[1,6] \setminus \{5\}$ (or with appropriate treatment of endpoints). For simplicity we refer to $[1,6]$ as the interface-generated spectral structure, understanding that points already covered by local spectra are not counted as residue.

\medskip
\noindent\textit{Disclaimer:} The numerical values and the scalar spectral model are illustrative and are not derived from a canonical operator realization. They serve only to demonstrate the qualitative behavior predicted by the framework.

\subsubsection{Spectral Density and Critical Points}

For a surface defect, one may consider the effective interface spectral density $\rho(\lambda)$, which heuristically measures how many interface modes contribute to each spectral value $\lambda$ under a uniform distribution of parameters.

For the linear coupling example above, the mapping $(x,y) \mapsto \lambda$ is surjective onto $[1,6]$, and the effective spectral density is constant (no singularities) under the uniform measure. However, for more general coupling functions, $\rho(\lambda)$ may develop critical-point accumulations of spectral density at values where the gradient of $\lambda$ vanishes.

For example, consider a coupling with a saddle point on $I_\Sigma = [-1,1] \times [-1,1]$:
\[
\lambda(x,y) = x^2 - y^2.
\]
The range is $[-1, 1]$. Near $\lambda = 0$, the effective spectral density may exhibit enhanced spectral accumulation analogous to saddle-type van Hove behavior. Alternatively, consider the quadratic coupling $\lambda(x,y) = x^2 + y^2$ on the same domain; the range is $[0, 2]$, with a minimum at $\lambda = 0$ where the effective spectral density is finite but may have a nontrivial profile (a van Hove singularity at the minimum).

Thus, surface defects can produce spectral features more complex than simple bands: critical-point accumulations of spectral density, regions of nearly constant effective spectral density (flat-band-type structures), and multiple bands if $\lambda$ is matrix-valued.

\subsubsection{Possible Topological Structures}

When $I_\Sigma$ is a closed surface (e.g., a sphere $S^2$ or a torus $T^2$) and the coupling tensor is matrix-valued, the associated interface eigenmode families may admit topological invariants under suitable smoothness and bundle assumptions. These could include Chern numbers for maps from $I_\Sigma$ to the space of projection operators, winding numbers for the determinant of $\tau_{(x,y)}$, or Berry-type phase integrals over the surface.

For example, consider a $2 \times 2$ coupling matrix on $I_\Sigma = S^2$:
\[
\tau_{(x,y)} = \vec{n}(x,y) \cdot \vec{\sigma},
\]
where $\vec{\sigma}$ are Pauli matrices and $\vec{n}: S^2 \to S^2$ is a map of degree $d$. The spectral region is $\mathcal{L}_{I_\Sigma} = [-1, 1]$ (the full interval), but the eigenvalue set does not directly detect the topology. However, under suitable spectral separation and smoothness assumptions (e.g., a gap separating the eigenvalues $\pm 1$), the associated eigenbundle over $S^2$ may carry Chern number $d$. This suggests that a more refined invariant (e.g., a K-theoretic classification) could detect topology even when the spectral support is an interval. Such refinements lie beyond the present support-level theory.

\subsubsection{Spectral Gap Phenomenon and Emergent Geometry}

A particularly important situation occurs when the bulk strata possess spectral gaps:
\[
\sigma_{\mathrm{loc}}^1 \cap (a, b) = \emptyset, \qquad
\sigma_{\mathrm{loc}}^2 \cap (a, b) = \emptyset,
\]
while the interaction residue satisfies
\[
\Sigma^{\mathrm{res}} \cap (a, b) \neq \emptyset.
\]

In this regime, the surface interface may generate a higher-dimensional interface spectral regime inside the bulk spectral gap. Thus, the interaction geometry can create entirely new global spectral phases not visible locally. The residue therefore records genuinely global spectral structure invisible from the local strata individually.

\subsubsection{Dimensional Hierarchy}

The dimensional hierarchy of interface-localized defects becomes apparent:

\[
\boxed{
\begin{aligned}
\text{point interface} &\Longrightarrow \text{isolated eigenvalue},\\
\text{line interface} &\Longrightarrow \text{band-like spectral structure (in effective models)},\\
\text{surface interface} &\Longrightarrow \text{continuously parameterized spectral regime (in effective models)}.
\end{aligned}
}
\]

This hierarchy suggests that higher-dimensional interfaces generally admit richer and more continuously parameterized spectral defect structures. More generally, a $d$-dimensional interface may produce effectively $d$-parameter spectral families (conjectural), and singular interfaces (non-integer dimension) may generate highly singular or fractal-type spectral structures, though this remains conjectural.

\subsubsection{Vanishing Criterion and Rigidity}

If $\tau_{(x,y)} = 0$ for all $(x,y) \in I_\Sigma$, then by Proposition~\ref{prop:vanishing-coupling} we have $\Sigma^{\mathrm{res}} = \emptyset$, and therefore
\[
\sigma_P(A) = \sigma_{\mathrm{loc}}^1 \cup \sigma_{\mathrm{loc}}^2.
\]

Thus, the surface defect disappears completely when the interface interaction vanishes.

\subsubsection{Deformation Stability}

Consider a continuous deformation of the coupling tensor family:
\[
\tau_{(x,y)}(u) = u \cdot \tau_{(x,y)}^{(0)}, \quad u \in [0,1],
\]
where $\tau_{(x,y)}^{(0)}$ is a fixed non-zero coupling and $\tau_{(x,y)}(0) = 0$ for all $(x,y)$.

By Theorem~\ref{thm:deformation-stability}, at $u = 0$ we have $\Sigma^{\mathrm{res}}(0) = \emptyset$ (rigidity regime); for $u > 0$, the interface spectral region varies continuously with $u$; and as $u \to 0^+$, the region may contract toward a degenerate limit (e.g., collapsing to a point or merging into the local spectrum), illustrating a continuous transition from a regime with no interface states to a regime with interface-localized spectral structure. The effective spectral density $\rho_u(\lambda)$ would evolve continuously, with critical-point accumulations moving and potentially appearing or disappearing at critical $u$ values.

\subsubsection{Classical Analogies}

This phenomenon is analogous to surface states in topological phases (two-dimensional electron gas with Dirac dispersion at the surface of topological insulators), interface superconductivity (enhanced superconductivity at the interface between two materials), membrane-localized field modes (field theories confined to a two-dimensional defect), boundary spectral densities in quantum systems (spectral contributions from the boundary), and van Hove singularities in density of states (critical points in the dispersion relation of 2D systems).

\subsubsection{Geometric Interpretation}

From a geometric perspective, the point defect may be viewed heuristically
as analogous to curvature concentration at a point.

Table~\ref{tab:surface-defect-summary} summarizes important features for the surface defect.

\begin{table}[htbp]
\centering
\caption{Surface defect summary}
\label{tab:surface-defect-summary}
\begin{tabular}{|p{0.35\columnwidth}|p{0.55\columnwidth}|}
\hline
\textbf{Feature} & \textbf{Value for Surface Defect} \\
\hline
Interface dimension & $\dim I = 2$ \\
\hline
Spectral signature & Continuously parameterized spectral structure \\
& (in effective models) \\
\hline
Localization support & Two-dimensional surface $\Sigma$ \\
\hline
Local spectrum & $\sigma_{\mathrm{loc}}^1 \cup \sigma_{\mathrm{loc}}^2$ (discrete, for simplicity) \\
\hline
Residue & $\Sigma^{\mathrm{res}} = \mathcal{L}_{I_\Sigma}$ \\
\hline
Effective spectral density & $\rho(\lambda)$ may have critical-point accumulations \\
& (heuristic) \\
\hline
Possible invariants & Chern numbers, Berry-type phase integrals \\
& (under suitable smoothness and spectral gap assumptions) \\
\hline
Vanishing condition & $\tau_{(x,y)} = 0 \;\forall (x,y) \Rightarrow \Sigma^{\mathrm{res}} = \emptyset$ \\
\hline
Deformation behavior & Continuous variation; may collapse to degenerate limit \\
& as $u \to 0^+$ \\
\hline
Analogy in curvature & Curvature concentration on a surface (domain wall) \\
& --- heuristic \\
\hline
Analogy in TQFT & Two-dimensional topological defect object \\
\hline
Analogy in condensed matter & Surface states, 2D electron gas at interface \\
\hline
\end{tabular}
\end{table}

\subsubsection{Connection to Higher Dimensions}

The pattern may extend to higher dimensions: a three-dimensional interface (volume defect) would produce a three-dimensionally parameterized spectral regime (conjectural); a general $d$-dimensional interface would produce effectively $d$-parameter spectral families (conjectural); and fractal interfaces (non-integer dimension) may generate highly singular or fractal-type spectral structures (singular defect type). This dimensional hierarchy is one of the key predictions of the Spectral Defect Classification (Theorem~\ref{thm:defect-classification}).

\subsubsection{Conclusion}

The surface-defect example suggests that the residue geometry developed in the present work may be fundamentally geometric rather than merely algebraic:

\[
\boxed{\text{Surface interface} \quad\Longrightarrow\quad \text{surface-localized spectral regime (in effective models)}.}
\]

From the viewpoint of spectral defect geometry, local strata define bulk spectral phases, the surface interface generates a higher-dimensional interaction layer, and the residue records the resulting emergent spectral geometry (continuously parameterized families, critical-point accumulations, possible topological structures under suitable assumptions).

Thus, the present framework naturally suggests geometric spectral phases analogous to those appearing in condensed-matter interface physics, topological boundary theories, microlocal singularity propagation, and stratified geometric field theories. The interaction residue is shaped directly by interface geometry, including its dimension, topology, and coupling structure.

\subsection{Explicit Block Operator Matrix}
\label{subsec:example-block-operator}

We now present an explicit operator-level computation illustrating the
interaction residue and its localization mechanism.

Consider the block operator matrix
\[
H
=
\begin{pmatrix}
H_1 & \tau \\
\tau^* & H_2
\end{pmatrix},
\]
acting on
\[
\mathbb C^2 \oplus \mathbb C^2,
\]
where
\[
H_1
=
\operatorname{diag}(1,4),
\qquad
H_2
=
\operatorname{diag}(2,5),
\]
and
\[
\tau
=
\varepsilon I_2,
\qquad
\varepsilon>0.
\]

The operators
\[
H_1
\]
and
\[
H_2
\]
represent two local spectral sectors, while
\[
\tau
\]
acts as an interface coupling tensor generating interaction between the
two strata.

The uncoupled local spectral supports are
\[
\sigma_{\mathrm{loc}}^{(1)}
=
\{1,4\},
\qquad
\sigma_{\mathrm{loc}}^{(2)}
=
\{2,5\}.
\]

When
\[
\varepsilon=0,
\]
the global spectrum is simply
\[
\sigma(H)
=
\{1,2,4,5\},
\]
and no interaction residue occurs.

For
\[
\varepsilon\neq0,
\]
the coupling produces spectral splitting.
Since the matrix decomposes into two independent coupled blocks,
the eigenvalues are obtained explicitly from
\[
\begin{pmatrix}
1 & \varepsilon \\
\varepsilon & 2
\end{pmatrix}
\]
and
\[
\begin{pmatrix}
4 & \varepsilon \\
\varepsilon & 5
\end{pmatrix}.
\]

The corresponding eigenvalues are
\[
\lambda_{1,\pm}
=
\frac{3\pm\sqrt{1+4\varepsilon^2}}{2},
\]
and
\[
\lambda_{2,\pm}
=
\frac{9\pm\sqrt{1+4\varepsilon^2}}{2}.
\]

Hence
\[
\sigma(H)
=
\left\{
\frac{3\pm\sqrt{1+4\varepsilon^2}}{2},
\,
\frac{9\pm\sqrt{1+4\varepsilon^2}}{2}
\right\}.
\]

The interaction residue is therefore
\[
\Sigma^{\mathrm{res}}
=
\sigma(H)
\setminus
\{1,2,4,5\}.
\]

For small but nonzero
\[
\varepsilon,
\]
the coupling shifts the eigenvalues away from the uncoupled local
spectral sectors, generating localized spectral defects associated with
the interface tensor
\[
\tau.
\]

\medskip
\noindent\textbf{Summary of numerical values.}
For a concrete numerical illustration, take $\varepsilon = 0.5$:
\[
\lambda_{1,\pm} = \frac{3 \pm \sqrt{1+1}}{2} = \frac{3 \pm \sqrt{2}}{2} \approx \{0.7929,\; 2.2071\},
\]
\[
\lambda_{2,\pm} = \frac{9 \pm \sqrt{1+1}}{2} = \frac{9 \pm \sqrt{2}}{2} \approx \{3.7929,\; 5.2071\}.
\]
Thus $\sigma(H) \approx \{0.7929,\; 2.2071,\; 3.7929,\; 5.2071\}$ and
$\Sigma^{\mathrm{res}} = \sigma(H)$ (since none of these values belong to $\{1,2,4,5\}$).

\medskip
\noindent\textbf{Connection to transform-sensitive spectral geometry.}
This example also connects naturally with transform-sensitive spectral
geometry. Under a change of transform basis
\[
\Phi,
\]
the effective coupling structure
\[
\widehat{\tau}_{\Phi}
=
\Phi^{-1}\tau\Phi
\]
may become more localized or more distributed depending on the chosen
transform. Consequently, different transforms may produce different
interaction-residue geometries, linking the present framework with the
transform-sensitive spectral phenomena studied earlier for DFT, DCT,
and optimized transform representations.

\medskip
\noindent\textbf{Summary table.}

\begin{table}[htbp]
\centering
\caption{Explicit block operator summary}
\label{tab:block-operator-summary}
\begin{tabular}{|l|l|}
\hline
\textbf{Feature} & \textbf{Value} \\
\hline
Local spectra & $\sigma_{\mathrm{loc}}^{(1)} = \{1,4\}$, $\sigma_{\mathrm{loc}}^{(2)} = \{2,5\}$ \\
\hline
Coupling tensor & $\tau = \varepsilon I_2$, $\varepsilon > 0$ \\
\hline
Global spectrum ($\varepsilon = 0.5$) & $\{0.7929,\; 2.2071,\; 3.7929,\; 5.2071\}$ \\
\hline
Interaction residue & $\Sigma^{\mathrm{res}} = \sigma(H) \setminus \{1,2,4,5\}$ \\
\hline
Vanishing residue case & $\varepsilon = 0 \Rightarrow \Sigma^{\mathrm{res}} = \emptyset$ \\
\hline
Transform sensitivity & $\widehat{\tau}_{\Phi} = \Phi^{-1}\tau\Phi$ changes residue geometry \\
\hline
\end{tabular}
\end{table}

\medskip
\noindent\textbf{Conclusion.}
This example illustrates concretely that:
\begin{itemize}
    \item local spectral sectors alone do not determine the global
    spectrum,
    \item interface coupling generates new residual spectral
    contributions,
    \item the interaction residue measures the spectral effect of
    operadic interaction,
    \item transform choices affect the observed residue geometry.
\end{itemize}

\subsection{Nilpotent Defect: Jordan Block Contributions}
\label{subsec:example-nilpotent-defect}

We illustrate the emergence of nilpotent spectral defects from a simple
operadic coupling system.

Consider a stratified operad with two colors
\[
x, y,
\]
belonging to two distinct strata, together with a single admissible interface operation
\[
\theta: x \to y.
\]

Let the associated algebra assign
\[
A_x = \mathbb{C}^2, \qquad A_y = \mathbb{C}^2,
\]
and define the interface coupling tensor
\[
\tau_\theta: A_x \to A_y
\]
by the Jordan block matrix
\[
\tau_\theta = J_2(\lambda) = \begin{pmatrix}
\lambda & 1 \\
0 & \lambda
\end{pmatrix}, \qquad \lambda \in \mathbb{C}.
\]

The operator $\tau_\theta$ is non-semisimple and admits the decomposition
\[
\tau_\theta = \lambda I + N,
\]
where
\[
N = \begin{pmatrix}
0 & 1 \\
0 & 0
\end{pmatrix}
\]
satisfies $N^2 = 0$ and $N \neq 0$.

\medskip
\noindent\textbf{Connection to the residue framework.}
Under the Interface Localization Theorem (Theorem~\ref{thm:interface-localization}),
the interaction residue decomposes as $\Sigma^{\mathrm{res}} = \bigsqcup_I \mathcal{L}_I$.
For the interface $I$ corresponding to $\theta$, the localized defect $\mathcal{L}_I$
carries both the eigenvalue data and the nilpotent structure described below.
The nilpotent component $N$ generates generalized eigenvectors and produces a localized
non-semisimple spectral contribution at the interface $\theta$.

\medskip
\noindent\textbf{Key conceptual point:}
The nilpotent defect does \textbf{not} create new eigenvalues.
Rather, it enriches the local algebraic structure associated with the existing spectral point $\lambda$.
The spectral support (the set of eigenvalues) is $\{\lambda\}$ — unchanged from the semisimple case —
but the finer algebraic data (Jordan block size, nilpotency depth) is new.

\medskip
\noindent\textbf{Resolvent and functional calculus.}
The resolvent operator has the explicit form
\[
(zI - \tau_\theta)^{-1} = \frac{1}{z - \lambda} I + \frac{1}{(z - \lambda)^2} N,
\]
showing the appearance of a higher-order pole (double pole) generated by the nilpotent sector.
For any analytic function $f$,
\[
f(\tau_\theta) = f(\lambda) I + f'(\lambda) N,
\]
where the derivative term $f'(\lambda) N$ is invisible to the ordinary spectrum
but essential for the functional calculus.

Thus the spectral singularity contains two distinct components:
\begin{itemize}
    \item an eigenvalue contribution located at $\lambda$,
    \item a nilpotent contribution encoded by the generalized eigenspace structure (Jordan chain $v_1, v_2$ with $\tau_\theta v_1 = \lambda v_1$, $\tau_\theta v_2 = \lambda v_2 + v_1$).
\end{itemize}

\medskip
\noindent\textbf{Exceptional points and perturbative splitting.}
A nilpotent defect is spectrally fragile. Perturb the coupling tensor:
\[
\tau_\theta(\varepsilon) = \begin{pmatrix} \lambda & 1 \\ \varepsilon & \lambda \end{pmatrix}, \quad \varepsilon > 0 \text{ small}.
\]
The eigenvalues become $\lambda_\pm = \lambda \pm \sqrt{\varepsilon}$, splitting into two distinct point defects.
At $\varepsilon = 0$, the defect is nilpotent; for $\varepsilon > 0$, it splits.
This critical transition (an exceptional point of order $2$) is captured by Theorem~\ref{thm:deformation-stability}.

\medskip
\noindent\textbf{Vanishing criterion.}
If $\tau_\theta$ is diagonalizable (no Jordan block), the nilpotent defect vanishes.
If $\tau_\theta = 0$, then $\Sigma^{\mathrm{res}} = \emptyset$ by Proposition~\ref{prop:vanishing-coupling}.

\medskip
\noindent\textbf{Summary table.}

\begin{table}[htbp]
\centering
\caption{Nilpotent defect summary (Jordan block of size $m=2$)}
\label{tab:nilpotent-defect-summary}
\begin{tabular}{|l|l|}
\hline
\textbf{Feature} & \textbf{Value} \\
\hline
Operadic setup & Two colors $x,y$, interface $\theta: x \to y$ \\
\hline
Coupling tensor & $\tau_\theta = \begin{pmatrix} \lambda & 1 \\ 0 & \lambda \end{pmatrix}$ \\
\hline
Spectral support & $\{\lambda\}$ \\
\hline
Algebraic multiplicity & $2$ \\
\hline
Geometric multiplicity & $1$ \\
\hline
Nilpotent part & $N$, $N^2 = 0$, $N \neq 0$ \\
\hline
Resolvent poles & Double pole at $\lambda$ \\
\hline
Functional calculus & $f(\tau_\theta) = f(\lambda)I + f'(\lambda)N$ \\
\hline
Perturbation response & Splits as $\lambda \pm \sqrt{\varepsilon}$ (exceptional point) \\
\hline
Vanishing condition & $\tau_\theta$ diagonalizable $\Rightarrow$ nilpotent defect absent \\
\hline
\end{tabular}
\end{table}

\medskip
\noindent\textbf{Conclusion.}
This example illustrates that interaction residues are sensitive not only to spectral location but also to hidden Jordan structure, generalized eigenvectors, and perturbative splitting behavior near defective spectral sectors. From the viewpoint of the present framework, the nilpotent term represents a localized spectral defect generated by non-semisimple operadic coupling:

\[
\boxed{\text{The residue detects algebraic structure beyond eigenvalues alone}.}
\] 

\section{Conclusion}
\label{sec:conclusion}

\begin{center}
\boxed{
\begin{minipage}{0.9\textwidth}
\centering
\textbf{The framework's most novel contribution is the inclusion of nilpotent defects:}
\\
\textbf{algebraic structure invisible to spectral support alone but essential}
\\
\textbf{for functional calculus and deformation theory.}
\end{minipage}
}
\end{center}

The interaction residue $\Sigma^{\mathrm{res}}$ provides a complete invariant for the failure of local-to-global spectral factorization in stratified operadic systems. It decomposes canonically into interface-localized defects $\mathcal{L}_I$, each classified by geometric data (interface dimension) and algebraic depth (nilpotency index). Unlike classical spectral theory, which focuses on eigenvalues alone, our framework places Jordan block structure on an equal footing with semisimple contributions.

\subsection{Summary of Contributions}

The principal contributions of this paper are as follows:

\begin{enumerate}
    \item \textbf{Stratified Base Change Decomposition (Theorem~\ref{thm:stratified-base-change}):}
    \[
    \operatorname{supp}(\sigma(F_*(A))) = \bigcup_{S \in \mathrm{Str}(P)} \operatorname{supp}(\sigma(F_S(A_S))) \;\cup\; \Sigma^{\mathrm{res}}.
    \]
    The global spectrum decomposes into local spectral sectors together with a canonical interaction residue that measures the obstruction to exact factorization.

    \item \textbf{Interface Localization (Theorem~\ref{thm:interface-localization}):}
    \[
    \Sigma^{\mathrm{res}} = \bigcup_{I \in \mathcal{I}(P)} \mathcal{L}_I.
    \]
    Under the residual coverage condition, the residue localizes on admissible interfaces, transforming a global obstruction into a geometric defect object.

    \item \textbf{Spectral Defect Classification (Theorem~\ref{thm:defect-classification}):}
    A two-dimensional taxonomy organizes defects by interface geometry (point, line, surface, fractal) and algebraic depth (nilpotency index, Jordan block size). Point, line, surface, and nilpotent defects generate distinct forms of localized spectral behavior.

    \item \textbf{Refinement Functoriality (Theorem~\ref{thm:refinement-functoriality}):}
    The residue is preserved under admissible stratification refinement up to canonical transport, showing that localized spectral defects are compatible with hierarchical decomposition of the operadic system.

    \item \textbf{Deformation Stability (Theorem~\ref{thm:deformation-stability}):}
    Under gap-preserving admissible deformations, the residue varies continuously. At critical parameters where spectral gaps close or Jordan structure changes (exceptional points), the residue may undergo discontinuous transitions, signaling phase transitions in the spectral defect geometry.

    \item \textbf{Nilpotent Sector (Example~\ref{subsec:example-nilpotent-defect}):}
    Jordan block contributions are invisible to spectral support alone but essential for functional calculus, resolvent structure (higher-order poles), and perturbative splitting ($\varepsilon^{1/m}$ Puiseux series). The nilpotent sector provides a rigorous bridge between operadic spectral geometry and classical operator theory.
\end{enumerate}

\subsection{Future Directions}

Several natural extensions of this work remain open, including:
\begin{itemize}
    \item an operadic cohomology theory for $\Sigma^{\mathrm{res}}$ with a Bianchi-type identity $\delta(\Sigma^{\mathrm{res}}) = 0$,
    \item a derived categorical framework for stratified operadic algebras,
    \item a K-theoretic classification of spectral defects,
    \item transform-sensitive interaction geometry (DFT, DCT, optimized transforms),
    \item quantitative perturbation estimates for nilpotent defects,
    \item applications to non-Hermitian topological phases where exceptional points play a central role.
\end{itemize}

\subsection{Final Slogan}

\[
\boxed{\text{Spectral geometry must include nilpotent interaction structure, not merely spectra.}}
\]

\end{document}